\title[Uhlenbeck's  decomposition]{Uhlenbeck's decomposition in Sobolev and Morrey-Sobolev spaces}
\author{Pawe\l{} Goldstein}
\address{Pawe\l{} Goldstein\newline \indent Faculty of Mathematics, Informatics and Mechanics\newline \indent University of Warsaw}
\author{Anna Zatorska-Goldstein}
\address{Anna Zatorska-Goldstein\newline \indent Faculty of Mathematics, Informatics and Mechanics\newline \indent University of Warsaw}
\thanks{The research has been supported by NCN grant SONATA BIS no.~2012/05/E/ST1/03232 and by the Foundation for Polish Science grant no. POMOST BIS/2012-6/3.}
\newtheorem{theorem}{Theorem}[section]
\newtheorem{lemma}[theorem]{Lemma}
\newtheorem{corollary}[theorem]{Corollary}
\newtheorem{proposition}[theorem]{Proposition}
\newtheorem{definition}[theorem]{Definition}
\newtheorem*{theorem*}{Theorem}
\theoremstyle{definition}
\newtheorem{remark}[theorem]{Remark}
\newtheorem*{remarknn}{Remark}
\newtheorem*{question}{Question}
\numberwithin{equation}{section}
\newcommand{\vertiii}[1]{{\left\vert\kern-0.25ex\left\vert\kern-0.25ex\left\vert #1
    \right\vert\kern-0.25ex\right\vert\kern-0.25ex\right\vert}}
\def\mvint_#1{\mathchoice
          {\mathop{\vrule width 6pt height 3 pt depth -2.5pt
                  \kern -9pt \intop}\nolimits_{\kern -3pt #1}}%
          {\mathop{\vrule width 5pt height 3 pt depth -2.6pt
                  \kern -6pt \intop}\nolimits_{#1}}%
          {\mathop{\vrule width 5pt height 3 pt depth -2.6pt
                  \kern -6pt \intop}\nolimits_{#1}}%
          {\mathop{\vrule width 5pt height 3 pt depth -2.6pt
                  \kern -6pt \intop}\nolimits_{#1}}}
\newcommand{\R}{\mathbb{R}}
\newcommand{\bb}{\mathbb{B}}
\newcommand{\bs}{\mathcal{B}}
\newcommand{\id}{\mathrm{Id}}
\begin{document}
\begin{abstract}
We present a self-contained proof of Rivi\`ere's theorem on the existence of Uhlenbeck's decomposition for  $\Omega\in L^p(\bb^n,so(m)\otimes\Lambda^1\R^n)$ for $p\in (1,n)$, with Sobolev type estimates in the case $p \in[n/2,n)$ and Sobolev-Morrey type estimates in the case $p\in (1,n/2)$.  We also prove an analogous theorem in the case when $\Omega\in L^p( \bb^n, TCO_{+}(m) \otimes \Lambda^1\R^n)$, which corresponds to Uhlenbeck's decomposition with conformal gauge group.
\end{abstract}
\maketitle

\section{Introduction}
Throughout the paper, $n \geq 2$.

 In 2006 Tristan Rivi\`ere published a solution to the so-called Heinz-Hilde\-brandt conjecture on regularity of solutions to conformally invariant nonlinear systems of partial differential equations in dimension 2 \cite{Riviere1}. The key tool he used was a theorem due to Karen Uhlenbeck, on the existence of the so-called Coulomb gauges, in which the connection of a line bundle takes particularly simple form; we quote the theorem below in somewhat imprecise terms, to avoid unnecessary technicalities.
\begin{theorem}[Uhlenbeck, \cite{Uhlenbeck}] Let $\eta$ be a vector bundle over the unit ball $\bb^n$, $n\geq 2$, with connection form $\Omega\in W^{1,p}$, $p\in [n/2,n)$. If the curvature field, $F(\Omega)=\Omega\wedge\Omega+d\Omega$, has sufficiently small $L^{n/2}$ norm, then $\Omega$ is gauge-equivalent to a connection $\tilde{\Omega}$ which is co-closed (i.e. $d\ast\tilde{\Omega}=0$), with estimates of the gauge and of $\tilde{\Omega}$ given in terms of $\|F(\Omega)\|_{L^p}$.
\end{theorem}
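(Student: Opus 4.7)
The plan is to prove the theorem by the method of continuity. For a small $\varepsilon>0$ to be fixed, let $\mathcal{V}_\varepsilon \subset W^{1,p}(\bb^n, so(m)\otimes\Lambda^1\R^n)$ consist of connection forms with $\|F(\Omega)\|_{L^{n/2}}<\varepsilon$, and let $\mathcal{U}\subset \mathcal{V}_\varepsilon$ be the subset of those $\Omega$ for which there exists $g\in W^{2,p}(\bb^n, SO(m))$ such that $\tilde\Omega := g^{-1}\Omega g + g^{-1}dg$ satisfies $d\ast\tilde\Omega=0$ in $\bb^n$ and the boundary condition $\iota_\nu\tilde\Omega|_{\partial\bb^n}=0$, together with the quantitative bound $\|\tilde\Omega\|_{W^{1,p}}+\|dg\|_{W^{1,p}}\leq C\|F(\Omega)\|_{L^p}$. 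Since $\mathcal{V}_\varepsilon$ is path-connected via the linear homotopy $t\mapsto t\Omega$, it suffices to show that $\mathcal{U}$ is nonempty, relatively open, and relatively closed in $\mathcal{V}_\varepsilon$; nonemptiness is immediate, taking $\Omega=0$ and $g=\id$.

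For openness, I would fix $\Omega_0\in\mathcal{U}$ with gauge $g_0$ and parametrize nearby gauges as $g=g_0 e^u$ with $u:\bb^n\to so(m)$. Setting $\tilde\Omega(\Omega,u) := (g_0 e^u)^{-1}\Omega(g_0 e^u) + (g_0 e^u)^{-1}d(g_0 e^u)$, the map
\[
\Phi(\Omega,u) := \bigl(\, d\ast\tilde\Omega(\Omega,u),\ \iota_\nu\tilde\Omega(\Omega,u)|_{\partial\bb^n}\,\bigr)
\]
is smooth between the relevant Sobolev spaces; its linearization in $u$ at $(\Omega_0,0)$ is, modulo lower-order terms, the Laplacian with Neumann-type boundary condition, which is an isomorphism on the subspace complementary to constant $so(m)$-valued functions. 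The implicit function theorem then produces a unique nearby gauge for every $\Omega$ sufficiently close to $\Omega_0$ in $W^{1,p}$, with the quantitative estimate preserved by continuity.

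For closedness, given $\Omega_k\to\Omega$ in $W^{1,p}$ with $\Omega_k\in\mathcal{U}$ and associated $g_k, \tilde\Omega_k$, the uniform bound gives weak $W^{1,p}$-compactness of $\tilde\Omega_k$, and combined with $dg_k = g_k\tilde\Omega_k - \Omega_k g_k$ together with a bootstrap using $d\ast\tilde\Omega_k=0$, it yields weak $W^{2,p}$-compactness of $g_k$ up to subsequence. The compact Sobolev embedding ensures that the weak limits satisfy both the Coulomb condition and the boundary condition, so $\Omega\in\mathcal{U}$.

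The main obstacle is establishing the a priori estimate $\|\tilde\Omega\|_{W^{1,p}}\leq C\|F(\Omega)\|_{L^p}$ on which both openness and closedness rest. Combining $d\tilde\Omega + \tilde\Omega\wedge\tilde\Omega = F(\Omega)$ with $d\ast\tilde\Omega=0$ produces the Hodge-type elliptic system $(d+d^\ast)\tilde\Omega = F(\Omega)-\tilde\Omega\wedge\tilde\Omega$, which with the boundary condition yields
\[
\|\tilde\Omega\|_{W^{1,p}} \leq C\bigl(\|F(\Omega)\|_{L^p} + \|\tilde\Omega\wedge\tilde\Omega\|_{L^p}\bigr).
\]
For $p\in[n/2,n)$ the Sobolev embedding $W^{1,p}\hookrightarrow L^{np/(n-p)}$ and Hölder's inequality give $\|\tilde\Omega\wedge\tilde\Omega\|_{L^p}\leq \|\tilde\Omega\|_{L^n}\|\tilde\Omega\|_{L^{np/(n-p)}}$, and the borderline analogue of the elliptic estimate at exponent $n/2$ converts the smallness of $\|F(\Omega)\|_{L^{n/2}}$ into smallness of $\|\tilde\Omega\|_{L^n}$. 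This absorption of the quadratic term into the left-hand side is the crux of the argument and the precise reason the smallness hypothesis on the curvature is imposed at the critical exponent $n/2$.
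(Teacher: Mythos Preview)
First, note that the paper does not actually prove this statement of Uhlenbeck; it is quoted as background from \cite{Uhlenbeck}. The paper's own work concerns Rivi\`ere's reformulation (Theorems~\ref{thm main}, \ref{u-m}, \ref{thm:conformal}), where the smallness hypothesis is placed on $\|\Omega\|_{L^n}$ rather than on $\|F(\Omega)\|_{L^{n/2}}$.

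Your overall architecture---continuity method, implicit function theorem for openness, weak compactness for closedness, and an absorption argument for the a priori bound---matches both Uhlenbeck's original proof and the paper's treatment of the Rivi\`ere version. However, there is a genuine gap in your path-connectedness step. You claim $\mathcal{V}_\varepsilon$ is path-connected via $t\mapsto t\Omega$, but the curvature is \emph{not} homogeneous in $\Omega$: one has
\[
F(t\Omega)=t\,F(\Omega)+(t^2-t)\,\Omega\wedge\Omega,
\]
and nothing prevents $\|\Omega\wedge\Omega\|_{L^{n/2}}$ from being large even when $\|F(\Omega)\|_{L^{n/2}}$ is small (the two summands $d\Omega$ and $\Omega\wedge\Omega$ may cancel in $F(\Omega)$). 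Hence the linear homotopy need not remain inside $\mathcal{V}_\varepsilon$, and your connectedness argument collapses. This is precisely the reason the paper's continuity argument works cleanly only for Rivi\`ere's version: there $V_\varepsilon=\{\|\Omega\|_{L^n}<\varepsilon\}$ is genuinely star-shaped in $W^{1,p}$. Uhlenbeck's original proof avoids the problem by running the continuity in a different parameter---effectively in the radius of the ball, via the dilations $\Omega_r(x)=r\,\Omega(rx)$, for which $\|F(\Omega_r)\|_{L^{n/2}(\bb^n)}=\|F(\Omega)\|_{L^{n/2}(\bb_r^n)}\to 0$ as $r\to 0$.

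Two secondary differences are worth recording. You impose the Neumann-type condition $\iota_\nu\tilde\Omega|_{\partial\bb^n}=0$ (as in Uhlenbeck), which forces you to work modulo constant gauges when inverting the linearization; the paper instead imposes Dirichlet data $P=\id$, $\xi=0$ on $\partial\bb^n$, so the linearized operator $H(\psi)=\Delta\psi+\ast[d\zeta,d\psi]$ on $W^{2,p}\cap W^{1,p}_o$ is an honest isomorphism (Lemma~\ref{lemma:existence}). And your a priori estimate via the Hodge system $(d+d^\ast)\tilde\Omega=F(\Omega)-\tilde\Omega\wedge\tilde\Omega$ is the classical route; the paper's analogue (Lemma~\ref{a priori}) instead estimates $d\xi$ and $dP$ separately, using the Coifman--Lions--Meyer--Semmes div--curl inequality on the term $dP^{-1}\wedge dP$ rather than a direct Gaffney-type bound on the gauged connection.
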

This theorem was later generalized, to allow applications in higher dimensions ($n>4$), to connections with smallness condition on the curvature given in Morrey norms (\cite{MeyerRiviere},\cite{TaoTian}).

We should mention that Coulomb gauges appeared in the theory of geometrically motivated systems of PDE in important papers of F. H\'elein on regularity of harmonic mappings between manifolds (\cite{Helein1,Helein2}).

The power of Rivi\`ere's idea was in the fact that he used Uhlenbeck's theorem to antisymmetric differential forms, which \textit{a priori} were not interpreted as connection forms, even if the problem had clear geometric motivation. Moreover, he reformulated the theorem in a language more suited for PDE applications.
Simplifying, Rivi\`ere's theorem (see \cite[Lemma A.3]{Riviere1}) says that any antisymmetric matrix $\Omega$ of $1$-forms on a ball
$$
\Omega : \bb^n \to so(m) \otimes \Lambda^1 \R^n
$$
with \textit{sufficiently small} norm can be transformed by an orthogonal change of coordinates (gauge transformation)
$$
P: \bb^n \to SO(m)
$$
to an antisymmetric matrix of co-closed forms (up to a rather regular term)
$$
P^{-1}dP+P^{-1}\Omega P = \ast d\xi,
$$
$$
\text{i.e. } \Omega = P (\ast d\xi) P^{-1} - dP \,P^{-1} .
$$
Here $\xi$ is an antisymmetric matrix of $(n-2)$-forms:
$$
\xi: \bb^n \to so(m)\otimes \Lambda^{n-2}\R^n.
$$
Such a decomposition of $\Omega$ is often referred to as Uhlenbeck's decomposition.

One should note that, in contrast to Uhlenbeck's theorem, the smallness condition is imposed on $\Omega$, and not on $F(\Omega)$. This has the advantage of being a simpler and more natural expression in the scope of general PDE's, but, at the same time, it is less natural in the scope of gauge theory, since the norm of the curvature is gauge independent.

Starting with Rivi\`ere's result, Uhlenbeck's decomposition appeared in numerous papers on nonlinear PDE's and variational problems, each time adapted to a specific system, function spaces and dimensions, and with different smallness conditions:
\begin{itemize}
\item Rivi\`ere \cite{Riviere1}: $\Omega \in L^2$, $n=2$;
\item Rivi\`ere \& Struwe \cite{RiviereStruwe}: $\Omega \in L^{2,n-2}$, $n>2$;
\item Lamm \& Rivi\`ere \cite{LammRiviere}: $\Omega \in W^{1,2}$, $n=4$;
\item Meyer \& Rivi\`ere \cite{MeyerRiviere} and Tao \& Tian \cite{TaoTian}:
$$
\Omega \in W^{1,2} \cap L^{4,n-4}, \quad d \Omega \in L^{2,n-4} \qquad \text{for $n \geq 4$};
$$
\item M\"uller \& Schikorra \cite{MuellerSchikorra}: $\Omega \in W^{1,2}$, $n=2$;
\end{itemize}

All the proofs of the above results are, up to details, adaptations of the original approach of Uhlenbeck and most of them refer the reader for certain parts of the reasoning to the original paper \cite{Uhlenbeck}.  The latter, however, is written in the language of differential geometry (the result was used there in the context of the existence theory for Yang-Mills fields). Translating the results to Rivi\`ere's setting and filling all the sketched details was not trivial, which is probably why this extremely useful result went overlooked by the PDE community for over two decades.

\medskip

All the proofs naturally split into  two parts:
\begin{itemize}
\item proving the existence of the decomposition for any sufficiently small perturbation of a co-closed form $\ast d \zeta$, provided certain norm of $d\zeta$ is small;
\item proving that once we have $\Omega$, $P$ and $\xi$ which satisfy the equation
$$
P^{-1}dP+P^{-1}\Omega P = \ast d\xi
$$
and additionally certain norms of $\Omega$, $dP$ and $d\xi$ are sufficiently small, the presumed estimates hold (the norms of $P$ and $\xi$ are bounded in terms of the norm of $\Omega$).
\end{itemize}

In the results mentioned above, two strategies of proving the existence of decomposition of $\Omega$ were used. The original strategy used by Karen Uhlenbeck was to solve the equation for $P$
$$
\ast d \ast (P^{-1} dP + P^{-1}(\ast d \zeta + \lambda)P) = 0
$$
for a given perturbation $\lambda$ of some fixed co-closed form. To do this, we look for $P$ of the form $P=e^u$, add some boundary condition on $u$ (Neumann in the original result of Uhlenbeck, Cauchy in our proof) and define the nonlinear operator
$$
T(u,\lambda) = \ast d \ast (e^{-u} de^u + e^{-u}(\ast d \zeta + \lambda)e^u)
$$
acting on appropriate Banach spaces; in our case
$$
T(u,\lambda): \mathcal{B} \times W^{1,p} \to L^p
$$
where $\mathcal{B} = W^{2,p}\cap W^{1,p}_o$. One can apply then the Implicit Function Theorem to show that  $T(u,\lambda) = 0$ has a solution $u_\lambda$ continuously depending on $\lambda$.
To do this, one has to show that the linearization of  $T$ at $(u,\lambda) = (0,0)$ with respect to the first argument,
$$
H(u) = \Delta u + \ast [d \zeta, du],
$$
is an isomorphism $\mathcal{B} \to L^p$.
This strategy works in Sobolev spaces for ${p\in(n/2;n)}$, but it fails when $1<p < n/2$.

Another strategy is used by T. Tao and G. Tian in \cite{TaoTian}. Again, one looks for $P=e^u$ and assumes $u$ has zero boundary data. The equation
$$
\ast d \ast (P^{-1} dP + P^{-1}(\ast d \zeta + \lambda)P) = 0
$$
is transformed into the form
$$
\Delta u = \ast d \ast
\left( du - e^{-u} de^u - e^{-u}(\ast d \zeta + \lambda) e^u \right)=\ast d \ast F(u,\zeta,\lambda).
$$
Then, an iteration scheme is set to provide a solution:
\begin{equation*}
\left\{\begin{aligned}u^0 &= 0 \\
\Delta u^{k+1} &= \ast d \ast F(u^k,\zeta, \lambda).
\end{aligned}\right.
\end{equation*}
We use this strategy when $1<p < n/2$.

In 2009 A. Schikorra gave an alternative, variational proof of the existence of
Uhlenbeck's decomposition (\cite{armin}). His approach was inspired by a similar variational construction of a moving frame by F. H\'elein \cite{Helein2}. Schikorra's methods, however, provided the gauge transformation $P$ only in $W^{1,2}$, even for $\Omega \in L^p$ with $p>2$
(while Uhlenbeck's and Rivi\`ere's approach gave $P\in W^{1,p}$). On the other hand, his method was much simpler and allowed him to give alternative regularity
proofs for systems studied by Rivi\`ere \cite{Riviere1} and Rivi\`ere \& Struwe \cite{RiviereStruwe}.

Finally, one should mention the book of K. Wehrheim \cite{wehrheim}, who undertook the effort of clarifying and presenting in all detail the original result of K.~Uhlenbeck.
\medskip

The main result of the paper is a self-contained, complete proof of Rivi\`ere's theorem in the following settings.

First, with Sobolev type estimates,
\begin{theorem}\label{thm main}
Let $\frac{n}{2}\leq p<n$. There exists $\epsilon >0$ such that for any antisymmetric matrix $\Omega \in W^{1,p}(\bb^n, so(m)\otimes \Lambda^1\R^n)$  of $1$-differential forms on $\bb^n$ such that
$$
\|\Omega \|_{L^n}<\epsilon
$$
there exist $P\in W^{2,p}(\bb^n,SO(m))$, and $\xi \in W^{2,p}\cap W_0^{1,p}(\bb^n, so(m)\otimes \Lambda^{n-2}\R^n)$ satisfying the system
\begin{equation*}
\left\{
\begin{aligned}
P^{-1}dP+P^{-1}\Omega P&=\ast d\xi &&\text{on }\bb^n,\\
d\ast\xi&=0 &&\text{on }\bb^n,\\
\xi&=0 &&\text{on } \partial \bb^n, \\
P&=\mathrm{Id} &&\text{on } \partial \bb^n;
\end{aligned}
\right.
\end{equation*}
and such that
\begin{align*}
\|d\xi \|_{L^n}+\|dP\|_{L^n}&\leq C(n,m)\|\Omega \|_{L^n},\\
\|d\xi\|_{W^{1,p}}+\|dP\|_{W^{1,p}}&\leq C(n,m,p)\|\Omega \|_{W^{1,p}},\\
\text{ and, if }p>n/2,\qquad \|d\xi \|_{L^p}+\|dP\|_{L^p}&\leq C(n,m,p)\|\Omega \|_{L^p}.
\end{align*}
\end{theorem}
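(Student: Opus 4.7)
The plan is to follow the Uhlenbeck--Implicit Function Theorem strategy outlined in the introduction, which is valid throughout the range $p\in[n/2,n)$. The proof splits into a local existence step near a fixed co-closed reference form and a global open--closed continuation argument on the space of admissible $\Omega$'s.

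For the local step, I write $P=e^u$ with $u=0$ on $\partial\bb^n$ and recast $d\ast(P^{-1}dP+P^{-1}\Omega P)=0$ as
$$T(u,\lambda)=\ast d\ast\!\left(e^{-u}de^u+e^{-u}(\ast d\zeta+\lambda)e^u\right)=0,$$
where $\zeta$ is a fixed antisymmetric matrix of $(n-2)$-forms with $\|d\zeta\|_{L^n}$ small and $\lambda\in W^{1,p}$ is a small perturbation. With $\mathcal{B}=W^{2,p}\cap W^{1,p}_0$, the map $T:\mathcal{B}\times W^{1,p}\to L^p$ is $C^1$ by Sobolev multiplication estimates (cleanly for $p>n/2$ via $W^{2,p}\hookrightarrow L^\infty$, with more care needed at the endpoint $p=n/2$), and its linearization in $u$ at $(0,0)$ is
$$H(u)=\Delta u+\ast[d\zeta,du].$$
The crux is to show $H:\mathcal{B}\to L^p$ is an isomorphism when $\|d\zeta\|_{L^n}$ is small: Calder\'on--Zygmund gives $\Delta:\mathcal{B}\to L^p$ invertible, and the perturbation is controlled by H\"older and Sobolev,
$$\|\ast[d\zeta,du]\|_{L^p}\leq C\|d\zeta\|_{L^n}\|du\|_{L^{np/(n-p)}}\leq C'\|d\zeta\|_{L^n}\|u\|_{\mathcal{B}},$$
so a Neumann-series perturbation preserves invertibility. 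The IFT then yields $u=u(\zeta,\lambda)$ depending continuously on $(\zeta,\lambda)$ in a neighbourhood of $(0,0)$, and the corresponding $P=e^u$ together with the associated $\xi$ (recovered from the Poincar\'e lemma applied to the co-closed LHS of the gauge equation) solves the system.

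For the continuation, let $\mathcal{U}$ be the set of $\Omega\in W^{1,p}$ with $\|\Omega\|_{L^n}<\epsilon$ admitting the required $(P,\xi)$ with the estimates of the theorem. It contains $\Omega=0$ (trivially $P=\mathrm{Id}$, $\xi=0$); it is open, since given any $\Omega_0\in\mathcal{U}$ with decomposition $(P_0,\xi_0)$, a nearby $\Omega$ can be written as a small perturbation $\lambda=P_0^{-1}(\Omega-\Omega_0)P_0$ of $P_0(\ast d\xi_0)P_0^{-1}-dP_0 P_0^{-1}$, to which the local step applies with $\zeta=\xi_0$; and it is closed in the relative topology by the a priori estimates, weak compactness in $W^{2,p}$, and a.e.\ subsequential convergence preserving $SO(m)$- and $so(m)$-valuedness. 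Connectedness of the $L^n$-ball forces $\mathcal{U}$ to equal the whole ball. The a priori estimates themselves rest on two observations: (i) $P\in SO(m)$ gives $|dP|=|P^{-1}dP|\leq|\ast d\xi|+|\Omega|$ pointwise, reducing $dP$-estimates to $d\xi$-estimates; and (ii) differentiating the gauge equation and using $d^*\xi=0$ and $\xi|_{\partial\bb^n}=0$, $\xi$ satisfies a Hodge-Laplace equation of the schematic form $\Delta\xi=\pm\ast\left(d(P^{-1}\Omega P)-(P^{-1}dP)\wedge(P^{-1}dP)\right)$; Calder\'on--Zygmund in Riesz-transform form handles the derivative on the right-hand side, and absorption of the quadratic term using its own $L^n$-smallness yields $\|d\xi\|_{L^n}\leq C\|\Omega\|_{L^n}$. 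A standard bootstrap then delivers the $W^{1,p}$ and, for $p>n/2$, the $L^p$ estimates.

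The main obstacle is the isomorphism property of $H$, particularly at the endpoint $p=n/2$, where the relevant Sobolev embedding is critical and the commutator perturbation $\ast[d\zeta,du]$ is not compact relative to $\Delta$: one must genuinely exploit the $L^n$-smallness of $d\zeta$ for the Neumann-series argument. This same phenomenon forces the continuation to propagate $L^n$-control of $dP$ and $d\xi$ independently of $W^{1,p}$-control, explaining why the $L^n$-estimate appears as a separate conclusion of the theorem.
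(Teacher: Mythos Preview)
For $p\in(n/2,n)$ your outline matches the paper's: the same operator $T$, the same linearization $H$ made invertible by a Neumann series under $\|d\zeta\|_{L^n}$ small, and the same open--closed continuity method. The paper's a~priori estimates lean on the Coifman--Lions--Meyer--Semmes inequality for the quadratic term $dP^{-1}\wedge dP$, but your H\"older--Sobolev absorption also closes.

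The genuine gap is the endpoint $p=n/2$. You assert the IFT step ``is valid throughout $p\in[n/2,n)$'', needing only ``more care'' at the endpoint; the paper explicitly notes that the existence lemma \emph{fails} there, and this is not repairable within the IFT framework. The obstruction is not the linear isomorphism $H$ (your commutator bound works at $p=n/2$ too), but the $C^1$-smoothness of the full nonlinear operator $T$: since $W^{2,n/2}\not\hookrightarrow L^\infty$, the space $W^{2,n/2}$ is not an algebra, and the superposition $u\mapsto e^{-u}de^u$ is not Fr\'echet differentiable from $\mathcal{B}$ to $W^{1,p}$. Concretely, the second-order remainder in $e^{u+h}-e^u-D\exp(u)h$ produces terms of the type $h\cdot D^2h$, and $\|h\,D^2h\|_{L^{n/2}}$ cannot be controlled by $\|h\|_{W^{2,n/2}}^2$. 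The paper therefore handles $p=n/2$ by a separate density step: approximate $\Omega\in W^{1,n/2}$ by $\Omega_k\in W^{1,q}$ for some $q>n/2$, apply the already-proved subcritical case, and pass to the limit using the closedness argument together with the $p$-independent bound $\|d\xi\|_{L^n}+\|dP\|_{L^n}\leq C(n,m)\|\Omega\|_{L^n}$. Your argument should adopt this two-stage structure rather than attempt a direct IFT at the critical exponent.
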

Next, with Morrey-Sobolev type estimates,
\begin{theorem}\label{u-m}
Assume $1< p < n/2$. Let
$$
\Omega : \bb^n \to so(m)\otimes \Lambda^1\R^n
$$
be an antisymmetric matrix of $1$-differential forms on $\bb^n$. Assume
$$
\Omega \in L^{2p,n-2p} \quad \text{and} \quad d\Omega \in L^{p,n-2p}
$$
There exists $\epsilon >0$ such that if $\Omega$ satisfies the smallness condition
$$
\|\Omega \|_{L^{2p,n-2p}}<\epsilon
$$
then there exist $P\in W^{2,p}(\bb^n,SO(m))$ and $\xi \in W^{2,p}(\bb^n, so(m)\otimes \Lambda^{n-2}\R^n)$ satisfying the system
\begin{equation*}
\left\{
\begin{aligned}
P^{-1}dP+P^{-1}\Omega P&=\ast d\xi &&\text{on }\bb^n,\\
d\ast\xi&=0 &&\text{on }\bb^n,\\
\xi&=0 &&\text{on } \partial \bb^n, \\
P&=\mathrm{Id} &&\text{on } \partial \bb^n.
\end{aligned}
\right.
\end{equation*}
Moreover $P, \xi \in L^{p, n-2p}_2$ with
\begin{align}
\|d\xi \|_{L^{p, n-p}}+\|dP\|_{L^{p,n-p}}& \leq  C(n,m)\|\Omega \|_{L^{2p, n-2p}} \\
\|\Delta \xi \|_{L^{p, n-2p}}+\| \Delta P\|_{L^{p,n-2p}}& \leq
C(n,m)( \|\Omega \|_{L^{2p, n-2p}} + \|d\Omega \|_{L^{p, n-2p}}).
\end{align}
\end{theorem}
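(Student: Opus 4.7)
The plan is to follow the Tao--Tian iteration scheme described in the introduction, carried out in the Morrey--Sobolev scale appropriate to the effective dimensional count $n-(n-2p)=2p$, and to promote a local existence lemma to the global statement via a continuity argument along $\Omega_t=t\Omega$, $t\in[0,1]$. Since $p<n/2$, the Implicit Function Theorem strategy used for Theorem~\ref{thm main} is not available, and one must rely on direct contraction estimates.

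For the local step, given a co-closed background $\ast d\zeta$ and a sufficiently small perturbation $\lambda\in L^{2p,n-2p}$ with $d\lambda\in L^{p,n-2p}$, I would produce $P=e^u$ with $u\in W^{2,p}(\bb^n,so(m))$ and $u|_{\partial\bb^n}=0$ solving the system for $\Omega=\ast d\zeta+\lambda$. Set $u^0=0$ and iterate
\[
\Delta u^{k+1}=\ast d\ast\bigl(du^k-e^{-u^k}de^{u^k}-e^{-u^k}(\ast d\zeta+\lambda)e^{u^k}\bigr)
\]
with Dirichlet data at every step. Morrey Calder\'on--Zygmund estimates for $\Delta$, together with Adams-type Morrey--Sobolev embeddings, bound $\|u^{k+1}\|_{W^{2,p,n-2p}}$ in terms of the same norm of $u^k$ and the smallness of $\|\lambda\|_{L^{2p,n-2p}}+\|d\zeta\|_{L^{2p,n-2p}}$. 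The non-linear pieces $e^{-u}de^u-du$ and $e^{-u}(\ast d\zeta+\lambda)e^u-(\ast d\zeta+\lambda)$ are quadratic in the small quantities, so Morrey H\"older inequalities make the iteration a contraction on a small ball. The fixed point $u$ gives $P=e^u$, and $\xi$ is then recovered from $\ast d\xi=P^{-1}dP+P^{-1}\Omega P$ by solving the Hodge system $d\ast\xi=0$, $\xi|_{\partial\bb^n}=0$.

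Next I would run the continuity method. Let $S\subset[0,1]$ be the set of $t$ for which the system admits a solution $(P_t,\xi_t)$ satisfying the analogues of (1.1)--(1.2) with a uniform constant; then $0\in S$ via $(P,\xi)=(\id,0)$. For openness, given $t_0\in S$, apply the local existence lemma around $t_0$ using $\ast d\xi_{t_0}$ (after gauging by $P_{t_0}^{-1}$) as the background and $(t-t_0)P_{t_0}^{-1}\Omega P_{t_0}$ as the perturbation, then compose gauges. Closedness follows from the uniform Morrey--Sobolev bounds, weak compactness in $W^{2,p}(\bb^n)$ and in Morrey spaces, and convergence of the bilinear terms via Rellich-type compact embeddings. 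The estimates (1.1)--(1.2) themselves come from writing $\Delta\xi=\ast d(P^{-1}dP+P^{-1}\Omega P)$ (using $d\ast\xi=0$) and the analogous equation for $\Delta P$, applying Morrey Calder\'on--Zygmund, and absorbing quadratic terms by smallness of $\|\Omega\|_{L^{2p,n-2p}}$.

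The hardest step is the contraction estimate in the Morrey setting. The space $W^{2,p,n-2p}$ sits at the critical Sobolev threshold ($2\cdot p=n-(n-2p)=2p$), so $u$ is only controlled in BMO/VMO rather than $L^\infty$, and one must expand $e^{\pm u}$ carefully and pair terms so that each non-linear summand carries an explicit factor of a small quantity such as $\|du^k\|_{L^{p,n-p}}$ or $\|\lambda\|_{L^{2p,n-2p}}$. Choosing the iteration norm so that both the existence and the quantitative bounds close simultaneously is the technical heart of the argument; the remaining steps (continuity method, elliptic regularity, Hodge decomposition) are standard but require careful bookkeeping of both Morrey parameters $n-2p$ and $n-p$ throughout.
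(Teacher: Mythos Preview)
Your overall architecture (Tao--Tian contraction for local existence, then a continuity/connectedness argument) matches the paper's, but there is a genuine gap at precisely the step you flag as hardest. You propose to run the iteration directly in the critical Morrey--Sobolev space $L_2^{p,n-2p}$, where $u$ is controlled only in BMO. But the pointwise estimates that drive the contraction---for instance $|e^{-u}de^u-du|\le C(|u|\,|D^2u|+|du|^2)$---rely on Lipschitz continuity of $s\mapsto e^{-s}D\exp(s)$ on a set $\{|u|\le\beta\}$, which requires $\|u\|_{L^\infty}$ small, not merely $[u]_{BMO}$ small. ``Expand $e^{\pm u}$ carefully and pair terms'' does not close this: with only BMO control, products like $e^{-u}(\ast d\zeta)e^u$ cannot be bounded by $\|d\zeta\|_{L^{2p,n-2p}}$ times a quantity that is small in $u$, and there is no Morrey analogue of John--Nirenberg that would turn BMO smallness into the multiplicative control you need here.

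The paper circumvents this by an extra layer that your proposal omits. It first proves the result (Lemma~\ref{thm main mr}) under the stronger hypothesis $\Omega\in L^{2p,n-2p+\alpha}$ for some $\alpha>0$: then the Morrey--Sobolev embedding $L_1^{p,n-p+\alpha}\hookrightarrow C^{0,\gamma}$ (Proposition~\ref{morrey-sobolev}) gives honest $L^\infty$ control of $u$, the Tao--Tian contraction goes through, and the open/closed argument yields $U_\epsilon^\alpha=V_\epsilon^\alpha$. Only afterwards is Theorem~\ref{u-m} obtained: one approximates $\Omega$ by smooth $\Omega_k$ (mollification preserves $\|\Omega_k\|_{L^{2p,n-2p}}\le C\|\Omega\|_{L^{2p,n-2p}}$ uniformly, even though $L^{2p,n-2p+\alpha}$ is not dense in $L^{2p,n-2p}$), applies the $\alpha>0$ result to each $\Omega_k$, and passes to the limit using the $\alpha$-independent estimates \eqref{thm main:est1 mr} and \eqref{thm main:est3 mr}. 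Your scheme is missing this subcritical detour (or some alternative device producing $L^\infty$ smallness of $u$), without which the contraction step does not close.
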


And finally a version of Uhlenbeck's decomposition for a larger gauge group $CO_+(n)$ (i.e. conformal
transformations), which gives the decomposition theorem for a larger class of matrix-valued differential forms.

\begin{theorem}\label{thm:conformal}
Let $\frac{n}{2}<p<n$. There exists $\epsilon >0$ such that for any $\Omega \in W^{1,p}(\bb^n, TCO_+(m)\otimes \Lambda^1\R^n)$ such that $\|\Omega \|_{L^n}<\epsilon $ there exist $S\colon\bb^n\to CO_+(m)$ satisfying $\ln |S|\in W^{2,p}(\bb^n)$, $S/|S|\in W^{2,p}(\bb^n, SO(m))$ and\\ ${\zeta \in W^{2,p}(\bb^n, TCO_+(m)\otimes \Lambda^{n-2}\R^n)}$ such that
\begin{equation}
\left\{
\begin{aligned}
S^{-1}dS+S^{-1}\Omega S&=\ast d\zeta &&\text{on }\bb^n,\\
d\ast\zeta&=0 &&\text{on }\bb^n,\\
\zeta&=0 &&\text{on } \partial \bb^n;
\end{aligned}
\right.
\end{equation}
and such that
\begin{align*}
\|d\zeta\|_{W^{1,p}}+\|d(S/|S|)\|_{W^{1,p}}+\|d\ln|S|\|_{W^{1,p}}&\leq C(n,m)\|\Omega \|_{W^{1,p}}\\
\|d\zeta \|_{L^p}+\|d(S/|S|)\|_{L^p}+\|d\ln |S|\|_{L^p}&\leq C(n,m)\|\Omega \|_{L^p},\\
\|d\zeta \|_{L^n}+\|d(S/|S|)\|_{L^n}+\|d\ln |S|\|_{L^n}&\leq C(n,m)\|\Omega \|_{L^n}.
\end{align*}
\end{theorem}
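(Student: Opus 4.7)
The plan is to reduce Theorem~\ref{thm:conformal} to Theorem~\ref{thm main} applied to the $so(m)$-projection of $\Omega$, plus a linear Hodge--Laplace decomposition for the scalar projection. The structural observation driving the argument is that every $A\in CO_+(m)$ factors uniquely as $A=\lambda Q$ with $\lambda>0$ and $Q\in SO(m)$, so $TCO_+(m)=\R\cdot\id\oplus so(m)$, with the $\R\cdot\id$ summand central. Accordingly I write
$$
\Omega=\omega\cdot\id+\tilde\Omega,\qquad \zeta=\eta\cdot\id+\tilde\zeta,\qquad S=\rho\cdot Q,
$$
where $\omega,\eta$ are scalar-valued differential forms, $\tilde\Omega,\tilde\zeta$ are $so(m)$-valued, $\rho:=|S|>0$, $u:=\ln\rho$ and $Q:=S/|S|\in SO(m)$. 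Since $\id$ is central and conjugation by $Q$ preserves $so(m)$, the equation $S^{-1}dS+S^{-1}\Omega S=\ast d\zeta$ together with $d\ast\zeta=0$, $\zeta|_{\partial\bb^n}=0$ decouples into an $so(m)$-valued system
$$
Q^{-1}dQ+Q^{-1}\tilde\Omega Q=\ast d\tilde\zeta,\quad d\ast\tilde\zeta=0,\quad \tilde\zeta|_{\partial\bb^n}=0,\quad Q|_{\partial\bb^n}=\id,
$$
and an independent scalar system $du+\omega=\ast d\eta$, $d\ast\eta=0$, $\eta|_{\partial\bb^n}=0$.

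The $so(m)$-valued system is handled directly by Theorem~\ref{thm main} applied to $\tilde\Omega$. Boundedness of the projection $\Omega\mapsto\tilde\Omega$ in all relevant spaces gives $\|\tilde\Omega\|_{L^n}\leq C\|\Omega\|_{L^n}$, so after shrinking $\epsilon$ if necessary the smallness hypothesis of Theorem~\ref{thm main} is met and yields $Q\in W^{2,p}(\bb^n,SO(m))$ and $\tilde\zeta\in W^{2,p}\cap W^{1,p}_0$ satisfying the three scales of estimates on $dQ$ and $d\tilde\zeta$ that will feed into the conclusion.

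For the scalar system I normalize $u=0$ on $\partial\bb^n$; combined with $Q|_{\partial\bb^n}=\id$ this gives the (unrequired but natural) boundary value $S|_{\partial\bb^n}=\id$. Applying the codifferential $\delta=-\ast d\ast$ to $du+\omega=\ast d\eta$ and using $\delta\ast d=0$ yields the Dirichlet Poisson problem
$$
\Delta u=\pm\,\delta\omega \text{ in }\bb^n,\qquad u|_{\partial\bb^n}=0.
$$
The right-hand side is in divergence form, so standard Calder\'on--Zygmund and Riesz-transform estimates for the Dirichlet Laplacian on the ball give
$$
\|du\|_{L^n}\leq C\|\omega\|_{L^n},\qquad \|du\|_{L^p}\leq C\|\omega\|_{L^p},\qquad \|u\|_{W^{2,p}}\leq C\|\omega\|_{W^{1,p}}.
$$
With $u$ fixed, $\alpha:=du+\omega$ is co-closed and obeys the same bounds, so $\ast\alpha$ is a closed $(n-1)$-form. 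It remains to solve the Hodge system $\ast d\eta=\alpha$, $d\ast\eta=0$, $\eta|_{\partial\bb^n}=0$, an $(n-2)$-form analogue of the Poisson problem; one constructs $\eta\in W^{2,p}$ by solving $\Delta\eta=\delta\ast\alpha$ componentwise with boundary data chosen so that the Hodge gauge propagates, and the resulting estimates come again in the same three scales.

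Finally, setting $S:=e^u Q$ and $\zeta:=\eta\cdot\id+\tilde\zeta$ reassembles a solution of the full system, and the three estimates of the theorem follow by summing the scalar and $so(m)$ contributions. The whole proof is in this sense a reduction: the $\R\cdot\id$ direction is central, so no new nonlinear smallness argument is needed, and the nonlinearity is confined entirely to the $so(m)$ projection, where it has already been dealt with by Theorem~\ref{thm main}. The only point requiring care is the $L^p\to L^p$ bound on $du$: it cannot come from plain elliptic regularity applied to $\delta\omega\in W^{-1,p}$ but must exploit the divergence structure of $\delta\omega$ via Riesz transforms, an estimate which is however entirely standard for the Dirichlet Laplacian on the ball.
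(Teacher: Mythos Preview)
Your proposal is correct and follows essentially the same route as the paper: split $\Omega$ into its scalar part $\omega\cdot\id$ and its antisymmetric part $\tilde\Omega$, apply Theorem~\ref{thm main} to $\tilde\Omega$, and handle the scalar direction by linear Hodge/elliptic theory. The only organisational difference is that the paper obtains both $u$ and $\eta$ in one stroke from the Hodge decomposition $\omega=d\alpha+\ast d\beta$ with $\beta|_{\partial\bb^n}=0$ (taking $u=-\alpha$ and $\eta=\beta$), while you first solve the Dirichlet Poisson problem for $u$ and then a separate Hodge system for $\eta$; the two constructions are equivalent and give identical estimates.
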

In view of Theorem~\ref{thm:conformal}, we may ask a natural question:

\begin{question} What is the largest Lie subgroup $G$ of $GL(n)$ that can be used in an analogue of Rivi\`ere's theorem: for any matrix of $1$-differential forms $\Omega \in T_{\id}G$ there exists a gauge transformation $P\in G$ such that $\Omega^P=P^{-1}dP+P^{-1}\Omega P$ is co-closed  and certain integrability estimates on $P$, $\Omega^P$ and their derivatives, in terms of $\Omega$, hold?
\end{question}
The paper is structured as follows. In Section~\ref{sec:forms}, we recall Gaffney's inequality and discuss, how the boundary conditions on the decomposition components $P$ and $\xi$ allow us to estimate their Sobolev norms with only some of their derivatives.

Next, in Section~\ref{sec:Morrey} we recall the definitions and basic properties of Morrey and Morrey-Sobolev spaces we use.

In Section~\ref{sec:Sobolev}, we prove Theorem~\ref{thm main}. Next, in Section~\ref{sec:MorreySobolev}, we prove Theorem~\ref{u-m}, and finally, in Section~\ref{sec: conf}, we prove Theorem~\ref{thm:conformal}.

Throughout the paper, wherever applicable, we use the \emph{operator} norm $|T|=\sup_{|x|=1} |Tx|$ of a linear operator $T$ (thus, in particular, $|P|=1$ almost everywhere) -- this simplifies the estimates of compositions. A constant $C$ may vary from line to line in calculations. 

\section{Sobolev spaces and differential forms} \label{sec:forms}
Throughout the paper, we use differential forms with coefficients in Sobolev spaces, i.e. Sobolev differential forms. With this in mind, we write e.g.
$$\Omega\in W^{1,p}(\bb^n,so(m)\otimes \Lambda^1\R^n),$$
which means that $\Omega$ is a $W^{1,p}$ function with values in the vector space $so(m)\otimes \Lambda^1\R^n$. This allows us to define the \emph{full} Sobolev norm $\|\Omega\|_{W^{1,p}}$, which disregards the differential form aspect of it.

It is tempting to consider Sobolev spaces of differential forms using only the two derivatives that are natural in this setting: the differential $d$ and the co-differential $\ast d \ast$, instead of the full derivative $D$. This is possible if we restrict to forms that satisfy certain boundary conditions, see e.g. \cite{IwaniecScottStroffolini}. It is important to realize, however, that for a general differential form $\omega$, the differential $d\omega$ and the co-differential $\ast d\ast \omega$ capture only some derivatives of the coefficients, and in general one should not expect to control the $W^{1,p}$-norm of $\omega$ by the $L^p$-norms of $\omega$, $d\omega$ and/or $\ast d \ast \omega$ (we may skip the first star and use $d\ast\omega$, since the Hodge star $\ast$ is an isomorphism and $\|\ast d\ast \omega\|_{L^p}=\|d\ast \omega\|_{L^p}$).

However, if the domain in which we consider our forms is smooth (as is in our case, where the only domains considered are $n$-dimensional balls) and the form $\omega$ satisfies certain boundary conditions (namely, $\omega$ has vanishing tangent or normal component on the domain's boundary), then Gaffney's inequality (\cite{Gaffney}, see also \cite[Theorem 4.8]{IwaniecScottStroffolini},\cite[Theorem 10.4.1]{IwaniecMartin}) holds:
\begin{equation}\label{gaffney}
\|\omega\|_{W^{1,p}}\leq C (\|\omega\|_{L^p}+\|d\omega\|_{L^p}+\|\ast d\ast\omega\|_{L^p}),
\end{equation}
where the constant $C$ depends on the domain and the exponent $p$ only.

We refer the reader to the paper \cite{IwaniecScottStroffolini} and the book \cite{IwaniecMartin} for more details on Sobolev differential forms.

Let us now see how this applies to the components of Uhlenbeck's decomposition.

The orthogonal mapping $P$ is a function (a $0$-form), and thus its differential $dP$ captures all the derivatives of its coefficients, i.e. $\|\nabla P\|_{L^p}=\|dP\|_{L^p}$. Also, we assume that $P$ restricted to the boundary of the ball $\bb^n$ equals (in the sense of traces) to the identity matrix $\id$, therefore $dP=d(P-\id)$ has vanishing tangent component and Gaffney's inequality \eqref{gaffney} or, since $P$ is a function, standard elliptic estimates give us a comparison between $\|dP\|_{W^{1,p}}$ and $\|dP\|_{L^p}+\|\Delta P\|_{L^p}$.

As for the $(n-2)$-form $\xi$, we have strong assumptions given in \eqref{main}: $\xi$~vanishes on the boundary and it is co-closed. Thus both $\xi$ and $d\xi$ have vanishing tangent components and again Gaffney's inequality \eqref{gaffney} allows us to compare $\|d\xi\|_{W^{1,p}}$ and $\|d\xi\|_{L^p}+\|\Delta \xi\|_{L^p}$, with $\Delta \xi=\ast d\ast\! d\xi$.


\section{Morrey spaces}\label{sec:Morrey}
In this section we recall the properties of Morrey spaces needed in our paper; for more information and for proofs of elementary properties of Morrey spaces we refer the reader e.g. to the monograph \cite{Adams}.

Throughout the paper we customarily use barred integral to denote the integral average and we write $f_E$ for the integral average of $f$ over the set $E$:
$$
f_E=\mvint_E f=\frac{1}{|E|}\int_E f.
$$

Let $U \subset \R^n$ be an open and bounded set. Recall that the Morrey space $L^{p,s}(U)$ is a collection of all functions \mbox{$f\in L^p(U)$} such that
$$
|| f ||_{L^{p, s}}^p = \sup_{ x_0 \in U, r>0}
\left( \frac{1}{r^s} \int_{B_r(x_0) \cap U} | f(y) |^p dy \right) < \infty.
$$

When $s = 0$, the Morrey space $L^{p,0}(U)$ is the same as the usual Lebesgue $L^p(U)$ space. When $s = n$, the dimension of the ambient space, it easily follows from the Lebesgue Differentiation Theorem that the Morrey space $L^{p,n}(U)$ is equivalent to $L^\infty(U)$. Morrey spaces are Banach spaces.

For $1 \leq p \leq q < \infty$  and $ s, \sigma \geq 0$ such that
$ \frac{s -n}{p} \leq \frac {\sigma -n}{q}$ we have
$$
L^{q,\sigma}(U)  \hookrightarrow L^{p,s}(U),
$$
in particular
$$
L^{q,n-q}(U)  \hookrightarrow L^{p,n-p}(U).
$$

\begin{definition}
We define the Morrey-Sobolev space $L^{p,n-kp}_k(U)$ as
$$
L^{p,n-kp}_k(U) = \left\{f \in W^{k,p}(U) : \nabla^j f \in L^{p,n-jp}(U) \quad \text{for $j=1,\ldots,k$} \right\}
$$
with the norm
$$
|| f ||_{L^{p,n-kp}_k(U)} = || f ||_{L^{p}(U)} + \sum_{j=1}^k \| \nabla^j f \|_{L^{p,n-jp}(U)}.
$$
\end{definition}
Note that with this definition, $f\in L_k^{p,n-kp}(U)$ does not imply Morrey estimates for $f$ itself.
\medskip

\begin{definition}

The space $BMO(U)$ consists of these $f\in L^1_{loc}(U)$ for which the expression
$$
[f]_{BMO(U)} =  \sup_{B\subset U}
\left( \mvint_{B} | f(y) - f_B| \right)
$$
is finite.
\end{definition}

Poincar\'e-Wirtinger's inequality on a ball $B$ of radius $r$
$$
\int_{B} | f - f_B |^p \leq C(n,p)r^p \int_B | \nabla f |^p
$$
and H\"older's inequality immediately yield
\begin{equation}\label{bmo morrey comp}
[f]_{BMO(U)} \leq C \| \nabla f \|_{L^{p.n-p}(U)}.
\end{equation}

Thus $L_k^{p,n-kp}(U)\subset BMO(U)$.

\medskip

We say that a domain $U$ is of type $(A)$ if there exists a constant $C>0$ such that for any $x_0 \in U$ and $0<r< diam (U)$
$$
|B_r(x_0) \cap U| \geq C r^n.
$$
This excludes domains with outward cusps.
For domains of type (A) we have the following generalization of Sobolev’s embedding theorem (see~\cite{Morrey}).

\begin{proposition}[Morrey-Sobolev embedding] \label{morrey-sobolev}
Let $U \subset \R^n$ be of type $(A)$; assume that $1\leq p < \infty$ and $\alpha \in (0,p)$. If $f \in W^{1,p}(U)$ is such that $\nabla f \in L^{p,n-p+\alpha}(U)$, then $f \in C^{0,\alpha /p}(U)$.
\end{proposition}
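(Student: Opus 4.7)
The strategy is the classical Campanato characterisation of Hölder-continuous functions: turn the Morrey bound on $\nabla f$ into a mean-oscillation estimate for $f$, then telescope averages over nested shrinking balls to obtain pointwise Hölder control. Throughout, set $U_r(x) := B_r(x) \cap U$; the type $(A)$ property gives a uniform lower bound $|U_r(x)| \geq C r^n$ for $x \in U$ and $0 < r < \mathrm{diam}(U)$.

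First I would establish the Campanato-type mean oscillation estimate
$$
\mvint_{U_r(x)} |f - f_{U_r(x)}|^p \leq C\,r^\alpha\,\|\nabla f\|_{L^{p,n-p+\alpha}}^p.
$$
The right-hand side comes from applying Poincaré-Wirtinger on the cap $U_r(x)$ (which contributes the factor $r^p$), estimating $\int_{U_r(x)} |\nabla f|^p \leq r^{n-p+\alpha}\|\nabla f\|_{L^{p,n-p+\alpha}}^p$ directly from the Morrey norm, and then dividing by $|U_r(x)| \geq C r^n$. Hölder's inequality converts this into $\mvint_{U_r(x)} |f - f_{U_r(x)}| \leq C\,r^{\alpha/p}\|\nabla f\|_{L^{p,n-p+\alpha}}$.

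Next I would compare averages over nested caps: for $0 < \rho < r$,
$$
|f_{U_\rho(x)} - f_{U_r(x)}| \leq \frac{1}{|U_\rho(x)|}\int_{U_\rho(x)} |f - f_{U_r(x)}| \leq C\Bigl(\tfrac{r}{\rho}\Bigr)^n r^{\alpha/p}.
$$
Specialising to $\rho = r/2$ and iterating yields a geometrically convergent series, so the sequence of averages $f_{U_{r\cdot 2^{-k}}(x)}$ is Cauchy; its limit coincides with $f(x)$ at every Lebesgue point, and one recovers $|f(x) - f_{U_r(x)}| \leq C\,r^{\alpha/p}$. Finally, for any $x,y \in U$ with $|x-y| = r$ small, the caps $U_{2r}(x)$ and $U_{2r}(y)$ share a set $W$ of measure $\geq C r^n$ (again by type $(A)$); combining the bounds at $x$ and at $y$ through the common average $f_W$ produces $|f(x) - f(y)| \leq C\,r^{\alpha/p}$, which is the desired Hölder estimate.

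The main technical obstacle is securing the Poincaré-Wirtinger inequality on the boundary caps $U_r(x)$: this does not follow from type $(A)$ alone and typically requires either a John-type geometric condition on $U$ or a $W^{1,p}$ extension operator across $\partial U$. Once this ingredient is in place, the rest of the argument is a routine Campanato telescoping, with no further use of the Morrey hypothesis beyond the first step.
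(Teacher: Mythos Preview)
The paper does not actually prove this proposition; it is stated with a reference to Morrey's monograph \cite{Morrey} and then used as a black box (notably in the proof of Lemma~\ref{lemma:existence2}, via the embedding $L_1^{p,n-p+\alpha}\hookrightarrow C^{0,\gamma}$). So there is nothing in the paper to compare your argument to line by line.

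Your Campanato-telescoping outline is the standard modern route to results of this type and is essentially correct. You are also right to flag the one genuine subtlety: the Poincar\'e--Wirtinger inequality on the boundary caps $U_r(x)=B_r(x)\cap U$ is \emph{not} a consequence of the type~$(A)$ condition alone, and without it your first step does not go through for general type~$(A)$ domains. Morrey's original argument avoids this by working with the pointwise Riesz-potential representation
\[
|f(x)-f_{U_r(x)}|\leq \frac{C}{|U_r(x)|}\int_{U_r(x)}\int_0^1 |\nabla f(x+t(y-x))|\,|x-y|\,dt\,dy
\leq C\int_{U_r(x)}\frac{|\nabla f(z)|}{|x-z|^{n-1}}\,dz,
\]
valid when the segments stay in $U$ (e.g.\ when $U_r(x)$ is star-shaped with respect to $x$), and then estimating the fractional integral directly with the Morrey bound on $\nabla f$; this bypasses Poincar\'e on the caps entirely. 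For the application in the present paper the domain is always the ball $\bb^n$, where either approach works without difficulty.
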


Let us recall the BMO-Gagliardo-Nirenberg type result due to Adams and Frasier \cite{AdamsFrasier} (see also \cite{StrzeleckiGN}):
\begin{proposition}
For any $s>1$ and $f\in W^{2,s}\cap BMO(\R^n)$ there holds
\begin{equation}\label{GN BMO}
\|df\|^2_{L^{2s}(\R^n)}\leq C [f]_{BMO(\R^n)}\|D^2 f\|_{L^s(\R^n)}
\end{equation}
\end{proposition}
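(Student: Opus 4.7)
My plan is to reduce \eqref{GN BMO} to the pointwise bound
\begin{equation*}
|\nabla f(x)|^2 \leq C\,[f]_{BMO(\R^n)}\, M(D^2 f)(x),
\end{equation*}
valid for almost every $x\in\R^n$, where $M$ denotes the Hardy-Littlewood maximal operator. Once this is available, raising both sides to the $s$-th power, integrating over $\R^n$, and invoking the $L^s$-boundedness of $M$ (which genuinely uses the hypothesis $s>1$) yields
\begin{equation*}
\int_{\R^n} |\nabla f|^{2s}\,dx \leq C\,[f]_{BMO}^{\,s}\, \|D^2 f\|_{L^s(\R^n)}^{\,s},
\end{equation*}
from which \eqref{GN BMO} follows by taking $s$-th roots. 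I would first arrange a standard density reduction so that $f$ can be assumed smooth while establishing the pointwise inequality.

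For the pointwise estimate I would use a Hedberg-style two-scale decomposition. Fix $x\in\R^n$ and a parameter $r>0$, and let $\phi_r$ be a standard smooth mollifier supported in $B(0,r)$. Split
\begin{equation*}
\nabla f(x) = \nabla(f*\phi_r)(x) + \bigl(\nabla f - \nabla(f*\phi_r)\bigr)(x).
\end{equation*}
For the first piece I would exploit $\int\nabla\phi_r=0$ to subtract the mean $f_{B(x,r)}$ under the convolution; then $\|\nabla\phi_r\|_\infty\leq C r^{-n-1}$ and the definition of the $BMO$ seminorm on $B(x,r)$ give
\begin{equation*}
|\nabla(f*\phi_r)(x)| \leq C\, r^{-1}\, [f]_{BMO}.
\end{equation*}
For the second piece I would expand $\nabla f(x)-\nabla f(x-y)=\int_0^1 D^2 f(x-ty)\cdot y\,dt$; a change of variables $y\mapsto ty/s$ shows that, for each fixed $t$, the resulting kernel against $\phi_r$ is itself a mollifier at scale $\leq r$ applied to $|D^2 f|$ at the point $x$, whence
\begin{equation*}
|\nabla f(x)-\nabla(f*\phi_r)(x)| \leq C\, r\, M(D^2 f)(x).
\end{equation*}
Summing the two bounds and optimising in $r$ (the balance occurs at $r^2 \sim [f]_{BMO}/M(D^2 f)(x)$) produces the desired pointwise estimate.

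I expect the main difficulty to be bookkeeping rather than conceptual. One must keep the mollifier scales carefully aligned in the Taylor step, so that the second bound comes out as $r\, M(D^2 f)$ rather than involving $D^3 f$ or a wrong power of $r$; and one must arrange the density reduction from $W^{2,s}\cap BMO$ to smooth test functions in a way that preserves control of both the $BMO$ seminorm and the $W^{2,s}$ norm in the limit. Conceptually, the proof simply reflects the heuristic that $BMO$ governs $f$ on large scales while $W^{2,s}$ governs $D^2 f$ on small scales, and the two-scale splitting is the natural mechanism for using each at its optimal regime.
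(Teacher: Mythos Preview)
The paper does not supply its own proof of this proposition; it is quoted as a known result of Adams and Frazier (with a further reference to Strzelecki). Your Hedberg-type argument---establishing the pointwise bound $|\nabla f(x)|^2\leq C\,[f]_{BMO}\,M(D^2 f)(x)$ by a two-scale mollifier splitting and then invoking the $L^s$-boundedness of the maximal operator for $s>1$---is correct and self-contained, and is in fact one of the standard routes to such BMO--Gagliardo--Nirenberg inequalities. (A tiny slip: in your change of variables you wrote $y\mapsto ty/s$; presumably you meant $z=ty$, which is what the estimate needs.)
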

This result can be easily localized to functions in $W^{2,s}\cap BMO(B)$, for an arbitrary ball $B\subset\R^n$ (see \cite[Proposition 4.3]{wang-biharm}).
\begin{proposition}
For any ball $B\subset\R^n$ of radius $r$, if $f\in W^{2,p}\cap BMO(B)$, then
\begin{equation}\label{GN BMO local}
\|df\|^2_{L^{2p}(B)}\leq C[f]_{BMO(B)}\left( \|D^2f\|_{L^p(B)}+r^{-1}\|df\|_{L^p(B)}\right)
\end{equation}
\end{proposition}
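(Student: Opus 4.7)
My plan is to deduce the local estimate from the global version \eqref{GN BMO} via scaling and a suitable extension of $f$ to $\R^n$.

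First, rescaling by $f(x) \mapsto f(rx)$ reduces the claim on $B_r$ to the case $r=1$: the quantities $\|df\|_{L^{2p}}^2$, $\|D^2 f\|_{L^p}$ and $r^{-1}\|df\|_{L^p}$ all carry the same power of $r$ while $[\cdot]_{BMO}$ is scale-invariant, and the $r^{-1}$ factor in the statement matches exactly the different scalings of $Df$ and $D^2 f$. On the unit ball I would then replace $f$ by $f - f_{B_1}$ (identical differentials and identical $BMO$ seminorm) and construct a compactly supported extension $g \in W^{2,p}(\R^n) \cap BMO(\R^n)$, for instance via Stein-type reflection across $\partial B_1$ followed by multiplication by a smooth cutoff supported in $B_2$. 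The $W^{2,p}$ part is routine: the product rule for the Hessian together with Poincar\'e's inequality on $B_1$ (absorbing $\|f - f_{B_1}\|_{L^p(B_1)}$ into $\|df\|_{L^p(B_1)}$) gives
$$\|D^2 g\|_{L^p(\R^n)} \leq C\bigl(\|D^2 f\|_{L^p(B_1)} + \|df\|_{L^p(B_1)}\bigr).$$

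With the analogous $BMO$ bound $[g]_{BMO(\R^n)} \leq C [f]_{BMO(B_1)}$ in hand, the global inequality applied to $g$ combined with $dg = df$ on $B_1$ yields
$$\|df\|_{L^{2p}(B_1)}^2 \leq \|dg\|_{L^{2p}(\R^n)}^2 \leq C[g]_{BMO(\R^n)}\|D^2 g\|_{L^p(\R^n)} \leq C[f]_{BMO(B_1)}\bigl(\|D^2 f\|_{L^p(B_1)} + \|df\|_{L^p(B_1)}\bigr),$$
and undoing the rescaling delivers the general statement on $B_r$.

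The main obstacle is the $BMO$-boundedness of the extension $g$. Invariance of $BMO$ under the bi-Lipschitz reflection across $\partial B_1$ is a standard change-of-variables argument; the subtler point is that multiplication by the cutoff $\chi$ does not inflate the seminorm. For small test balls $B' \subset \R^n$ I would use the pointwise identity $\chi(x)h(x) - \chi(y)h(y) = \chi(x)(h(x)-h(y)) + (\chi(x)-\chi(y))h(y)$, bound the first contribution by $[h]_{BMO(B_1)}$ and the second via $|\chi(x)-\chi(y)|\leq C|x-y|$ combined with the logarithmic chaining $|h_{B'}| \leq C\log(1/\rho)[h]_{BMO(B_1)}$ (valid because $h = f - f_{B_1}$ has mean zero on $B_1$, so $\rho\log(1/\rho)$ absorbs the prefactor); for balls crossing the boundary of the support of $g$ I would bound the oscillation by $\mvint_{B'}|g|$ (using $\inf_c\mvint_{B'}|g-c| \leq \mvint_{B'}|g|$ with $c=0$) and estimate the latter by John-Nirenberg. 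Once this extension is at hand, every other step in the plan is purely computational.
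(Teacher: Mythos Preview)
Your approach is essentially the same as the paper's: the paper simply says one extends $f$ to $\tilde f\in W^{2,p}\cap BMO(\R^n)$ and applies the global inequality \eqref{GN BMO}, referring to \cite{wang-biharm} for details. Your proposal fleshes out exactly this extension-plus-global-estimate strategy (with the rescaling making the $r^{-1}$ factor transparent), so it matches the paper's proof.
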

The proof goes exactly as in \cite{wang-biharm} (where $p$ equals 2): we extend $f$ to a function $\tilde{f}\in W^{2,p}\cap BMO(\R^n)$ and apply the estimate \eqref{GN BMO} to $\tilde{f}$.

As the consequence of the above estimates we obtain the following product estimate.
\begin{lemma}\label{Morrey product}
Assume $B\subset\R^n$ is a ball, $\alpha \in [0,p]$, $u\in L_2^{p,n-2p+\alpha}(B)$ and $v \in L_2^{p,n-2p}(B)$. Then
$$
\|\,|du|\cdot|dv|\,\|_{L^{p,n-2p+\alpha}(B)}\leq C(n,p) \|u\|_{L_2^{p,n-2p+\alpha}(B)}\|dv\|_{L^{2p,n-2p}(B)}.
$$
\end{lemma}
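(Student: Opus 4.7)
The plan is, for every ball $B_r \subset \R^n$ (intersected with $B$ if necessary), to bound $\int_{B_r}(|du||dv|)^p$ by a constant multiple of $r^{n-2p+\alpha}$ times $\|u\|_{L_2^{p,n-2p+\alpha}}^p\|dv\|_{L^{2p,n-2p}}^p$; taking the supremum over $B_r$ will then yield the required Morrey norm estimate. The starting point is Cauchy-Schwarz,
$$\int_{B_r}(|du||dv|)^p \le \|du\|_{L^{2p}(B_r)}^p\cdot\|dv\|_{L^{2p}(B_r)}^p,$$
after which the $dv$-factor is already in the desired form, thanks to the Morrey hypothesis: $\|dv\|_{L^{2p}(B_r)}^p \le \|dv\|_{L^{2p,n-2p}}^p\, r^{(n-2p)/2}$.

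The $du$-factor is handled by the localized BMO-Gagliardo-Nirenberg inequality \eqref{GN BMO local} on $B_r$. The terms $\|D^2u\|_{L^p(B_r)}$ and $r^{-1}\|du\|_{L^p(B_r)}$ are each controlled by $C\|u\|_{L_2^{p,n-2p+\alpha}}\, r^{(n-2p+\alpha)/p}$ straight from the definition of the Morrey norm. The crucial piece is the $BMO$ seminorm, for which I would use Poincaré's inequality on sub-balls together with the improved Morrey integrability $du\in L^{p,n-p+\alpha}$: for any $B'\subset B_r$ of radius $r'\le r$,
$$\mvint_{B'}|u-u_{B'}| \le C(r')^{\alpha/p}\|du\|_{L^{p,n-p+\alpha}} \le Cr^{\alpha/p}\|u\|_{L_2^{p,n-2p+\alpha}},$$
hence $[u]_{BMO(B_r)}\le Cr^{\alpha/p}\|u\|_{L_2^{p,n-2p+\alpha}}$. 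Inserted into \eqref{GN BMO local}, this produces
$$\|du\|_{L^{2p}(B_r)}^p \le C\|u\|_{L_2^{p,n-2p+\alpha}}^p\, r^{(n-2p+2\alpha)/2}.$$

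Multiplying the two $L^{2p}$-estimates, the exponents of $r$ add to $(n-2p+2\alpha)/2 + (n-2p)/2 = n-2p+\alpha$, which is exactly the scaling of the target Morrey space. I expect the main obstacle to be the $r^{\alpha/p}$ refinement of the BMO seminorm: the naive bound $[u]_{BMO(B_r)}\le[u]_{BMO(B)}$ from \eqref{bmo morrey comp} would leave one with exponent $n-2p+\alpha/2$, hence only membership in the strictly weaker space $L^{p,n-2p+\alpha/2}$, which on a bounded domain cannot be upgraded to $L^{p,n-2p+\alpha}$ with constants independent of $B$. The refinement works precisely because the Morrey-Sobolev hypothesis gives $du\in L^{p,n-p+\alpha}$, not merely $L^{p,n-p}$.
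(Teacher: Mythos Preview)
Your proposal is correct and follows essentially the same approach as the paper's own proof: Cauchy--Schwarz on $B_r$, the localized BMO--Gagliardo--Nirenberg inequality \eqref{GN BMO local} applied to $u$, and the refined bound $[u]_{BMO(B_r)}\le C r^{\alpha/p}\|du\|_{L^{p,n-p+\alpha}}$ obtained from Poincar\'e (this is exactly the paper's \eqref{trading alpha}). The paper organizes the computation as one chain of suprema and then separates the factors, while you fix $B_r$ and track the powers of $r$ explicitly; the content and the key observation about the $r^{\alpha/p}$ gain are identical.
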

\begin{proof}
First, let us observe that
\begin{equation}\label{trading alpha}
\sup_{B_r\subset B}r^{-\alpha/p} [u]_{BMO(B_r)}\leq C \|du\|_{L^{p,n-p+\alpha}(B)},
\end{equation}
where the supremum is taken over all balls $B_r\subset B$ and the constant depends only on $n$ and $p$.
Indeed, for any such ball $B_r\subset B$ we have
\begin{equation*}
\begin{split}
r^{-\alpha}[u]^{p}_{BMO(B_r)}&=r^{-\alpha}\left(\sup_{B_\rho\subset B_r}\mvint_{B_\rho} |u-u_{B_\rho}|\right)^{p}
\leq C r^{-\alpha}\sup_{B_\rho\subset B_r}\rho^{p-n}\int_{B_\rho}|du|^{p}\\
&\leq C \sup_{B_\rho\subset B_r}\rho^{p-n-\alpha}\int_{B_\rho}|du|^{p}\leq C\sup_{B_\rho\subset B}\rho^{p-n-\alpha}\int_{B_\rho}|du|^{p}\\
&=C\|du\|^{p}_{L^{p,n-p+\alpha}(B)},
\end{split}
\end{equation*}
and the constant $C$ comes from the Poincar\'e  inequality (c.f. \eqref{bmo morrey comp}). Taking supremum over all balls $B_r\subset B$ yields \eqref{trading alpha}.
Now,
\begin{equation*}
\begin{split}
\|\,&|du|\cdot|dv|\,\|^{2p}_{L^{p,n-2p+\alpha}(B)}=\sup_{B_r\subset B} \left( \frac{1}{r^{n-2p+\alpha}}\int_{B_r} |du|^p\cdot |dv|^p \right)^2\\
&\leq \sup_{B_r\subset B} \left(\frac{1}{r^{2n-4p+2\alpha}} \int_{B_r} |du|^{2p} \int_{B_r} |dv|^{2p}\right)\\
&\stackrel{\eqref{GN BMO local}}{\leq} C(n,p)\sup_{B_r\subset B}\left(\frac{1}{r^{2n-4p+2\alpha}}[u]^p_{BMO(B_r)}\left(\int_{B_r}
|D^2 u|^p+\frac{1}{r^{p}}\int_{B_r}|du|^p\right)\right.\\
&\qquad\qquad\qquad\qquad\qquad\times\left.\int_{B_r}|dv|^{2p}\right)\\
&\leq C(n,p)\left(\sup_{B_r\subset B} \frac{1}{r^{\alpha/p}}[u]_{BMO(B_r)}\right)^p\\
&\qquad\qquad\qquad\times \sup_{B_r\subset B} \left(\frac{1}{r^{n-2p+\alpha}}\int_{B_r}|D^2 u|^p+\frac{1}{r^{n-p+\alpha}}\int_{B_r}|du|^p\right)\\
&\qquad\qquad\qquad\times\left. \sup_{B_r\subset B} \frac{1}{r^{n-2p}}\int_{B_r}|dv|^{2p}\right)\\
&\stackrel{\eqref{bmo morrey comp},\eqref{trading alpha}}{\leq}C(n,p) \|u\|^{2p}_{L_2^{p,n-2p+\alpha}(B)}\|dv\|^{2p}_{L^{2p,n-2p}(B)}.
\end{split}
\end{equation*}

\end{proof}

In particular, for $\alpha \in [0,p]$ we obtain (see also \cite[Proposition 3.2]{Struwe})
\begin{align} \label{kwadrat w Morreyach}
\| du \|^2_{L^{2p,n-2p+\alpha}(B)}
& \leq
C \| du \|_{L^{2p,n-2p}(B)} \| u \|_{L_2^{p,n-2p+\alpha}(B)} \\
& \leq
C \| u \|^2_{L_2^{p,n-2p+\alpha}(B)}. \nonumber
\end{align}


\section{Uhlenbeck's decomposition, the case $n/2 \leq p < n$}\label{sec:Sobolev}

In this section we prove Theorem~\ref{thm main}. We first state it (as Lemma~\ref{thm main-b}) and prove it in the case $p \in (n/2,n)$; the case $p = n/2$ follows by approximation (see Corollary \ref{corr} at the end of this section).

\begin{lemma}\label{thm main-b}
Let $\frac{n}{2}<p<n$. There exists $\epsilon >0$ such that for any antisymmetric matrix $\Omega \in W^{1,p}(\bb^n, so(m)\otimes \Lambda^1\R^n)$  of $1$-differential forms on $\bb^n$ such that
$$
\|\Omega \|_{L^n}<\epsilon
$$
there exist $P\in W^{2,p}(\bb^n,SO(m))$, and $\xi \in W^{2,p}\cap W_0^{1,p}(\bb^n, so(m)\otimes \Lambda^{n-2}\R^n)$ satisfying the system
\begin{equation}\label{main}
\left\{
\begin{aligned}
P^{-1}dP+P^{-1}\Omega P&=\ast d\xi &&\text{on }\bb^n,\\
d\ast\xi&=0 &&\text{on }\bb^n,\\
\xi&=0 &&\text{on } \partial \bb^n, \\
P&=\mathrm{Id} &&\text{on } \partial \bb^n;
\end{aligned}
\right.
\end{equation}
and such that
\begin{subequations}\label{thm main:est}
\begin{align}
\|d\xi \|_{L^p}+\|dP\|_{L^p}&\leq C(n,m,p)\|\Omega \|_{L^p},\label{thm main:est2}\\
\|d\xi \|_{L^n}+\|dP\|_{L^n}&\leq C(n,m)\|\Omega \|_{L^n},\label{thm main:est3}\\
\|d\xi\|_{W^{1,p}}+\|dP\|_{W^{1,p}}&\leq C(n,m,p)\|\Omega \|_{W^{1,p}}.\label{thm main:est1}
\end{align}
\end{subequations}
\end{lemma}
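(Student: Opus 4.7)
The plan is to follow the classical Uhlenbeck strategy outlined in the introduction. I parametrize the gauge as $P = e^u$ with $u\colon \bb^n \to so(m)$ and $u|_{\partial\bb^n} = 0$, reducing the system \eqref{main} to a single nonlinear elliptic PDE for $u$, which I solve by the Implicit Function Theorem in a neighborhood of $(u,\Omega)=(0,0)$ and then propagate to the full scale-invariant smallness regime $\|\Omega\|_{L^n}<\epsilon$ by a continuity method. The $(n-2)$-form $\xi$ is recovered at the end from $P$ by standard Hodge theory on the ball.

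Concretely, applying $d\ast$ to the first equation of \eqref{main} kills its right-hand side (since $d\ast\ast d\xi = \pm d^2\xi = 0$), so the system is equivalent to finding $u\in\mathcal{B}:=W^{2,p}(\bb^n,so(m))\cap W^{1,p}_0(\bb^n,so(m))$ satisfying
$$
T(u,\Omega) := \ast d\ast\bigl(e^{-u}\,de^u + e^{-u}\,\Omega\,e^u\bigr) = 0,
$$
and then solving the Hodge-type boundary value problem $d\xi=\ast(P^{-1}dP+P^{-1}\Omega P)$, $d\ast\xi=0$, $\xi|_{\partial\bb^n}=0$ for the $(n-2)$-form $\xi$ (the data is closed by construction, and elliptic regularity upgrades $\xi$ to $W^{2,p}$). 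Since $p>n/2$ yields $W^{2,p}\hookrightarrow L^\infty$, the map $T\colon\mathcal{B}\times W^{1,p}\to L^p$ is smooth, $T(0,0)=0$, and a direct computation gives $D_uT(0,0)v=\Delta v$, which is an isomorphism $\mathcal{B}\to L^p$ by classical elliptic theory. The IFT therefore produces a smooth solution map $\Omega\mapsto u(\Omega)$ on a $W^{1,p}$-neighborhood of $0$.

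To bridge the gap between $W^{1,p}$-smallness (what the IFT gives) and $L^n$-smallness (what is assumed), I run a continuity argument along $\Omega_t = t\Omega$, $t\in[0,1]$, defining
$$
I = \bigl\{t\in[0,1] : T(\cdot,\Omega_t)=0 \text{ admits a solution } u_t\in\mathcal{B} \text{ with a priori bounds matching \eqref{thm main:est}}\bigr\}.
$$
Trivially $0\in I$. Openness at $t_0\in I$ follows from a fresh IFT application at $(u_{t_0},\Omega_{t_0})$: smallness of $\|u_{t_0}\|_{L^\infty}$ (from the a priori bound and $W^{2,p}\hookrightarrow L^\infty$) and of $\|\Omega_{t_0}\|_{L^n}$ makes $D_uT(u_{t_0},\Omega_{t_0})$ a lower-order perturbation of $\Delta$, hence still an isomorphism. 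Closedness reduces to the a priori estimates together with weak $W^{2,p}$-compactness to extract a limit solution along $t_k\to t$.

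The core of the argument is therefore the a priori estimate \eqref{thm main:est3}, from which \eqref{thm main:est2} and \eqref{thm main:est1} follow by bootstrapping. Expanding the PDE as
$$
\Delta u \;=\; \ast d\ast\bigl(du - e^{-u}\,de^u\bigr) \;-\; \ast d\ast\bigl(e^{-u}\,\Omega\,e^u\bigr),
$$
the right-hand side splits into a quadratic-in-$du$ piece with $u$-bounded coefficients and a linear combination of $\Omega$, $d\Omega$, and $\Omega\cdot du$. Applying elliptic estimates for the Dirichlet Laplacian on the ball, H\"older controls $\|\,|du|^2\,\|_{L^{n/2}}\leq\|du\|_{L^n}^2$ and $\|\Omega\cdot du\|_{L^{n/2}}\leq\|\Omega\|_{L^n}\|du\|_{L^n}$, both of which are absorbed on the left under the smallness hypothesis, yielding $\|du\|_{L^n}\lesssim\|\Omega\|_{L^n}$. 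Gaffney's inequality \eqref{gaffney} applied to the Hodge reconstruction of $\xi$ transfers the bound to $d\xi$. The $L^p$ and $W^{1,p}$ bounds then follow from the same argument at the respective exponents, using Sobolev embedding $\|\,|du|^2\,\|_{L^p}\leq\|du\|_{L^n}\|du\|_{L^{np/(n-p)}}\lesssim\|du\|_{L^n}\|u\|_{W^{2,p}}$ and absorbing. The main obstacle is ensuring that, uniformly along the continuation, the $L^n$-smallness of $\Omega$ alone suffices to dominate both the quadratic self-interaction $|du|^2$ and the cross term $\Omega\cdot du$; this is what forces the smallness constant $\epsilon$ to be chosen in terms of the absorption constants coming from the elliptic estimates and Gaffney's inequality.
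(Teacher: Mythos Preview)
Your overall architecture (IFT for local existence, then continuity) matches the paper's, but the a~priori estimate---the heart of the argument---has a genuine gap. You claim $\|du\|_{L^n}\lesssim\|\Omega\|_{L^n}$ by taking $L^{n/2}$ norms of $\Delta u$ and absorbing the $|du|^2$ and $\Omega\cdot du$ pieces. But you yourself list $d\Omega$ among the right-hand terms, and correctly so: expanding $\ast d\ast(e^{-u}\Omega e^u)$ produces $e^{-u}(\ast d\ast\Omega)e^u$, whose $L^{n/2}$ norm is $\|\ast d\ast\Omega\|_{L^{n/2}}$. This quantity is \emph{not} controlled by $\|\Omega\|_{L^n}$ (Sobolev embedding goes the other way), so what your argument actually yields is only $\|du\|_{L^n}\lesssim\|\Omega\|_{W^{1,n/2}}$, which is useless for the continuity method since $\|\Omega\|_{W^{1,n/2}}$ is not assumed small. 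A related issue: $\ast d\ast(du-e^{-u}de^u)$ is not purely quadratic in $du$; differentiating $[u,du]$ also produces a $[u,D^2u]$-type piece, whose absorption requires $\|u\|_{L^\infty}$ small. But $\|u\|_{L^\infty}$ is not controlled by $\|du\|_{L^n}$ alone (recall $W^{1,n}\not\hookrightarrow L^\infty$), nor is it small under the hypotheses, since the only available bound is $\|u\|_{L^\infty}\leq C\|u\|_{W^{2,p}}\leq C\|\Omega\|_{W^{1,p}}$.

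The paper resolves both difficulties by abandoning $u$ at the a~priori stage and working directly with $P$ and $\xi$, where $|P|=1$ replaces any need for $\|u\|_{L^\infty}$ control. Testing $\Delta\xi=\ast(dP^{-1}\wedge dP)+\ast d(P^{-1}\Omega P)$ against $d\phi$ and integrating by parts turns the $\Omega$-contribution into $\int P^{-1}\Omega P\wedge d\phi$, so no derivative of $\Omega$ ever appears. The quadratic term $dP^{-1}\wedge dP$ is then handled by the Coifman--Lions--Meyer--Semmes div-curl inequality, which produces the crucial small factor $[P]_{BMO}\leq C\|dP\|_{L^n}$ and allows absorption. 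This use of CLMS (or an equivalent mechanism exploiting the div-curl structure) is the missing ingredient; without it, the scale-invariant estimate~\eqref{thm main:est3} does not close, and the continuity argument cannot propagate beyond the $W^{1,p}$-neighborhood furnished by the IFT.
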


\begin{remark}
In what follows, we write, to keep the notation simple, \mbox{$\Omega \in W^{1,p}$} ($P,\xi \in W^{2,p}$ etc.), instead of  $\Omega \in W^{1,p}(\bb^n, so(m)\otimes \Lambda^1\R^n)$.
\end{remark}

\bigskip

We shall break the proof of Lemma~\ref{thm main-b} into several lemmata. Following Rivi\`ere, we introduce sets
\begin{align*}
V_\epsilon &=\{\Omega \in W^{1,p}&~:~&\|\Omega \|_{L^n}<\epsilon \},\\
U_{\epsilon}&=\{\Omega \in W^{1,p}&~:~&\|\Omega \|_{L^n}<\epsilon \text{ and there exist $P$ and $\xi$ }\\
&&&\text{ satisfying the system \eqref{main} and the estimate \eqref{thm main:est}}\}.
\end{align*}

We show that for $\epsilon>0 $ and sufficiently small the set $U_\epsilon $ is closed and open in $V_\epsilon $, and since the latter is path connected (it is star-shaped in $W^{1,p}$), it follows that $U_\epsilon =V_\epsilon $, which proves Lemma~\ref{thm main-b}.

\begin{lemma}\label{lemma:closedness}
The set $U_\epsilon$ is closed in $V_\epsilon$.
\end{lemma}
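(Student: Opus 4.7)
The plan is to take an arbitrary sequence $\Omega_k\in U_\epsilon$ with $\Omega_k\to \Omega$ in $W^{1,p}$, where $\Omega\in V_\epsilon$, and produce a decomposition of $\Omega$ as a subsequential limit of the decompositions $(P_k,\xi_k)$ associated to the $\Omega_k$. Since $\|\Omega_k\|_{W^{1,p}}$ is bounded, the estimate \eqref{thm main:est1} gives a uniform bound on $\|dP_k\|_{W^{1,p}}+\|d\xi_k\|_{W^{1,p}}$; combined with the boundary conditions $P_k=\id$ and $\xi_k=0$ on $\partial\bb^n$, Poincar\'e's inequality together with Gaffney's inequality \eqref{gaffney} (as used in Section~\ref{sec:forms}) upgrade these to uniform $W^{2,p}$-bounds on $P_k-\id$ and $\xi_k$. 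By reflexivity, we may pass to a subsequence along which $P_k\rightharpoonup P$ and $\xi_k\rightharpoonup \xi$ weakly in $W^{2,p}$.

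The assumption $p>n/2$ is crucial next: it yields the compact embedding $W^{2,p}(\bb^n)\hookrightarrow C^0(\bb^n)$, so in fact $P_k\to P$ uniformly (hence pointwise a.e.), and consequently $P(x)\in SO(m)$ for almost every $x$ and $P_k^{-1}=P_k^T\to P^{-1}$ uniformly. Rellich--Kondrachov furthermore gives $dP_k\to dP$ and $d\xi_k\to d\xi$ strongly in $L^p$, while $\Omega_k\to\Omega$ in $W^{1,p}\hookrightarrow L^p$. These suffice to pass to the limit in
\[
P_k^{-1}dP_k+P_k^{-1}\Omega_k P_k=\ast d\xi_k,
\]
with every product converging in $L^p$; thus the limiting pair $(P,\xi)$ satisfies the first equation of \eqref{main}. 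The remaining constraints---the boundary values $P=\id$ and $\xi=0$ on $\partial\bb^n$ and the co-closedness $d\ast\xi=0$---pass to the limit by continuity of the trace operator and weak convergence.

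It then remains to verify the estimates \eqref{thm main:est} for $(P,\xi)$. They follow by lower semicontinuity of the various norms under weak convergence, combined with $\Omega_k\to\Omega$ in $W^{1,p}$ and, via the embedding $W^{1,p}\hookrightarrow L^n$ (valid precisely because $p>n/2$), also in $L^n$. The hard part in the argument is passing to the limit in the quadratic term $P_k^{-1}\Omega_k P_k$, since it requires strong convergence of $P_k$ in a space that multiplies reasonably with $L^p$; this is exactly what the compact embedding $W^{2,p}\hookrightarrow C^0$ supplies, and it is the one place where the hypothesis $p>n/2$ is genuinely needed. Without it, a different strategy (such as the iteration scheme used by Tao--Tian, and deployed in Section~\ref{sec:MorreySobolev} for the Morrey--Sobolev range $1<p<n/2$) would be required.
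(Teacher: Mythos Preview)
Your proof is correct and follows the same weak-compactness strategy as the paper. The one notable difference is that you invoke the compact embedding $W^{2,p}\hookrightarrow C^0$ (which requires $p>n/2$) to obtain uniform convergence of $P_k$, whereas the paper simply uses $|P_k|=1$ for the uniform $L^\infty$ bound and then strong $L^q$-convergence for every finite $q$ from Rellich--Kondrachov; this works regardless of whether $p>n/2$. Consequently, your closing editorial remark that $p>n/2$ is ``genuinely needed'' in this closedness step is a slight overclaim: with the paper's route the closedness argument goes through unchanged for smaller $p$ (and indeed is repeated almost verbatim in the Morrey--Sobolev case, Lemma~\ref{lemma:closedness2}). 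The restriction $p>n/2$ enters decisively only in the existence/openness step (Lemma~\ref{lemma:existence}), where the Implicit Function Theorem argument needs $W^{2,p}$ to be an algebra.
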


\begin{proof}
Suppose $(\Omega_k)$ is a sequence in $U_\epsilon $, convergent in $W^{1,p}$ to some $\Omega$.
With every $\Omega_k$ we associate $P_k$ and $\xi_k$ that satisfy \eqref{main}:
\begin{equation}\label{thm main:system}
\left\{
\begin{aligned}
P_k^{-1}dP_k+P_k^{-1}\Omega_k P_k&=\ast d\xi_k &&\text{on }\bb^n,\\
d\ast\xi_k&=0 &&\text{on }\bb^n,\\
\xi_k&=0&&\text{on } \partial \bb^n,\\
P_k&=\mathrm{Id} &&\text{on } \partial \bb^n;
\end{aligned}
\right.
\end{equation}
and the estimates \eqref{thm main:est} hold for $P_k$, $\xi_k$ and $\Omega_k$,, in particular
\begin{equation}\label{boundedsequence}
\|d\xi_k\|_{W^{1,p}}+\|dP_k\|_{W^{1,p}}\leq C(n,m) \|\Omega_k\|_{W^{1,p}}.
\end{equation}
The boundary condition on $\xi_k$ and boundedness of $P_k$ (recall that ${|P_k|=1}$) allow us to interpret \eqref{boundedsequence} as boundedness of $P_k$ and $\xi_k$ in $W^{2,p}$, since the sequence $(\Omega_k)$, being convergent, is necessarily bounded in $W^{1,p}$. We can thus assume (after passing to subsequences) that $P_k$ and $\xi_k$ are weakly convergent in $W^{2,p}$ to some $P$ and $\xi$.

Both the boundary condition $\xi|_{\partial\bb}=0$ and the condition $d\ast\xi=0$ are preserved when passing to the weak limit. Moreover, possibly after passing to a subsequence, we have
\begin{align*}
\Omega_k &\rightarrow\Omega &\text{ in }W^{1,p}&&~~\Rightarrow~~&&\Omega_k &\rightarrow\Omega &\text{ in }L^{\frac{np}{n-p}},\\
P_k &\rightharpoonup P &\text{ in }W^{2,p}&&~~\Rightarrow~~&&dP_k &\rightharpoonup dP &\text{ in }L^{\frac{np}{n-p}},\\
\xi_k &\rightharpoonup \xi &\text{ in }W^{2,p}&&~~\Rightarrow~~&&d\xi_k &\rightharpoonup d\xi &\text{ in }L^{\frac{np}{n-p}},
\end{align*}
and for any small $\delta >0$, $\Omega_k$, $dP_k$ and $d\xi_k$  converge strongly (to $\Omega$, $P$ and $\xi$, respectively) in $L^s$, with $s=\frac{np}{n-p}-\delta$, in particular in $L^n$, since $np/(n-p) > n$. Also, $P_k$ are uniformly bounded in $L^{\infty}$ and strongly convergent, by Sobolev embedding theorem, in $L^{q}$ for any $q$. This is enough to show the strong convergence of $\Omega_k P_k$ to $\Omega P$ in $L^s$; altogether, we may pass to the strong limit in $L^s$ in the system \eqref{thm main:system}, showing that the equation
$$
dP+\Omega P=P\ast d\xi \qquad \text{in }\bb^n
$$
is satisfied in the sense of distributions.

The estimates \eqref{thm main:est} for $P$ and $\xi$ are obvious.\\
\end{proof}

\begin{remarknn}
For any $P$ and $\xi$ in $W^{1,p}$ that satisfy \eqref{thm main:system} we have that $d\xi\in W^{1,p}$ implies $P\in W^{2,p}$. Indeed, we have $dP=P\ast d\xi-\Omega P$, and for $p>n/2$ the right hand side is in $W^{1,p}$.
\end{remarknn}

\medskip

Now we proceed to prove the openness of $U_\epsilon$. In contrast with the previous lemma this is more delicate; we split the reasoning again into several lemmata.

\begin{lemma}\label{lemma:existence}
There exists a constant $\kappa =\kappa (n)>0$ such that for any $\zeta\in W^{2,p}(\bb^n,so(m)\otimes \Lambda^{n-2}\R^n)$ with $\|d\zeta\|_{L^n}\leq \kappa $ there exists $\eta >0$ such that for any $\lambda \in W^{1,p}(\bb^n,so(m)\otimes\Lambda^1\R^n)$ with $\|\lambda \|\leq \eta$ the equation
\begin{equation}\label{implicit}
\ast d \ast\left(Q^{-1}dQ+Q^{-1}(\ast d\zeta+\lambda)Q\right)=0
\end{equation}
has a solution $Q=Q(\lambda)\in W^{2,p}(\bb^n,SO(m))$ such that $Q = Id$ on $\partial \bb^n$. Moreover, $Q(\lambda)$ depends continuously on $\lambda$.
\end{lemma}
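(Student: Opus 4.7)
The plan is to apply the Implicit Function Theorem, as already suggested in the introduction. Write $Q = e^u$ with $u$ antisymmetric and $u|_{\partial\bb^n} = 0$ (which encodes $Q|_{\partial\bb^n} = \id$ and $Q \in SO(m)$), and consider
$$
T(u, \lambda) = \ast d \ast\bigl( e^{-u} de^u + e^{-u}(\ast d\zeta + \lambda) e^u \bigr)
$$
as a map $T\colon \mathcal{B} \times W^{1,p} \to L^p$, where $\mathcal{B} = (W^{2,p} \cap W^{1,p}_0)(\bb^n, so(m))$. Since $p > n/2$ we have $W^{2,p} \hookrightarrow C^0$, hence $e^u$ is a bounded matrix field and the composition $u \mapsto e^{\pm u}$ is a smooth map between the relevant Sobolev spaces; together with the fact that pointwise multiplication is continuous on $W^{2,p}\times W^{1,p} \to W^{1,p}$ in this range, this makes $T$ a $C^1$ map. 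At the base point, $T(0,0) = \ast d \ast(\ast d\zeta) = \pm \ast d^2 \zeta = 0$.

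For the IFT to apply I need the partial linearization $H := D_u T(0,0)$ to be a Banach-space isomorphism $\mathcal{B} \to L^p$. Expanding $e^{\varepsilon v} = I + \varepsilon v + O(\varepsilon^2)$ and collecting first-order terms gives
$$
H(v) \;=\; \ast d \ast dv \;+\; \ast d \ast\bigl( (\ast d\zeta) v - v (\ast d\zeta) \bigr) \;=\; \Delta v + \ast[d\zeta, dv],
$$
where the second equality uses $\ast\ast = \pm\id$, the Leibniz rule for $d$ on commutators, and $d(d\zeta) = 0$; this agrees with the formula advertised in the introduction.

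The isomorphism property for $H$ follows by perturbation from the Dirichlet Laplacian. By H\"older's inequality with exponents $n$ and $p^* = np/(n-p)$,
$$
\|\ast[d\zeta, dv]\|_{L^p} \le C \|d\zeta\|_{L^n}\, \|dv\|_{L^{p^*}},
$$
and the Sobolev embedding $W^{1,p}(\bb^n) \hookrightarrow L^{p^*}(\bb^n)$ applied to $dv$, together with the Calder\'on--Zygmund estimate $\|v\|_{W^{2,p}} \le C \|\Delta v\|_{L^p}$ on $\mathcal{B}$, yield
$$
\|H(v) - \Delta v\|_{L^p} \le C(n,p)\, \|d\zeta\|_{L^n}\, \|\Delta v\|_{L^p}.
$$
Since $\Delta\colon \mathcal{B} \to L^p$ is an isomorphism, choosing $\kappa = \kappa(n,p) > 0$ so that $C(n,p)\kappa < 1$ makes $H$ an isomorphism by a Neumann-series argument, for every $\zeta$ with $\|d\zeta\|_{L^n} < \kappa$.

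The IFT then supplies $\eta > 0$ and a continuous map $\lambda \mapsto u_\lambda \in \mathcal{B}$, defined on $\{\|\lambda\|_{W^{1,p}} < \eta\}$, with $u_0 = 0$ and $T(u_\lambda, \lambda) = 0$. Setting $Q(\lambda) = \exp(u_\lambda)$: antisymmetry of $u_\lambda$ ensures $Q(\lambda) \in SO(m)$; the boundary condition $u_\lambda|_{\partial \bb^n} = 0$ gives $Q(\lambda)|_{\partial \bb^n} = \id$; and smoothness of the matrix exponential on $W^{2,p} \cap C^0$ transfers the $W^{2,p}$-regularity and the continuous dependence on $\lambda$ from $u_\lambda$ to $Q(\lambda)$. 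The main obstacle in this plan is the isomorphism statement for $H$: it is precisely there that the smallness of $\|d\zeta\|_{L^n}$ is consumed, and it is also there that the restriction $p > n/2$ becomes essential, since without $W^{2,p} \hookrightarrow C^0$ the operator $T$ would not even be a well-defined smooth map between the chosen Banach spaces.
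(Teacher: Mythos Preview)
Your proof is correct and follows essentially the same route as the paper's: write $Q=e^u$ with $u\in\mathcal{B}=W^{2,p}\cap W^{1,p}_0$, compute the linearization $H(v)=\Delta v+\ast[d\zeta,dv]$, and show $H$ is an isomorphism by treating the commutator term as a small perturbation of the Dirichlet Laplacian. The only cosmetic difference is that the paper proves injectivity and surjectivity of $H$ separately (the latter via an operator $K=\Delta^{-1}(\,\cdot\,)$ with $\|K\|<1$), whereas you package both into a single Neumann-series argument; the underlying estimate is the same.
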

Note that \eqref{implicit} implies, through Poincar\'e's lemma, that the term in parentheses is of the form $\ast d\tilde{\zeta}$, for some antisymmetric $(n-2)$-form $\tilde{\zeta}$. The above Lemma should be understood as follows: Uhlenbeck's decomposition (i.e. $Q$ and $\tilde{\zeta}$) exists if our matrix is a small (in $W^{1,p}$) perturbation of a co-closed form $\ast d\zeta$, provided $\| d\zeta \|_{L^n}$ is sufficiently small.

\begin{proof}
Since we are interested in finding any $Q\in W^{2,p}(\bb^n,SO(m))$ satisfying \eqref{implicit} and the boundary condition, we shall look for one of the form $e^u$, where $u\in \bs =W^{2,p}(\bb^n, so(m))\cap W_o^{1,p}(\bb^n, so(m))$.
We define the operator $$T:\bs\times W^{1,p}(\bb^n, so(m)\otimes\Lambda^1\R^n)\to L^p(\bb^n)$$ by
\begin{equation*}
T(u,\lambda )=\ast d\ast (e^{-u} de^u+e^{-u}(\ast d\zeta+\lambda)e^u).
\end{equation*}
This is a well defined, smooth operator. Using Implicit Function Theorem, we prove that for any sufficiently small $\lambda$ the equation $T(u,\lambda )=0$  has a~solution $u_\lambda $, continuously depending on $\lambda $. To this end we linearize $T$ at $(u,\lambda )=(0,0)$ with respect to the first argument:
\begin{equation*}\label{T linearized}
H(\psi)=\ast d \ast (d\psi+[\ast d\zeta,\psi])=\Delta \psi+\ast[d\zeta,d\psi], \qquad H\colon \bs \to L^p(\bb^n),
\end{equation*}
where the commutator $[\cdot,\cdot]$ denotes a commutator of two $so(m)$ matrices.

We have, by H\"older's inequality
\begin{equation*}
\begin{split}
\|\Delta \psi\|_{L^p}&\leq \|H(\psi)\|_{L^p}+\|[d\zeta,d\psi]\|_{L^p}\\
&\leq \|H(\psi)\|_{L^p}+2\|d\zeta\|_{L^n}\| \|d\psi\|_{L^{np/(n-p)}},
\end{split}
\end{equation*}
thus
\begin{equation*}
\begin{split}
\|H(\psi)\|_{L^p}
&\geq \|\Delta \psi\|_{L^p}-2\|d\zeta\|_{L^n}\| \|d\psi\|_{L^{np/(n-p)}}\\
&\geq \|\Delta \psi\|_{L^p}-2C_S\|d\zeta\|_{L^n}\| \|d\psi\|_{W^{1,p}}\\
&\geq C(\|d\psi\|_{W^{1,p}}-2\kappa \|d\psi\|_{W^{1,p}})\\
&=C(1-2\kappa)\|d\psi\|_{W^{1,p}},
\end{split}
\end{equation*}
where the constant $C_S$ comes from the Sobolev embedding (recall that ${\psi\in \bs}$ is a matrix-valued \emph{function}, i.e. a $0$-form, vanishing at the boundary). Therefore, for $\kappa$ small, $H$ is injective.

Showing surjectivity of $H$ amounts to showing that the system
\begin{equation}\label{H surjective}
H(\psi)=\Delta \psi+\ast[d\zeta,d\psi]=f
\end{equation}
has a solution in $\bs$ for arbitrary $f\in L^p$.

Let us consider an operator $K:\bs\to\bs$, with $K(\psi)$ defined as a solution to the system
\begin{equation}\label{system for K}
\begin{cases}\Delta K(\psi)=-\ast[d\zeta,d\psi]&\text{ in }\bb^n,\\
K(\psi)=0&\text{ on }\partial\bb^n.
\end{cases}
\end{equation}
Using H\"older's and Sobolev's inequalities and the fact that the Newtonian potential $\Delta^{-1}:L^p\to\bs$ is continuous, we get
\begin{equation*}
\begin{split}
\|K(\psi)\|_{\bs}&\leq \vertiii{\Delta^{-1}}_{L^p\to \bs}\, \|\ast [d\zeta,d\psi]\|_{L^p}\\
&\leq 2\vertiii{\Delta^{-1}}_{L^p\to \bs}\,\|d\zeta\|_{L^n}\|d\psi\|_{L^{np/(n-p)}}\\
&\leq 2\vertiii{\Delta^{-1}}\kappa \|\psi\|_{\bs}.
\end{split}
\end{equation*}
For $\kappa$ sufficiently small we can have $\vertiii{K}_{\bs\to\bs}$ small and $Id-K:\bs\to\bs$ invertible.

Let now $\phi=(Id-K)\psi$. If $\psi$ is a solution to \eqref{H surjective}, then
$$
\Delta \phi=\Delta\psi-\Delta K(\psi)=\Delta \psi+\ast[d\zeta,d\psi]=f,
$$
and we can solve \eqref{H surjective} for any $f\in L^p$ by solving the above Poisson equation and applying to its solution the inverse mapping to $Id-K$.

Altogether, $H:\bs \to L^p$ is an isomorphism, and we can apply the Implicit Function Theorem to get $u_\lambda$ as a continuous function of $\lambda$.

To end the proof of the lemma, we take $Q(\lambda)=e^{u_\lambda}$.
\end{proof}

\begin{lemma}\label{a priori}
Suppose $n/2<p<n$. There exists $\kappa=\kappa(p,n)$ such that for any $\Omega\in V_\epsilon $ and $P$, $\xi $ in $W^{2,p}$ satisfying the system \eqref{main} and additionally the estimate
\begin{equation}\label{kappa estimate}
\|dP\|_{L^n}+\|d\xi \|_{L^n}<\kappa
\end{equation}
the estimates \eqref{thm main:est} hold.
\end{lemma}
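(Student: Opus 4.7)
The plan is to derive two Poisson-type equations from \eqref{main}---one for $P$ and one for $\xi$---and close each of the three estimates \eqref{thm main:est2}--\eqref{thm main:est1} by combining $L^q$ elliptic theory, Gaffney's inequality \eqref{gaffney}, Sobolev embedding, and the smallness assumptions $\|\Omega\|_{L^n}<\epsilon$ and $\|dP\|_{L^n}+\|d\xi\|_{L^n}<\kappa$. For the equation on $P$, rewrite the first line of \eqref{main} as $dP = P\ast d\xi - \Omega P$ and apply the codifferential $\delta=\pm\ast d\ast$; the key cancellation $\delta(\ast d\xi)=\pm\ast d(d\xi)=0$ kills the awkward term and yields
\[
\Delta P = \bigl\langle dP,\,\ast d\xi\bigr\rangle + \delta(\Omega P),
\]
where $\langle\cdot,\cdot\rangle$ denotes the entry-wise Euclidean inner product of matrix-valued $1$-forms. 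For the equation on $\xi$, apply $d$ to $\ast d\xi = P^{-1}dP + P^{-1}\Omega P$, use $d(P^{-1}) = -P^{-1}dP\,P^{-1}$, and substitute $P^{-1}dP = \ast d\xi - P^{-1}\Omega P$ to obtain
\[
d\ast d\xi = -\ast d\xi\wedge\ast d\xi + P^{-1}\Omega P\wedge\ast d\xi + P^{-1}d\Omega\cdot P - P^{-1}\Omega\wedge dP;
\]
since $d\ast\xi=0$, $\Delta\xi$ equals (up to sign) $\ast$ of the right-hand side.

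For \eqref{thm main:est3} ($q=n$) and \eqref{thm main:est2} ($q=p$), I would view the Poisson equation for $V:=P-\id$, which vanishes on $\partial\bb^n$, as a weak Dirichlet problem $\Delta V = F$ with $F\in W^{-1,q}$, and apply the standard estimate $\|dV\|_{L^q}\leq C\|F\|_{W^{-1,q}}$ on the ball. Keeping $\delta(\Omega P)$ in divergence form gives $\|\delta(\Omega P)\|_{W^{-1,q}}\leq \|\Omega P\|_{L^q}\leq \|\Omega\|_{L^q}$ directly, \emph{without invoking} $d\Omega$. The bilinear term is handled by testing against $\phi\in W^{1,q'}_0$ and combining H\"older's inequality with the critical Sobolev embedding on $\phi$ (or, in the borderline case $n=2$, Wente's inequality):
\[
\bigl\|\bigl\langle dV,\,\ast d\xi\bigr\rangle\bigr\|_{W^{-1,q}} \leq C\|dV\|_{L^q}\|d\xi\|_{L^n} \leq C\kappa\|dV\|_{L^q},
\]
which can be absorbed into the left-hand side for $\kappa$ small. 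The estimate $\|dP\|_{L^q}\leq C\|\Omega\|_{L^q}$ follows, and $\|d\xi\|_{L^q}\leq \|dP\|_{L^q}+\|\Omega\|_{L^q}$ is then immediate from $\ast d\xi = P^{-1}dP + P^{-1}\Omega P$.

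The $W^{1,p}$ bound \eqref{thm main:est1} I would prove by using the same two equations directly in $L^p$. H\"older's inequality with exponents $(n,\,np/(n-p))$ together with the Sobolev embedding $W^{1,p}\hookrightarrow L^{np/(n-p)}$ (valid since $p<n$) yields
\[
\|\Delta P\|_{L^p} + \|\Delta\xi\|_{L^p} \leq C\|\Omega\|_{W^{1,p}} + C(\epsilon+\kappa)\bigl(\|dP\|_{W^{1,p}}+\|d\xi\|_{W^{1,p}}\bigr).
\]
Gaffney's inequality, applied to $dP$ (whose tangential trace vanishes because $P=\id$ on $\partial\bb^n$) and to $d\xi$ (tangential trace vanishing because $\xi=0$ on $\partial\bb^n$, and $d\ast\xi=0$ so that $\Delta\xi=\pm\ast d\ast d\xi$), bounds each of $\|dP\|_{W^{1,p}}$ and $\|d\xi\|_{W^{1,p}}$ by its $L^p$-norm plus the $L^p$-norm of the corresponding Laplacian. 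Absorbing the small coefficient $C(\epsilon+\kappa)$ and feeding in the previously proved bound \eqref{thm main:est2} completes the argument.

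The step I expect to be most delicate is the $L^n$ bound \eqref{thm main:est3}: a direct estimate of $\Delta P$ in $L^{n/2}$ (so that $dP\in L^n$ via $W^{2,n/2}\hookrightarrow W^{1,n}$) would force the expansion $\delta(\Omega P) = (\delta\Omega)P - \langle\Omega,dP\rangle$ and thus produce an uncontrollable $\|d\Omega\|_{L^{n/2}}$ term. The remedy is to keep $\delta(\Omega P)$ in distributional form and work in $W^{-1,n}$, where the cancellation $\delta(\ast d\xi)=0$ is exactly what is needed to close the estimate in terms of $\|\Omega\|_{L^n}$ alone.
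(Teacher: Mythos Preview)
Your approach is correct and structurally parallel to the paper's, but you organize the first-order estimates differently. For \eqref{thm main:est2} and \eqref{thm main:est3} you work with the Poisson equation for $V=P-\id$ and bound the bilinear term $\ast(dV\wedge d\xi)$ in $W^{-1,q}$ by H\"older's inequality together with the Sobolev embedding $W^{1,q'}_0\hookrightarrow L^{nq'/(n-q')}$ applied to the test function; the paper instead tests the $\xi$-equation and controls the bilinear term $dP^{-1}\wedge dP$ via the Coifman--Lions--Meyer--Semmes div--curl lemma and $\mathcal{H}^1$--$BMO$ duality, pairing it against $[P-\bar P]_{BMO}\leq C\|dP\|_{L^n}$. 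Your route is more elementary when $n\geq 3$ (no Hardy-space input needed), but for $n=2$ and $q=p\in(1,2)$ the Sobolev embedding you invoke is unavailable: the H\"older splitting would require $\phi\in L^r$ with $1/r=1/2-1/p<0$, and classical Wente only covers $q=2$. In that case you must fall back on the same CLMS/$BMO$ mechanism the paper uses---integrate by parts to write $\int\phi\,dV\wedge d\xi=-\int\xi\,dV\wedge d\phi$, use $dV\wedge d\phi\in\mathcal{H}^1$ with norm $\leq C\|dV\|_{L^p}\|d\phi\|_{L^{p'}}$, and pair against $[\xi]_{BMO}\leq C\|d\xi\|_{L^2}$. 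With that amendment your proof closes in all dimensions. The $W^{1,p}$ step \eqref{thm main:est1} is essentially identical in both arguments.
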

\begin{proof}
The lemma follows from rather standard elliptic estimates, but we include them here for the sake of completeness.

We have
\begin{equation}\label{laplasjan xi}
\begin{split}
\Delta \xi&=(\ast d\ast d+d\ast d\ast )\xi=\ast d(\ast d\xi)\\
&=\ast d(P^{-1}dP+P^{-1}\Omega P)\\
&=\ast(dP^{-1}\wedge dP)+\ast d(P^{-1}\Omega P).
\end{split}
\end{equation}
Note that for $q=p/(p-1)$, $\|d\xi\|_{L^p}$ is equivalent to
\begin{equation*}
\sup_{\|d \phi\|_{L^q}\leq 1}\int_{\bb^n} d\xi \cdot d\phi,
\end{equation*}
where $\phi$ is a smooth, compactly supported (in particular with null boundary values)  $(n-2)$-form on $\bb^n$. The inequality
$$
\|d\xi\|_{L^p}
=\sup_{\|\eta\|_{L^q}\leq 1}\int_{\bb^n} d\xi \cdot \eta
\geq \sup_{\|d \phi\|_{L^q}\leq 1}\int_{\bb^n} d\xi \cdot d\phi
$$
is obvious. Applying the Hodge decomposition to the $(n-1)$-form $\eta$, $\eta=d\phi+\psi$ with $\delta \psi=0$, $\|d\phi\|_{L^q}\leq C_q \|\eta\|_{L^q}$, we get
\begin{equation*}\begin{split}
\|d\xi\|_{L^p}&=\sup_{\|\eta\|_{L^q}\leq 1}\int_{\bb^n} d\xi \cdot \eta\leq \sup_{\|\eta\|_{L^q}\leq 1}\left(\int_{\bb^n} d\xi \cdot d\phi + \int_{\bb^n} d\xi \cdot \psi \right)\\
&=C_q \sup_{\|\eta\|_{L^q}\leq 1}\left(\int_{\bb^n} d\xi \cdot d\frac{\phi}{C_q} - \int_{\bb^n} \xi \wedge \delta\psi \right)
\leq C_q \sup_{\|d \tilde{\phi}\|_{L^q}\leq 1}\int_{\bb^n} d\xi \cdot d\tilde{\phi}\,,
\end{split}
\end{equation*}
where $\tilde{\phi}=\phi/C_q$.

Denote by $\bar{P}$ the mean value of $P$ over $\bb^n$: $\bar{P}=\mvint_{\bb^n}P$. For any $\phi$ as above, with $\|d\phi\|_{L^q}\leq 1$,
\begin{equation*}
\begin{split}
\int_{\bb^n}d\xi\cdot d\phi&=-\int \Delta \xi\cdot \phi\\
&=-\int_{\bb^n} \big[ \ast \left(dP^{-1}\wedge d(P-\bar{P})\right) + \ast d \left( P^{-1}\Omega P\right)\big] \cdot \phi\\
&=-\int_{\bb^n} \big[ dP^{-1}(P-\bar{P}) +P^{-1}\Omega P \big] \wedge d\phi \\
&\leq C\|dP^{-1}\|_{L^p} \|P-\bar{P}\|_{BMO} \|d\phi\|_{L^q} +\|P^{-1}\Omega P\|_{L^p}\|d\phi\|_{L^q}\\
&\leq C\|dP\|_{L^p}\|dP\|_{L^n}+\|\Omega \|_{L^p}\\
&\leq \kappa C\|dP\|_{L^p}+\|\Omega \|_{L^p},
\end{split}
\end{equation*}
thus
\begin{equation}\label{a1}
\| d \xi \|_{L^p} \leq C_q (  \kappa C\|dP\|_{L^p}+\|\Omega \|_{L^p} ),
\end{equation}
with the constants $C$ (possibly different in every line) dependent only on $n$ and $p$. Note that, since $P$ is an orthogonal matrix, $|P|=|P^{-1}|=1$ and $|dP^{-1}|=|dP|$. In the estimate above we use, for the first summand, the Coifman-Lions-Meyer-Semmes div-curl inequality (\cite{CLMS}) and later the standard inclusion $W^{1,n}\hookrightarrow BMO$; the second summand is estimated by H\"older's inequality.

On the other hand, taking $L^p$ norms of both sides of the equation
\begin{equation}\label{eq for dP}
dP=P\ast d\xi+ \Omega P
\end{equation}
(c.f \eqref{main}) gives
\begin{equation}\label{a2}
\|dP\|_{L^p}\leq \|d\xi \|_{L^p}+\|\Omega \|_{L^p}.
\end{equation}
Putting \eqref{a1} and \eqref{a2} together we get
\begin{equation*}
\begin{split}
\|d\xi\|_{L^p}+\|dP\|_{L^p}&\leq 2\|d\xi\|_{L^p}+\|\Omega \|_{L^p}\\
&\leq C_1(n,p)\kappa \|dP\|_{L^p}+C_2(n,p)\|\Omega \|_{L^p},
\end{split}
\end{equation*}
and for $\kappa <\frac{1}{C_1}$ this implies that the estimate \eqref{thm main:est2} holds.

The above calculation is valid also for $p=n$, which yields the estimate \eqref{thm main:est3}.

To show the estimate \eqref{thm main:est1}, by taking $\ast d \ast$ of both sides of \eqref{eq for dP}, we see that
\begin{equation*}
\begin{split}
\Delta P&=\ast d\ast d P=\ast d\ast (P\ast d\xi+ \Omega P)\\
&=\ast(dP\wedge d\xi)+\ast d \ast (\Omega P),
\end{split}
\end{equation*}
thus
\begin{equation}\label{dP in sobolev}
\begin{split}
\|dP\|_{W^{1,p}}&\leq C \big( \|dP\|_{L^p}+ \|\Delta P\|_{L^p}\big) \\
&\leq C \big( \|dP\|_{L^p}+\|dP\wedge d\xi\|_{L^p}+\|d\Omega P\|_{L^p}+\|\Omega \wedge dP\|_{L^p} \big)\\
&\leq C \big( \|dP\|_{L^p}+\|dP\|_{L^n}\|d\xi\|_{L^{np/(n-p)}}+\|d\Omega \|_{L^p}+\|\Omega\|_{L^{np/(n-p)}}\|dP\|_{L^n} \big) \\
&\leq C\big( \|dP\|_{L^p}+\kappa \|d\xi\|_{W^{1,p}}+(1+\kappa)\|\Omega \|_{W^{1,p}} \big).
\end{split}
\end{equation}
The constant $C$ comes from Gaffney's inequality \eqref{gaffney}, standard elliptic estimates and the Sobolev embedding $W^{1,p}\hookrightarrow L^{np/(n-p)}$, thus it depends only on $p$ and $n$.

Similarly, using \eqref{laplasjan xi},
 \begin{equation}\label{dxi in sobolev}
\begin{split}
\|d\xi\|_{W^{1,p}}&\leq C \big( \|d\xi\|_{L^p}+\|\Delta \xi\|_{L^p}\big) \\
&= C\big( \|d\xi\|_{L^p}+\| \ast ( dP^{-1}\wedge dP)+\ast d(P^{-1}\Omega P)\|_{L^p} \big)\\
&\leq C\big( \|d\xi\|_{L^p}+\|dP\|_{L^n}\|dP\|_{L^{np/(n-p)}}+\|dP\|_{L^n}\|\Omega P\|_{L^{np/(n-p)}} \\
&\qquad +\|P^{-1}d\Omega P\|_{L^p}+\|P^{-1}\Omega \|_{L^{np/(n-p)}}\|dP\|_{L^n} \big)\\
&\leq C \big( \|d\xi\|_{L^p}+\kappa \|dP\|_{W^{1,p}}+(\kappa+1)\|\Omega\|_{W^{1,p}} \big).
\end{split}
\end{equation}
Note that, as pointed out in Section \ref{sec:forms}, the full Sobolev norms of $dP$ and $d\xi$ can be estimated with the norms of Laplacians of $P$ and $\xi$, thanks to the boundary conditions they satisfy.

Composing \eqref{dP in sobolev} and \eqref{dxi in sobolev} with the already proved estimate \eqref{thm main:est2} we get
$$
\|dP\|_{W^{1,p}}+\|d\xi\|_{W^{1,p}}
\leq C\big(
\kappa \|dP\|_{W^{1,p}}+\kappa  \|d\xi\|_{W^{1,p}}+ (\kappa +1)\|\Omega\|_{W^{1,p}} \big),
$$
which, for $\kappa$ sufficiently small, yields the estimate \eqref{thm main:est1}.
\end{proof}

\begin{lemma}\label{lemma:openness}
The set $U_\epsilon$ is, for $\epsilon $ sufficiently small, open in $V_\epsilon $.
\end{lemma}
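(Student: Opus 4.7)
The plan is to show that for every $\Omega_0\in U_\epsilon$ some $W^{1,p}$-neighbourhood of $\Omega_0$ is already contained in $U_\epsilon$, provided $\epsilon$ is small enough. Let $(P_0,\xi_0)$ be the decomposition associated to $\Omega_0$, and let $\Omega\in V_\epsilon$ be close to $\Omega_0$ in $W^{1,p}$. I look for the new decomposition in the multiplicative form $P=P_0Q$, where $Q|_{\partial\bb^n}=\id$ is close to the identity. The gauge change formula gives
$$
P^{-1}dP+P^{-1}\Omega P \;=\; Q^{-1}dQ+Q^{-1}\bigl(P_0^{-1}dP_0+P_0^{-1}\Omega P_0\bigr)Q,
$$
and substituting $P_0^{-1}dP_0+P_0^{-1}\Omega_0 P_0=\ast d\xi_0$ this becomes
$$
P^{-1}dP+P^{-1}\Omega P \;=\; Q^{-1}dQ+Q^{-1}\bigl(\ast d\xi_0+\lambda\bigr)Q,\qquad \lambda:=P_0^{-1}(\Omega-\Omega_0)P_0.
$$
So the Coulomb condition $\ast d\ast(P^{-1}dP+P^{-1}\Omega P)=0$ is exactly the equation \eqref{implicit} with $\zeta=\xi_0$ and perturbation $\lambda$.

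Next I verify the hypotheses of Lemma~\ref{lemma:existence}. Since $\Omega_0\in U_\epsilon$, estimate \eqref{thm main:est3} gives $\|d\xi_0\|_{L^n}\le C(n,m)\,\epsilon$; fixing $\epsilon$ so that $C(n,m)\epsilon\le \kappa(n)$ secures the smallness of $\zeta=\xi_0$ required by the lemma. Because $|P_0|\le 1$, $P_0\in W^{2,p}\subset L^\infty\cap W^{1,n}$, and $W^{1,p}$ is stable under multiplication by such factors in our range $p>n/2$, one gets $\|\lambda\|_{W^{1,p}}\le C\|\Omega-\Omega_0\|_{W^{1,p}}$, which is below the threshold $\eta$ of Lemma~\ref{lemma:existence} once $\Omega$ is sufficiently close to $\Omega_0$. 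The lemma then produces $Q=Q(\lambda)\in W^{2,p}(\bb^n,SO(m))$ with $Q|_{\partial\bb^n}=\id$, depending continuously on $\lambda$; in particular $Q\to\id$ in $W^{2,p}$ as $\lambda\to 0$.

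Having $P:=P_0Q$, the 1-form $P^{-1}dP+P^{-1}\Omega P$ is co-closed on $\bb^n$, so by the Hodge decomposition on the ball (equivalently, solve the Dirichlet problem $\Delta\xi=\pm\ast d(P^{-1}dP+P^{-1}\Omega P)$ with $\xi|_{\partial\bb^n}=0$ and use co-closedness of the right-hand side to deduce $\delta\xi=0$ and $\ast d\xi=P^{-1}dP+P^{-1}\Omega P$) there exists an $(n-2)$-form $\xi\in W^{2,p}\cap W_0^{1,p}$ completing the system \eqref{main}. The required regularity of $\xi$ is inherited from $dP$ and $\Omega$ via standard elliptic estimates together with Gaffney's inequality \eqref{gaffney}.

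Finally I apply Lemma~\ref{a priori} to upgrade the construction to the full estimates \eqref{thm main:est}. For this I must check the smallness \eqref{kappa estimate}. From $dP=(dP_0)Q+P_0\,dQ$ and the explicit formula for $\ast d\xi$ above,
$$
\|dP\|_{L^n}+\|d\xi\|_{L^n}\le C\bigl(\|dP_0\|_{L^n}+\|d\xi_0\|_{L^n}+\|dQ\|_{L^n}+\|\lambda\|_{L^n}\bigr)\le C\epsilon+C\|\Omega-\Omega_0\|_{W^{1,p}},
$$
where I used estimate \eqref{thm main:est3} for $(P_0,\xi_0,\Omega_0)$ and continuity of $Q$ in $\lambda$; shrinking $\epsilon$ and the neighbourhood of $\Omega_0$ makes this smaller than the threshold of Lemma~\ref{a priori}. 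The main obstacle is this last bookkeeping of constants: the three smallness thresholds (the $\kappa$ of Lemma~\ref{lemma:existence}, the $\kappa$ of Lemma~\ref{a priori}, and the radius $\epsilon$) must be chosen consistently and \emph{a priori}, i.e.\ independently of the particular $\Omega_0\in U_\epsilon$, so that every $\Omega_0\in U_\epsilon$ admits the same perturbation radius; this is possible because all constants depend only on $n,m,p$.
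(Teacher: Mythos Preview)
Your proof is correct and follows essentially the same route as the paper: set $P=P_0Q$, apply Lemma~\ref{lemma:existence} with $\zeta=\xi_0$ and $\lambda=P_0^{-1}(\Omega-\Omega_0)P_0$ to obtain $Q$, recover $\xi$ via the Hodge decomposition, and then invoke Lemma~\ref{a priori} after checking the $L^n$-smallness of $dP$ and $d\xi$. The only cosmetic difference is in the final smallness check, where you bound $\|dP\|_{L^n}+\|d\xi\|_{L^n}$ directly through $\|dQ\|_{L^n}$ and $\|\lambda\|_{L^n}$, while the paper does it through the differences $d(P-P_0)$ and $d(\xi-\xi_0)$; the two are equivalent.
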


\begin{proof}

Choose $\Omega_o\in U_\epsilon$ and let $P_o$ and $\xi_o$ be the orthogonal transformation and antisymmetric $(n-2)$-form associated with $\Omega_o$, so that Theorem \ref{thm main} holds for $\Omega_o$, $P_o$ and $\xi_o$.

Take now $\Omega\in V_\epsilon$ close to $\Omega_o$ in $W^{1,p}$: we ask that for $\lambda =P_o^{-1}(\Omega -\Omega_o)P_o$ we have $\|\lambda \|_{W^{1,p}}<\eta $ (the conjugation with $P_o\in W^{2,p}$ is continuous in $W^{1,p}$). Applying Lemma \ref{lemma:existence} with $\zeta =\xi_o$, we find $Q\in W^{2,p}(\bb^n,SO(m))$ such that
\begin{equation}\label{open:1}
\ast d \ast\Big(Q^{-1}dQ+Q^{-1}(\ast d\xi_o+P_o^{-1}(\Omega -\Omega_o)P_o)Q\Big)=0.
\end{equation}
Setting $P=P_o Q$ we see that \eqref{open:1} reduces to
$$
\ast d \ast (P^{-1}dP+P^{-1}\Omega  P)=0.
$$
By Poincar\'e's Lemma, this implies that $P^{-1}dP+P^{-1}\Omega  P$  is a coexact form, i.e. there exists an antisymmetric  $(n-2)$-form $\xi$ such that
\begin{equation}\label{last}
\ast d\xi=P^{-1}dP+P^{-1}\Omega  P,
\end{equation}
thus $P$ and $\xi$ give Uhlenbeck's decomposition of $\Omega$.

Note that $Q$ and $P_o \in W^{2,p}\cap L^\infty$ imply that $P\in W^{2,p}$.
By the Hodge decomposition theorem we can choose $\xi$ to be coclosed ($d\ast\xi=0$ on $\bb^n$) and to have zero boundary values ($\xi|_{\partial \bb^n}=0$). Finally, the right hand side of \eqref{last} is in $W^{1,p}$, which gives $\xi\in W^{2,p}$.

What remains to prove is that $P$, $\xi$ and $\Omega $ satisfy the estimates \eqref{thm main:est}. Observe that if $\|\Omega -\Omega_o\|_{W^{1,p}}$ is small enough, then by continuity of the mapping $\lambda \mapsto u_\lambda$ so is  $\|P-P_o\|_{W^{1,p}}$ and $\|\xi-\xi_o\|_{W^{1,p}}$; choosing $\eta$ (which measures the distance $\|\Omega -\Omega_o\|_{W^{1,p}}$) sufficiently small we may have
$$
\|P-P_o\|_{W^{1,p}}+\|\xi-\xi_o\|_{W^{1,p}}<\epsilon.
$$
We also know that
$$
\|d\xi_o\|_{L^n}+\|dP_o\|_{L^n}\leq C\|\Omega_o\|_{L^n}\leq C\epsilon
$$
(this follows from $\Omega_o\in U_\epsilon$).

Taking $\epsilon$ sufficiently small, we may ensure that
\begin{equation*}
\begin{split}
\|d\xi\|_{L^n}+\|dP\|_{L^n}&\leq\|d\xi-d\xi_o\|_{L^n}+\|P-P_o\|_{L^n}+\|d\xi_o\|_{L^n}+\|dP_o\|_{L^n}\\
&<(C+1)\epsilon<\kappa,
\end{split}
\end{equation*}
with $\kappa$ as in Lemma \ref{a priori}. Applying this lemma we show that the estimates \eqref{thm main:est} hold.

Altogether, $\Omega\in U_\epsilon$, which proves the openness of $U_\epsilon$.

\end{proof}

\begin{proof}[Proof of Lemma \ref{thm main-b}]
Since, by Lemmata \ref{lemma:closedness} and \ref{lemma:openness}, for $\epsilon $ sufficiently small the set $U_\epsilon $ is closed and open in $V_\epsilon $, and since the latter is path connected (it is star-shaped in $W^{1,p}$), it follows that $U_\epsilon =V_\epsilon $, which proves Lemma \ref{thm main-b}.

\end{proof}

\medskip

It is worth noting that, for $\Omega \in W^{1,n/2}$, the proof of existence of the decomposition, i.e. Lemma \ref{lemma:existence}, fails. However, we can proceed by a standard density argument: approximate $\Omega$ in $W^{1, {n/2}}$ with $\Omega_k$ in $W^{1,p}$ for $p > n/2$ and argue as in Lemma \ref{lemma:closedness} (see also the proof of Theorem \ref{u-m}), obtaining
\begin{corollary}\label{corr}
Let $\Omega \in W^{1,n/2}$. There exists $\epsilon >0$ such that if $\Omega$ is an antisymmetric matrix of $1$-differential forms on $\bb^n$ such that
$$
\|\Omega \|_{L^n}<\epsilon
$$
then there exist $P\in W^{2,n/2}(\bb^n,SO(m))$ and $\xi \in W^{2,n/2}(\bb^n, so(m)\otimes \Lambda^{n-2}\R^n)$ satisfying the system
\begin{equation*}\label{main:corr}
\left\{
\begin{aligned}
P^{-1}dP+P^{-1}\Omega P&=\ast d\xi &&\text{on }\bb^n,\\
d\ast\xi&=0 &&\text{on }\bb^n,\\
\xi&=0 &&\text{on } \partial \bb^n, \\
P&=\mathrm{Id} &&\text{on } \partial \bb^n;
\end{aligned}
\right.
\end{equation*}
and such that
\begin{subequations}\label{corr:est}
\begin{align*}
&\|d\xi \|_{W^{1,n/2}}+\|dP\|_{W^{1,n/2}}\leq C(n,m)\|\Omega \|_{W^{1,n/2}},\\
&\|d\xi \|_{L^n}+\|dP\|_{L^n}\leq C(n,m)\|\Omega \|_{L^n} < C \epsilon.
\end{align*}
\end{subequations}
\end{corollary}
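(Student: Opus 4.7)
The existence step (Lemma~\ref{lemma:existence}) breaks down at $p=n/2$, so I would follow the hint and build the decomposition by approximation, repeating the strategy of Lemma~\ref{lemma:closedness}. The plan is to approximate $\Omega$ in $W^{1,n/2}$ by smoother $\Omega_k$, apply Lemma~\ref{thm main-b} at a supercritical exponent to each $\Omega_k$, and extract a weak limit --- relying on the fact that the a~priori estimates of Lemma~\ref{a priori} survive at the endpoint $p=n/2$ with a constant depending only on $n$ and $m$.

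First, I would pick $\Omega_k \in C^\infty(\bb^n, so(m)\otimes\Lambda^1\R^n)$ with $\Omega_k\to\Omega$ in $W^{1,n/2}$; since $W^{1,n/2}\hookrightarrow L^n$ continuously, $\|\Omega_k\|_{L^n}\to\|\Omega\|_{L^n}<\epsilon$, so the smallness hypothesis holds for large $k$. Fixing any $p\in(n/2,n)$, Lemma~\ref{thm main-b} yields $P_k \in W^{2,p}(\bb^n,SO(m))$ and $\xi_k \in W^{2,p}\cap W^{1,p}_0$ solving \eqref{main} with $\Omega$ replaced by $\Omega_k$.

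The key observation is that, although existence was secured at $p>n/2$, the a~priori argument of Lemma~\ref{a priori} applies to the pair $(P_k,\xi_k)$ at the endpoint exponent $n/2$ as well: every ingredient used there (Gaffney's inequality, the div-curl estimate of \cite{CLMS}, and the Sobolev embedding $W^{1,p}\hookrightarrow L^{np/(n-p)}$) remains valid at $p=n/2$, the last one collapsing to the borderline embedding $W^{1,n/2}\hookrightarrow L^n$. This gives the uniform bounds
\[
\|d\xi_k\|_{L^n}+\|dP_k\|_{L^n}\leq C(n,m)\|\Omega_k\|_{L^n},\qquad
\|d\xi_k\|_{W^{1,n/2}}+\|dP_k\|_{W^{1,n/2}}\leq C(n,m)\|\Omega_k\|_{W^{1,n/2}}.
\]
Using $|P_k|\equiv 1$, the boundary condition $\xi_k|_{\partial\bb^n}=0$, and Gaffney's inequality, these upgrade to uniform boundedness of $(P_k)$ and $(\xi_k)$ in $W^{2,n/2}$.

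Finally, passing to a subsequence, $P_k\rightharpoonup P$ and $\xi_k\rightharpoonup\xi$ in $W^{2,n/2}$, with traces preserved, while by Rellich--Kondrachov $P_k\to P$ and $\xi_k\to\xi$ strongly in $W^{1,q}$ and in $L^q$ for every $q<n$; meanwhile $\Omega_k\to\Omega$ strongly in $L^n$. Combined with the uniform $L^\infty$-bound on $P_k$, these convergences let me pass to the limit in the bilinear terms $P_k^{-1}\Omega_k P_k$ and $P_k^{-1}dP_k$ in $L^s$ for some $s<n/2$, exactly as in Lemma~\ref{lemma:closedness}, so $(P,\xi)$ satisfies the first equation of \eqref{main} in the distributional sense; the co-closedness $d\ast\xi=0$ and the remaining boundary conditions are preserved in the weak limit, and the claimed $L^n$ and $W^{1,n/2}$ estimates follow from lower semicontinuity of norms. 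The delicate point in the whole argument is handling the products in the borderline regime --- the embedding $W^{2,n/2}\hookrightarrow W^{1,n}$ is continuous but not compact --- but the compactness of $W^{2,n/2}\hookrightarrow W^{1,q}$ for every $q<n$, together with the strong $L^n$-convergence of $\Omega_k$, is sufficient to identify each product in the limit. Everything else is a verbatim repetition of Lemma~\ref{lemma:closedness}.
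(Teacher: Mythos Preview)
Your argument is correct and follows exactly the approach sketched in the paper: approximate $\Omega$ in $W^{1,n/2}$ by smoother $\Omega_k$, apply Lemma~\ref{thm main-b} at a supercritical exponent, observe that the a~priori estimates of Lemma~\ref{a priori} (specifically \eqref{thm main:est3} and \eqref{thm main:est1}) remain valid at $p=n/2$ and thus give uniform $W^{2,n/2}$ bounds, and then pass to the limit as in Lemma~\ref{lemma:closedness}. The paper merely states this in one sentence; you have correctly spelled out the details, including the use of the borderline Sobolev embedding and the compactness needed to identify the bilinear terms.
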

Lemma~\ref{thm main-b} and Corollary~\ref{corr} together yield Theorem~\ref{thm main}.


\section{Uhlenbeck's decomposition, the case $1< p < n/2$} \label{sec:MorreySobolev}
In this section we prove Theorem~\ref{u-m}.
\begin{theorem*}
Assume $1< p < n/2$. Let
$$
\Omega : \bb^n \to so(m)\otimes \Lambda^1\R^n
$$
be an antisymmetric matrix of $1$-differential forms on $\bb^n$. Assume
$$
\Omega \in L^{2p,n-2p} \quad \text{and} \quad d\Omega \in L^{p,n-2p}
$$
There exists $\epsilon >0$ such that if $\Omega$ satisfies the smallness condition
$$
\|\Omega \|_{L^{2p,n-2p}}<\epsilon
$$
then there exist $P\in W^{2,p}(\bb^n,SO(m))$ and $\xi \in W^{2,p}(\bb^n, so(m)\otimes \Lambda^{n-2}\R^n)$ satisfying the system
\begin{equation}\label{main2}
\left\{
\begin{aligned}
P^{-1}dP+P^{-1}\Omega P&=\ast d\xi &&\text{on }\bb^n,\\
d\ast\xi&=0 &&\text{on }\bb^n,\\
\xi&=0 &&\text{on } \partial \bb^n, \\
P&=\mathrm{Id} &&\text{on } \partial \bb^n.
\end{aligned}
\right.
\end{equation}
Moreover $P, \xi \in L^{p, n-2p}_2$ with
\begin{subequations}\label{u-m:est}
\begin{align}
\|d\xi \|_{L^{p, n-p}}+\|dP\|_{L^{p,n-p}}& \leq  C(n,m)\|\Omega \|_{L^{2p, n-2p}} \label{u-m:est1}\\
\|\Delta \xi \|_{L^{p, n-2p}}+\| \Delta P\|_{L^{p,n-2p}}& \leq
C(n,m)( \|\Omega \|_{L^{2p, n-2p}} + \|d\Omega \|_{L^{p, n-2p}}). \label{u-m:est2}
\end{align}
\end{subequations}
\end{theorem*}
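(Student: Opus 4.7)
The plan is to use the Tao--Tian iteration scheme alluded to in the introduction. In contrast to the implicit function theorem approach of Section~\ref{sec:Sobolev}, which breaks down when $1<p<n/2$, I construct $P$ directly via a contraction: set $P = e^u$ with $u \in W^{2,p}_0(\bb^n, so(m))$, and rewrite the target equation $\ast d \ast(P^{-1}dP + P^{-1}\Omega P) = 0$ as
$$
\Delta u = \ast d \ast F(u, \Omega), \qquad F(u, \Omega) := du - e^{-u}de^u - e^{-u}\Omega e^u.
$$
Then iterate: $u^0 = 0$ and $u^{k+1}$ the unique $W^{2,p}_0$ solution of $\Delta u^{k+1} = \ast d \ast F(u^k,\Omega)$. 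The natural functional framework for the contraction is the closed ball
$$
\mathcal{B}_R = \bigl\{u \in W^{2,p}_0(\bb^n, so(m)) \cap L_2^{p,n-2p} : \|u\|_{L_2^{p,n-2p}} \leq R\bigr\}
$$
for $R$ small (depending on $\epsilon$).

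The a priori estimates rely on Calder\'on--Zygmund regularity for $\Delta$ with Dirichlet data in Morrey spaces,
$$
\|du^{k+1}\|_{L^{p,n-p}} \leq C\|F(u^k,\Omega)\|_{L^{p,n-p}}, \qquad
\|\nabla^2 u^{k+1}\|_{L^{p,n-2p}} \leq C\|\Delta u^{k+1}\|_{L^{p,n-2p}},
$$
combined with the pointwise expansions, valid for skew-symmetric $u$,
$$
du - e^{-u}de^u = \tfrac12[u, du] + O(|u|^2|du|), \qquad e^{-u}\Omega e^u = \Omega + [\Omega, u] + O(|u|^2|\Omega|),
$$
and the uniform bound $|e^{\pm u}| = 1$. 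The expansion of $\ast d\ast F(u^k,\Omega)$ produces terms of the types $|u^k||\nabla^2 u^k|$, $|du^k|^2$, $|du^k||\Omega|$, $|u^k||d\Omega|$, $|\Omega|^2$ and higher, each of which is controlled in $L^{p,n-2p}$ using Lemma~\ref{Morrey product} and its consequence~\eqref{kwadrat w Morreyach}, the Morrey embedding $L^{2p,n-2p}\hookrightarrow L^{p,n-p}$, and $L_2^{p,n-2p}\hookrightarrow BMO$ via~\eqref{bmo morrey comp}. The upshot is a schematic bound of the form
$$
\|u^{k+1}\|_{L_2^{p,n-2p}} \leq C\bigl((R+\epsilon)\,\|u^k\|_{L_2^{p,n-2p}} + \|\Omega\|_{L^{2p,n-2p}} + \|d\Omega\|_{L^{p,n-2p}}\bigr),
$$
together with an analogous contraction estimate for $u^{k+1}-u^k$ in terms of $u^k-u^{k-1}$.

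For $\epsilon$ and $R$ chosen sufficiently small, the iteration then maps $\mathcal{B}_R$ into itself and is a contraction, producing a fixed point $u$ and hence $P = e^u \in W^{2,p}(\bb^n, SO(m))$ satisfying $\ast d\ast(P^{-1}dP + P^{-1}\Omega P) = 0$. Poincar\'e's lemma supplies an antisymmetric $(n-2)$-form $\xi$ with $P^{-1}dP + P^{-1}\Omega P = \ast d\xi$, and by the Hodge decomposition on $\bb^n$ (cf.\ Section~\ref{sec:forms}) we may additionally choose $\xi$ with $d\ast\xi = 0$ and $\xi|_{\partial \bb^n}=0$. The estimates~\eqref{u-m:est1}--\eqref{u-m:est2} for $P$ and $\xi$ follow from the fixed-point bounds together with $|P|=1$ and the identities
$$
\Delta P = \ast(dP \wedge d\xi) + \ast d\ast(\Omega P), \qquad \Delta \xi = \ast(dP^{-1}\wedge dP) + \ast d(P^{-1}\Omega P),
$$
estimated termwise by Lemma~\ref{Morrey product} after absorbing the quadratic terms using the smallness of $R$. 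The main obstacle is the bookkeeping of the nonlinear estimates: one must verify that every term in the expansion of $\ast d\ast F(u^k,\Omega)$ factors as a product of a small factor (a norm bounded by $R+\epsilon$) and a factor bounded by the iterate's full $L_2^{p,n-2p}$ norm, and in particular that the only derivative of $\Omega$ appearing is $d\Omega$ --- which is precisely what justifies the otherwise asymmetric hypotheses $\Omega \in L^{2p,n-2p}$ and $d\Omega \in L^{p,n-2p}$ of the theorem.
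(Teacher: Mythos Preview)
Your iteration is set up in $L_2^{p,n-2p}$, but this space does not embed into $L^\infty$: by \eqref{bmo morrey comp} you only obtain $u\in BMO$, and $BMO$ functions may be unbounded. Without a pointwise bound on $|u^k|$ the expansions $du-e^{-u}de^u=\tfrac12[u,du]+O(|u|^2|du|)$ are useless for Morrey estimates, and in particular the term $|u^k|\,|\nabla^2 u^k|$ cannot be placed in $L^{p,n-2p}$ (Lemma~\ref{Morrey product} does not apply here, since one factor is $u^k$ itself, not a gradient). This is precisely why the paper introduces an auxiliary parameter $\alpha>0$: in $L_2^{p,n-2p+\alpha}$ the Morrey--Sobolev embedding of Proposition~\ref{morrey-sobolev} gives $u\in C^{0,\gamma}\subset L^\infty$, and only then can the Tao--Tian iteration (Lemma~\ref{lemma:existence2}) be closed. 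The endpoint $\alpha=0$ is recovered afterwards by approximating $\Omega$ with smooth $\Omega_k$ whose $L^{2p,n-2p}$ norms are uniformly controlled (smooth functions are not dense in Morrey spaces, so one needs the $\alpha$-independent estimate \eqref{thm main:est1 mr}) and passing to the limit.

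There is a second, independent problem. With $u^0=0$ your first iterate satisfies $\Delta u^1=-\ast d\ast\Omega$, and landing back in $\mathcal{B}_R$ forces $R\gtrsim\|u^1\|_{L_2^{p,n-2p}}$; but the hypotheses control only $d\Omega$, not $\ast d\ast\Omega$, and in any case $\|d\Omega\|_{L^{p,n-2p}}$ is not assumed small, so your displayed self-map bound cannot hold with $C(R+\epsilon)<1$. The paper avoids this by running the iteration only for a \emph{small perturbation} $\lambda$ of a fixed co-closed form $\ast d\zeta$ (Lemma~\ref{lemma:existence2}), and then combining this local existence with the a~priori estimates of Lemma~\ref{a priori2} in an open--closed connectedness argument on $V_\epsilon^\alpha$; it is this continuity method, not the iteration alone, that allows the smallness assumption to be placed solely on $\|\Omega\|_{L^{2p,n-2p}}$.
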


\medskip

\begin{remarknn}
Observe that for $p=n/2$, by the Sobolev Embedding Theorem, we have automatically $\Omega \in L^n$. The Morrey space $L^{2p, n-2p}$ equals $L^n$ in this case and the smallness condition for the norm of $\Omega$ agrees with the one in Theorem~\ref{thm main}.
\end{remarknn}

As in Section~\ref{sec:Sobolev}, we shall break the proof of Theorem \ref{u-m} into several lemmata.
The proof of the existence of $P$ and $\xi$ (Lemma \ref{lemma:existence}) cannot be adapted to the present situation.
To avoid this difficulty, we first prove the theorem under more stringent regularity assumptions (see Lemma \ref{thm main mr} below). To prove the existence of the elements of decomposition we follow the strategy of Tao and Tian \cite{TaoTian}.
At a certain moment of the proof (Lemma \ref{lemma:existence2}) we use the fact that due to the Morrey-Sobolev embedding (Proposition \ref{morrey-sobolev}), for $\alpha > 0$,
$$
L_1^{p,n-p+\alpha} \hookrightarrow C^0.
$$
This is not true for $L_1^{p,n-p}$. Also, as pointed out in \cite{zorko}, continuous functions are not dense in~$L^{p,s}$.

\begin{lemma}\label{thm main mr}
Let $1< p < n/2$. There exists $\epsilon >0$ such that for every $\alpha > 0$ and for every
$$
\Omega \in L^{2p,n-2p+\alpha} \quad \text{such that} \quad d\Omega \in L^{p,n-2p},
$$
if $\Omega$ satisfies the smallness condition
$$
\|\Omega \|_{L^{2p,n-2p}}<\epsilon
$$
then there exist $P, \xi  \in L^{p, n-2p+\alpha}_2$ satisfying the system \eqref{main2}
and the estimates
\begin{subequations}\label{thm main:est mr}
\begin{align}
\|d\xi \|_{L^{p, n-p}}+\|dP\|_{L^{p,n-p}} &\leq  C(n,m)\|\Omega \|_{L^{2p, n-2p}} <C \epsilon \label{thm main:est1 mr},\\
\|d\xi \|_{L^{p, n-p+\alpha}}+\|dP\|_{L^{p,n-p+\alpha}} &\leq  C(n,m)\|\Omega \|_{L^{2p, n-2p+\alpha}} \label{thm main:est2 mr},\\
\|\Delta \xi \|_{L^{p, n-2p}}+\| \Delta P\|_{L^{p,n-2p}} &\leq
C(n,m)\left( \|\Omega \|_{L^{2p, n-2p}}\right.\label{thm main:est3 mr}\\\notag
&\qquad \qquad \quad + \left.\|d \Omega \|_{L^{p, n-2p}}\right).
\end{align}
\end{subequations}
\end{lemma}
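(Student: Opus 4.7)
The plan is to follow the Tao--Tian iteration scheme announced in the introduction, since the Implicit Function Theorem approach of Lemma~\ref{lemma:existence} breaks down for $p<n/2$. I would seek $P$ of the form $e^u$ with $u\in L_2^{p,n-2p+\alpha}(\bb^n,so(m))$ and $u|_{\partial\bb^n}=0$. Adding and subtracting $\Delta u=\ast d\ast du$, the equation $\ast d\ast(P^{-1}dP+P^{-1}\Omega P)=0$ is rewritten as
$$\Delta u=\ast d\ast F(u,\Omega),\qquad F(u,\Omega):=du-e^{-u}de^u-e^{-u}\Omega e^u,$$
where $F$ is linear in $\Omega$ with $F(0,\Omega)=-\Omega$, and all remaining terms are products of $u$ with either $du$ or $\Omega$. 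I would then set $u^0=0$ and define $u^{k+1}$ inductively as the solution of $\Delta u^{k+1}=\ast d\ast F(u^k,\Omega)$ with zero Dirichlet data.

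The heart of the proof is a set of a~priori Morrey--Sobolev estimates for the iterates, obtained by induction on $k$. Since $\ast d\ast$ is first-order, Calder\'on--Zygmund theory on Morrey spaces gives $\|du^{k+1}\|_{L^{p,n-p+\beta}}\leq C\|F(u^k,\Omega)\|_{L^{p,n-p+\beta}}$ for $\beta\in\{0,\alpha\}$. The product estimate Lemma~\ref{Morrey product} and its corollary \eqref{kwadrat w Morreyach} then control the quadratic terms like $\|u^k\,du^k\|_{L^{p,n-p+\beta}}$ and $\|u^k\,\Omega\|_{L^{p,n-p+\beta}}$. Crucially, the Morrey--Sobolev embedding of Proposition~\ref{morrey-sobolev} applied to $u^k$, with $du^k\in L^{p,n-p+\alpha}$, yields a uniform $L^\infty$ bound on $u^k$; this makes $e^{\pm u^k}$ well-defined and uniformly bounded. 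Together with the smallness $\|\Omega\|_{L^{2p,n-2p}}<\epsilon$, these ingredients give inductively the uniform bounds $\|du^k\|_{L^{p,n-p}}\leq C\|\Omega\|_{L^{2p,n-2p}}$ and $\|du^k\|_{L^{p,n-p+\alpha}}\leq C\|\Omega\|_{L^{2p,n-2p+\alpha}}$, along with a bound on $\|D^2u^k\|_{L^{p,n-2p}}$ in terms of $\|\Omega\|_{L^{2p,n-2p}}+\|d\Omega\|_{L^{p,n-2p}}$.

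For convergence, I would estimate $u^{k+1}-u^k$ in $L_2^{p,n-2p}$ via the difference equation $\Delta(u^{k+1}-u^k)=\ast d\ast[F(u^k,\Omega)-F(u^{k-1},\Omega)]$. The Lipschitz dependence of $F$ on its first argument, combined with the smallness of $\|u^k\|_\infty$ and $\|\Omega\|_{L^{2p,n-2p}}$, yields a contraction of constant $<1$ for $\epsilon$ small enough, so $(u^k)$ is Cauchy in $L_2^{p,n-2p}$ and converges to a limit $u\in L_2^{p,n-2p+\alpha}$ solving $\Delta u=\ast d\ast F(u,\Omega)$. Setting $P=e^u$, the 1-form $\omega:=P^{-1}dP+P^{-1}\Omega P$ satisfies $d\ast\omega=0$; by Hodge decomposition on $\bb^n$ there is a unique $\xi$, co-closed and vanishing on $\partial\bb^n$, with $\ast d\xi=\omega$. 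The bounds on $dP$ transfer directly to $d\xi$, and \eqref{thm main:est3 mr} follows by computing
$$\Delta\xi=\ast(dP^{-1}\wedge dP)+\ast d(P^{-1}\Omega P)$$
as in \eqref{laplasjan xi}, and estimating the first summand by Lemma~\ref{Morrey product}, the second via $d\Omega\in L^{p,n-2p}$ and H\"older's inequality in Morrey spaces.

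The main obstacle is properly closing the cascade of Morrey--Sobolev estimates: the product bounds for $u\cdot du$ and $u\cdot\Omega$ require $u$ to lie already in $L_2^{p,n-2p+\alpha}$, a circularity resolved by induction together with the $L^\infty$ bound from the Morrey--Sobolev embedding. This is also why the assumption $\alpha>0$ is essential: for $\alpha=0$ the $C^0$ bound on $u$ is lost and the nonlinearity cannot be controlled. Theorem~\ref{u-m} (which allows $\alpha=0$) is then obtained by approximating $\Omega\in L^{2p,n-2p}$ with forms of slightly higher Morrey exponent and passing to the limit using the $\alpha$-independent estimate~\eqref{thm main:est1 mr}.
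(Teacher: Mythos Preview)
Your plan uses the right engine (the Tao--Tian iteration), but it bypasses a structural ingredient that the paper relies on, and that omission creates a genuine gap.

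You iterate directly on $\Omega$, setting $F(u,\Omega)=du-e^{-u}de^u-e^{-u}\Omega e^u$. For the fixed-point argument to close, you need both a self-map and a contraction in $L_2^{p,n-2p+\alpha}$. But the $L^\infty$ bound on $u^k$ that you correctly identify as crucial (via Proposition~\ref{morrey-sobolev}) is of size $C_M\|du^k\|_{L^{p,n-p+\alpha}}\le C\|\Omega\|_{L^{2p,n-2p+\alpha}}$---and this quantity is \emph{not} assumed small. The hypothesis is only $\|\Omega\|_{L^{2p,n-2p}}<\epsilon$. When you estimate the Lipschitz constant of $u\mapsto F(u,\Omega)$, the terms $|D^2(u-v)|\,|u|$ and $|u-v|\,|D^2 v|$ force a factor $\|u\|_\infty$ (respectively $\|D^2v\|_{L^{p,n-2p+\alpha}}$) into the contraction constant, and neither is small. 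Running the contraction in the weaker space $L_2^{p,n-2p}$ does not help: that space does not embed in $C^0$, so you still need the $\alpha$-norm to control $\|u-v\|_\infty$. In short, your iteration proves the lemma only under the stronger smallness assumption $\|\Omega\|_{L^{2p,n-2p+\alpha}}+\|d\Omega\|_{L^{p,n-2p+\alpha}}<\epsilon$, which is not what is stated.

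The paper's route avoids this by \emph{not} applying the iteration directly to $\Omega$. It runs the open--closed connectedness argument on the set $U^\alpha_\epsilon\subset V^\alpha_\epsilon$: the Tao--Tian iteration (Lemma~\ref{lemma:existence2}) is used only in the openness step, to solve for a gauge $Q$ when the data is $\ast d\xi_o+\lambda$ with $\lambda=P_o^{-1}(\Omega-\Omega_o)P_o$ a \emph{small perturbation} in the strong norm. There the required smallness of $\eta=\|\lambda\|_{L^{2p,n-2p+\alpha}}+\|d\lambda\|_{L^{p,n-2p+\alpha}}$ is automatic. The a~priori estimates are established separately (Lemma~\ref{a priori2}), using only the smallness of $\|dP\|_{L^{2p,n-2p}}+\|d\xi\|_{L^{2p,n-2p}}$, which is propagated by continuity. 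This decoupling---smallness in the scale-invariant norm $L^{2p,n-2p}$ for the estimates, smallness of a local perturbation in the $\alpha$-improved norm for existence---is exactly what your direct approach loses.
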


\medskip

\begin{proof}[Proof of the Lemma \ref{thm main mr}]

As in the Sobolev case, for $\alpha$, $\epsilon >0$ we introduce sets
\begin{gather*}
V^\alpha_\epsilon =\{\Omega \in L^{2p, n-2p+\alpha} \colon d\Omega \in L^{p,n-2p} \text{ and }\|\Omega \|_{L^{2p, n-2p}}<\epsilon \} \\
U^\alpha_\epsilon =\{\Omega \in L^{2p, n-2p+\alpha} \colon d\Omega \in L^{p,n-2p} \text{ and } \|\Omega \|_{L^{2p,n-2p}}<\epsilon \\
\text{ and there exist $P$ and $\xi$
satisfying the system \eqref{main2} and estimates \eqref{thm main:est mr}}\}
\end{gather*}

In Lemmata \ref{lemma:closedness2} and \ref{lemma:openness2} below we show that for $\epsilon $ sufficiently small the set $U^\alpha_\epsilon $ is closed and open in $V^\alpha_\epsilon $, and since the latter is path connected (it is star-shaped), it follows that $U^\alpha_\epsilon =V^\alpha_\epsilon $. This (up to the proofs of these lemmata) completes the proof of the lemma.\\
\end{proof}

\begin{lemma}\label{lemma:closedness2}
The set $U^\alpha_\epsilon$ is closed in $V^\alpha_\epsilon$.
\end{lemma}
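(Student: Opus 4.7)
The plan follows the blueprint of Lemma \ref{lemma:closedness} for the Sobolev case, with Morrey norms replacing Sobolev ones throughout. Let $(\Omega_k) \subset U^\alpha_\epsilon$ be a sequence converging to some $\Omega$ in $V^\alpha_\epsilon$, i.e., $\Omega_k \to \Omega$ strongly in $L^{2p,n-2p+\alpha}$ and $d\Omega_k \to d\Omega$ strongly in $L^{p,n-2p}$; the target space is clear because both these norms are finite for $\Omega$ and each $\Omega_k$. To each $\Omega_k$ I attach the corresponding $P_k \in W^{2,p}(\bb^n, SO(m))$ and $\xi_k \in W^{2,p}$ from the system \eqref{main2}, which by hypothesis satisfy the Morrey estimates \eqref{thm main:est mr}. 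Since the convergent sequence $(\Omega_k)$ is bounded in both $L^{2p,n-2p+\alpha}$ and $L^{p,n-2p}$ (with $d\Omega_k$ bounded in the latter), these a priori estimates, combined with the trace boundary conditions $P_k=\id$, $\xi_k=0$ on $\partial\bb^n$ and the uniform bound $|P_k|=1$ a.e., yield uniform bounds on $P_k$ and $\xi_k$ in $L^{p,n-2p+\alpha}_2$, and in particular in $W^{2,p}(\bb^n)$.

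After passing to subsequences (not relabelled) I obtain $P_k \rightharpoonup P$ and $\xi_k \rightharpoonup \xi$ weakly in $W^{2,p}$, with the boundary data $P|_{\partial\bb^n}=\id$, $\xi|_{\partial\bb^n}=0$ preserved by continuity of the trace, and the co-closed condition $d\ast\xi=0$ preserved by weak convergence. By Rellich--Kondrachov, $dP_k \to dP$ and $d\xi_k \to d\xi$ strongly in $L^q$ for every $q<np/(n-p)$, while $P_k \to P$ strongly in $L^r$ for every $r<\infty$ (interpolating between the compact Sobolev embedding and the uniform $L^\infty$ bound $|P_k|=1$); in particular $P_k \to P$ a.e. along a further subsequence. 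Since $\Omega_k \to \Omega$ strongly in $L^{2p}(\bb^n) \subset L^p(\bb^n)$ (a bounded domain together with the embedding $L^{2p,n-2p+\alpha} \hookrightarrow L^{2p}$), the products $P_k^{-1} dP_k$, $P_k^{-1} \Omega_k P_k$, and $P_k \ast d\xi_k$ all converge in $L^p$, and one may pass to the limit in the distributional equation $dP_k + \Omega_k P_k = P_k \ast d\xi_k$, obtaining the corresponding equation for $(P, \xi, \Omega)$.

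The main obstacle is transferring the Morrey-type estimates \eqref{thm main:est mr} from the sequence to the limit, as Morrey spaces are not reflexive in the standard sense. I would handle this via weak lower semicontinuity of Morrey norms: if $f_k \rightharpoonup f$ in $L^p_{\mathrm{loc}}$, then $\int_B |f|^p \leq \liminf_k \int_B |f_k|^p$ on every ball $B$, and taking supremum over balls and using $\sup_B \liminf_k \leq \liminf_k \sup_B$ gives
\[
\|f\|_{L^{p,s}} \leq \liminf_k \|f_k\|_{L^{p,s}}.
\]
Applying this to $dP_k \rightharpoonup dP$, $d\xi_k \rightharpoonup d\xi$ in $L^p$ (hence in $L^{p,n-p}$ and $L^{p,n-p+\alpha}$), and to $\Delta P_k \rightharpoonup \Delta P$, $\Delta \xi_k \rightharpoonup \Delta \xi$ in $L^{p,n-2p}$, and using that $\|\Omega_k\|_{L^{2p,n-2p}}, \|\Omega_k\|_{L^{2p,n-2p+\alpha}}, \|d\Omega_k\|_{L^{p,n-2p}}$ converge to the corresponding norms of $\Omega$, I recover \eqref{thm main:est mr} for $(P,\xi,\Omega)$. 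This shows $\Omega \in U^\alpha_\epsilon$ and closes the argument.
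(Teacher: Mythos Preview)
Your proof is correct and follows essentially the same approach as the paper's: extract weak $W^{2,p}$ limits of $P_k,\xi_k$ via the a~priori bounds, pass to the limit in the equation using compact Sobolev embeddings, and recover the estimates for the limit. The only notable difference is that the paper dismisses the transfer of the Morrey estimates \eqref{thm main:est mr} to the limit as ``obvious'', whereas you spell out the weak lower semicontinuity argument $\|f\|_{L^{p,s}}\le\liminf_k\|f_k\|_{L^{p,s}}$; this is a genuine (if routine) addition, and your justification via $\sup_B\liminf_k\le\liminf_k\sup_B$ is sound. One small overreach: the estimates \eqref{thm main:est mr} control only $\Delta P_k$ in $L^{p,n-2p}$, not the full Hessian in $L^{p,n-2p+\alpha}$, so the claim of uniform bounds in $L^{p,n-2p+\alpha}_2$ is slightly too strong --- but since you only use the $W^{2,p}$ bound (which does follow, via elliptic regularity from the Dirichlet data and $\Delta P_k\in L^p$), this does not affect the argument.
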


\begin{proof}

Suppose $(\Omega_k)$ is a sequence in $U_\epsilon^\alpha $ convergent in $L_1^{p,n-2p}$ to some $\Omega$.
Observe that $L_1^{p,n-2p}$ embeds continuously in $W^{1,p}$. Therefore the sequence $(\Omega_k)$ is convergent in $W^{1,p}$.

With every $\Omega_k$ we have associated $P_k$, $\xi_k$ that satisfy \eqref{main2}:
\begin{equation*}
\left\{
\begin{aligned}
P_k^{-1}dP_k+P_k^{-1}\Omega_k P_k&=\ast d\xi_k &&\text{on }\bb^n,\\
d\ast\xi_k&=0 &&\text{on }\bb^n,\\
\xi_k&=0&&\text{on } \partial \bb^n.\\
\end{aligned}
\right.
\end{equation*}

We also have the estimates \eqref{thm main:est mr}, in particular
\begin{align*}
\|d\xi_k \|_{L^{p, n-p+\alpha}}+\|dP_k\|_{L^{p,n-p+\alpha}}& \leq  C(n,m)\|\Omega_k \|_{L^{2p, n-2p+\alpha}} \\
\|\Delta \xi_k \|_{L^{p, n-2p}}+\| \Delta P_k\|_{L^{p,n-2p}}& \leq
C(n,m)( \|\Omega_k \|_{L^{2p, n-2p}} + \|d \Omega_k \|_{L^{p, n-p}}).
\end{align*}
The inclusion of $L_1^{p,n-p+\alpha}$ in $W^{1,p}$, the boundary condition on $\xi_k$ and boundedness of $P_k$ ($|P_k|=1$) allow us to interpret the above as boundedness of $P_k$ and $\xi_k$ in $W^{2,p}$. We can thus assume (after passing to subsequences) that $P_k$ and $\xi_k$ are weakly convergent in $W^{2,p}$ to some $P$ and $\xi$.

Then, we argue as in Lemma \ref{lemma:closedness}: Both the boundary condition $\xi|_{\partial\bb}=0$ and the condition $d\ast\xi=0$ are preserved when passing to the weak limit. Moreover, since $n > 2p > p$, after passing to a subsequence,
\begin{align*}
\Omega_k &\rightarrow\Omega &\text{ in }W^{1,p}&&~~\Rightarrow~~&&\Omega_k &\rightarrow\Omega &\text{ in }L^{\frac{np}{n-p}},\\
P_k &\rightharpoonup P &\text{ in }W^{2,p}&&~~\Rightarrow~~&&dP_k &\rightarrow dP &\text{ in }L^{n},\\
\xi_k &\rightharpoonup \xi &\text{ in }W^{2,p}&&~~\Rightarrow~~&&d\xi_k &\rightharpoonup d\xi &\text{ in }L^{\frac{np}{n-p}},
\end{align*}
and for any small $\delta >0$, $\Omega_k$, $dP_k$ and $d\xi_k$  converge strongly (to $\Omega$,$P$ and $\xi$, respectively) in $L^s$, for any $s<\frac{np}{n-p}$. We also know that $P_k$ are uniformly bounded in $L^{\infty}$ and strongly convergent, by Sobolev embedding theorem, in $L^{q}$ for any $q$. This is enough to show the strong convergence of $\Omega_k P_k$ to $\Omega P$ in $L^s$; altogether, we may pass to the strong limit in $L^s$ in the system \eqref{thm main:system}, showing that the equation
$$
dP+\Omega P=P\ast d\xi \qquad \text{in }\bb^n
$$
is satisfied in the sense of distributions.

The estimates \eqref{thm main:est mr} for $P$ and $\xi$ are then obvious.

\end{proof}

\begin{lemma}\label{lemma:existence2}
Let
\begin{align*}
\zeta & \colon \bb^n \to so(m)\otimes \Lambda^{n-2}\R^n \\
\lambda & \colon \bb^n \to so(m)\otimes\Lambda^1\R^n \\
Q & \colon \bb^n \to SO(m).
\end{align*}

Assume $\zeta$ belongs to the Morrey-Sobolev space $L_2^{p,n-2p+\alpha}$ and
$$
\lambda \in L^{2p,n-2p+\alpha}, \qquad d\lambda \in L^{p,n-2p+\alpha}
$$
There exist constants $\kappa =\kappa (p,n)>0$ and $\eta =\eta(p,n)>0$ such that if the following smallness conditions are satisfied
\begin{equation} \label{zeta estimate}
\begin{split}
\| d\zeta \|_{L^{2p,n-2p}}  \leq \kappa, \\
\| \lambda \|_{L^{2p,n-2p+\alpha}} + \| d \lambda \|_{L^{p,n-2p+\alpha}} \leq \eta,
\end{split}
\end{equation}
then there exists a solution $Q \in L_2^{p,n-2p+\alpha}$ of the equation
\begin{equation}\label{implicit2}
\ast d \ast\left(Q^{-1}dQ+Q^{-1}(\ast d\zeta+\lambda)Q\right)=0
\end{equation}
with $Q=\id$ on $\partial \bb^n$.

\end{lemma}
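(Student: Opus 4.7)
The plan is to follow the Tao--Tian iteration strategy outlined in the introduction, carried out this time entirely in the Morrey--Sobolev framework of Section~\ref{sec:Morrey}. I look for $Q$ of the form $Q=e^u$, where $u\colon\bb^n\to so(m)$ vanishes on $\partial\bb^n$. The hypothesis $\alpha>0$ is what legitimizes the exponential: by Proposition~\ref{morrey-sobolev} any $u\in L_2^{p,n-2p+\alpha}$ is H\"older continuous, so $e^{\pm u}$ is well-defined and pointwise bounded, and $|e^{\pm u}-\id|\leq C\|u\|_{L^\infty}$. Rewriting \eqref{implicit2} in the form
\begin{equation*}
\Delta u=\ast d\ast F(u,\zeta,\lambda),\qquad F(u,\zeta,\lambda)=du-e^{-u}de^u-e^{-u}(\ast d\zeta+\lambda)e^u,
\end{equation*}
I define the iterates by $u^0=0$ and $\Delta u^{k+1}=\ast d\ast F(u^k,\zeta,\lambda)$ with zero Dirichlet data. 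The aim is to show that for $\kappa,\eta$ small enough, this iteration preserves a small ball in $L_2^{p,n-2p+\alpha}$ and is a contraction on it.

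The structural point which makes the scheme close is the cancellation of all ``zeroth-order in $u^k$'' contributions to $\ast d\ast F(u^k,\zeta,\lambda)$: the linear piece $\ast d\ast(du^k)=\Delta u^k$ is exactly matched by the leading term of $\ast d\ast(e^{-u^k}de^{u^k})$, while $\ast d\ast(\ast d\zeta)=\pm\ast dd\zeta=0$ since $d^2=0$. What survives are the Duhamel commutator remainders
\begin{equation*}
R_1=e^{-u}de^u-du,\quad R_2=e^{-u}(\ast d\zeta)e^u-\ast d\zeta,\quad R_3=e^{-u}\lambda e^u-\lambda,
\end{equation*}
each carrying at least one factor of $u$, together with the purely linear divergence $\ast d\ast\lambda$. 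After applying $\ast d\ast$, the nonzero contributions split into products of the types $|du|^2$, $|u|\cdot|D^2u|$, $|du|\cdot|d\zeta|$, $|du|\cdot|\lambda|$ and $|u|\cdot|d\lambda|$, multiplied by pointwise bounded functions of $\|u\|_{L^\infty}$. Each such product is controlled in $L^{p,n-2p+\alpha}$ by Lemma~\ref{Morrey product}; for instance
\begin{equation*}
\|\,|du|\cdot|d\zeta|\,\|_{L^{p,n-2p+\alpha}}\leq C\|u\|_{L_2^{p,n-2p+\alpha}}\|d\zeta\|_{L^{2p,n-2p}}\leq C\kappa\|u\|_{L_2^{p,n-2p+\alpha}},
\end{equation*}
with analogous bounds for the remaining bilinear terms using \eqref{kwadrat w Morreyach} and the smallness assumptions \eqref{zeta estimate}.

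Combined with the Calder\'on--Zygmund estimate $\|u^{k+1}\|_{L_2^{p,n-2p+\alpha}}\leq C\|\Delta u^{k+1}\|_{L^{p,n-2p+\alpha}}$ for Dirichlet data, these inequalities give an iteration bound of the form
\begin{equation*}
\|u^{k+1}\|_{L_2^{p,n-2p+\alpha}}\leq C\bigl(\kappa+\eta+\|u^k\|_{L_2^{p,n-2p+\alpha}}\bigr)\|u^k\|_{L_2^{p,n-2p+\alpha}}+C\eta,
\end{equation*}
so that for $\kappa,\eta,M$ small enough the ball $\{\|u\|_{L_2^{p,n-2p+\alpha}}\leq M\}$ is invariant under the iteration; the same product estimates applied to differences $u^{k+2}-u^{k+1}$ yield a geometric contraction, and the limit $u$ produces a solution $Q=e^u$ of \eqref{implicit2} with $Q=\id$ on $\partial\bb^n$. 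The main technical obstacle is the bookkeeping of the Duhamel expansions of $e^{-u}(\cdot)e^u$ so that every product in $\ast d\ast F$ can be fitted into Lemma~\ref{Morrey product} with the correct allocation of the ``$u$-factor'' versus the ``$dv$-factor''---so that the $\alpha$-weighted Morrey regularity sits on the factor whose smallness in the base Morrey norm has already been secured---and the handling of the linear divergence $\ast d\ast\lambda$, which is not directly bounded in $L^{p,n-2p+\alpha}$ by the hypotheses and therefore forces one to run the iteration first at the divergence-form (Riesz-potential) level to obtain $\|du^{k+1}\|_{L^{p,n-p+\alpha}}\leq C\|F(u^k)\|_{L^{p,n-p+\alpha}}$, and only afterwards to bootstrap to the full second-derivative Morrey estimate.
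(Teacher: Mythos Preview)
Your approach is exactly the paper's: write $Q=e^u$ with $u|_{\partial\bb^n}=0$, recast \eqref{implicit2} as $\Delta u=\ast d\ast F(u,\zeta,\lambda)$, run the Tao--Tian iteration in $L_2^{p,n-2p+\alpha}$, close each step via the pointwise bound
\[
|\ast d\ast F|\leq C\bigl(|u|\,|D^2u|+|du|^2+|d\zeta|\,|du|+|\lambda|\,|du|+|d\lambda|\bigr)
\]
together with Lemma~\ref{Morrey product} and Morrey--elliptic regularity, and then prove contraction on differences $F(u,\zeta,\lambda)-F(v,\zeta,\lambda)$ by the same product estimates.

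Your final paragraph's worry about the bare term $\ast d\ast\lambda$ is a misreading of the hypothesis: in the paper's proof (and in its sole application, Lemma~\ref{lemma:openness2}, where one has $\lambda\in L_1^{p,n-2p+\alpha}$) the assumption ``$d\lambda\in L^{p,n-2p+\alpha}$'' is used as control on the \emph{full} first derivative of $\lambda$, so $|\ast d\ast\lambda|\leq C|\nabla\lambda|$ lies directly in $L^{p,n-2p+\alpha}$ and no preliminary first-order pass is needed. Your suggested detour through $\|F(u^k)\|_{L^{p,n-p+\alpha}}$ would in fact not close as written, since $\lambda\in L^{2p,n-2p+\alpha}$ embeds only into $L^{p,n-p+\alpha/2}$, not $L^{p,n-p+\alpha}$.
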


Note that \eqref{implicit2} implies, through Poincar\'e's lemma, that the term in parentheses is of the form $\ast d\tilde{\zeta}$, for some antisymmetric $(n-2)$-form $\tilde{\zeta}$.

\begin{proof}
Since we are interested in finding any $Q$ satisfying \eqref{implicit2}, we shall look for one of the form $e^u$, where
$$
u \colon \bb^n \to so(m), \quad u\in L_2^{p,n-2p+\alpha}
$$

Also, we need the boundary condition on $Q$ to hold, so we ask that $u$ has zero boundary values.

The equation \eqref{implicit2}, together with the boundary condition, can be rewritten as
\begin{equation} \label{system u}
\begin{split}
\Delta u &= \ast d \ast
\left(du - e^{-u} de^u - e^{-u}(\ast d \zeta + \lambda) e^u \right), \\
u &= 0 \quad \text{on $\partial \bb^n$}.
\end{split}
\end{equation}

We follow the proof of Tao and Tian \cite{TaoTian}, setting up the iteration scheme
\begin{align} \label{it scheme}
\Delta u^{k+1} &= \ast d \ast F(u^k,\zeta, \lambda), \\
u^{k+1} &=  0 \quad \text{on $\partial \bb^n$}, \nonumber \\
u^0 &= 0, \nonumber
\end{align}
where
\begin{equation} \label{F rhs}
F(u,\zeta,\lambda) = du - e^{-u} de^u - e^{-u}(\ast d \zeta + \lambda) e^u.
\end{equation}

Some calculations need more subtle justification though, since we work in noncommutative setting.
We will show that  there exists $\delta>0$ such that in each step of the recurrence
$$
\text{if} \quad
\| u^k \|_{L_2^{p,n-2p+\alpha}} \leq \delta
\quad \text{then} \quad
\| u^{k+1} \|_{L_2^{p,n-2p+\alpha}} \leq \delta.
$$

\medskip

We start with an easy observation. Since
$$
L_2^{p, n-2p +\alpha} \hookrightarrow L_1^{p,n-p+\alpha} \hookrightarrow C^{0,\gamma} \quad \text{for some $\gamma$}
$$
and H\"older-continuous functions on the bounded domain $B$ are bounded, it follows that if $f \in L_1^{p,n-p+\alpha}(B)$  with $f=0$ on $\partial \Omega$, then

\begin{equation*}
\|f\|_\infty \leq [f]_{C^{0,\gamma}} \leq C_M \| f \|_{L_1^{p,n-p+\alpha}(B)} \leq
C_M \| f \|_{L_2^{p,n-2p+\alpha}(B)},
\end{equation*}
where $C_M$ is the constant from the Morrey--Sobolev Embedding Lemma (see Proposition \ref{morrey-sobolev}).

Therefore, for every $\beta$, whenever $\delta < \frac{\beta}{C_M}$ it holds
\begin{equation} \label{bounded by morrey}
\text{if} \quad \|f\|_{L_2^{p,n-2p+\alpha}(B)} < \delta, \quad
\text{then} \quad \| f \|_\infty \leq \beta.
\end{equation}

\medskip

Now we can start the induction, assuming
\begin{equation} \label{uk bounded}
\|u^k \|_{L_2^{p,n-2p+\alpha}(B)} < \delta < \text{an absolute constant (to be specified later)}.
\end{equation}
Although the value of $\beta$ will be fixed later, we may assume already that $\beta<\frac{1}{2}$.

We will show first that the following pointwise estimates hold
\begin{equation} \label{pointwise}
| \ast d \ast F(u^k,\zeta, \lambda)| \leq
C(n) \Big( |u^k| |D^2 u^k| + |du^k|^2 + | d \zeta || du^k | + | \lambda || du^k | + |d \lambda | \Big) .
\end{equation}
Indeed, let $E(u)=e^{-u}D\exp(u)$. Since $E$ is smooth, both $E$ and $DE$ are bounded and Lipschitz continuous on $\{|u|<\beta\}$, the same holds, obviously, for $\exp$ and $D\exp$. Then
$$
F(u^k,\zeta, \lambda)=\left(E(0)-E(u^k)\right) du^k+e^{-u^k}\left(\ast d\zeta +\lambda\right)e^{u^k},
$$
and
\begin{equation*}
\begin{split}
|\ast d \ast F(u^k,\zeta, \lambda)| &\leq |DE(u^k)||du^k|^2+|E(0)-E(u^k)||D^2 u^k|\\
&\quad +C(n)\left(|du^k|(|d\zeta|+|\lambda|)+|d\lambda|\right)\\
&\leq C(n) \Big( |u^k| |D^2 u^k| + |du^k|^2 + | d \zeta || du^k | + | \lambda || du^k | + |d \lambda | \Big)
\end{split}
\end{equation*}

\medskip

Passing from pointwise to $L^{p,n-2p+\alpha}$ estimates, using Lemma \ref{Morrey product} we obtain
\begin{equation*}
\begin{split}
\| \ast d \ast &F(u^k,\zeta, \lambda)\|_{L^{p,n-2p+\alpha}}\\
&\leq C_1(n,p) \Big( \|u^k\|_{L^\infty}\| u^k \|_{L_2^{p,n-2p+\alpha}} + \|d u^k\|^2_{L^{2p,n-2p+\alpha}}\\
& \qquad + (\| d \zeta \|_{L^{2p,n-2p}} + \| \lambda \|_{L^{2p,n-2p+\alpha}} )
\| u^k \|_{L_2^{p,n-2p+\alpha}} +  \| d\lambda \|_{L^{p,n-2p+\alpha}} \Big).
\end{split}
\end{equation*}
The smallness conditions \eqref{zeta estimate}, \eqref{bounded by morrey} and \eqref{uk bounded} then imply
\begin{equation*}
\| \ast d \ast F(u^k,\zeta, \lambda)\|_{L^{p,n-2p+\alpha}}
\leq C_2(n,p,C_M) \Big( \beta \delta + \delta^2 + (\kappa + \eta) \delta +
 \eta \Big),
\end{equation*}
Regularity estimates for linear elliptic systems (see \cite{giaquinta}) yield
\begin{equation*}
\|u^{k+1} \|_{L_2^{p,n-2p+\alpha}}
\leq
C_3(n,p,C_M) \Big( \beta \delta + \delta^2 + (\kappa + \eta) \delta +
 \eta \Big)
\end{equation*}

W.l.o.g. we may assume $C_3 >1$. Let us choose $\beta$, $\eta$ and $\kappa$ such that
$$
\beta < \frac{1}{4C_3},
$$
$$
\eta < \min\{\frac{\beta}{4C_3 C_M}, \frac{1}{16 C_3^2} \},
$$
and
$$
\kappa < \frac{3}{16C_3}.
$$

Now we set
$$
\delta = 4C_3 \eta < \min\{ \frac{\beta}{C_M}, \frac{1}{4 C_3} \}.
$$
Then, if
$$
\| u^{k} \|_{{L_2^{p,n-2p+\alpha}}} \leq \delta,
$$
we have
\begin{equation*}
\|u^{k+1} \|_{L_2^{p,n-2p+\alpha}} \leq \delta.
\end{equation*}

Now, let us apply the same scheme to the differences of $u^k$. We assume that $u,v\in L_2^{p,n-2p+\alpha}$, $\|u\|_{L_2^{p,n-2p+\alpha}}<\delta$, $\|v\|_{L_2^{p,n-2p+\alpha}}<\delta$ and $u,v=0$ on $\partial \bb^n$.
This, in particular, implies (by \eqref{bounded by morrey}), that $\|u\|_{L^\infty}$ and $\|v\|_{L^\infty}$ are at most $\beta$ and although $\beta$ is to be specified later, we assume as before that it is less than $\frac12$.
We have
\begin{equation*}
\begin{split}
F&(u,\zeta,\lambda)-F(v,\zeta,\lambda)\\
& =\big( d(u-v) - e^{-u}de^u+e^{-v} de^v \big) + \big( -e^{-u}(\ast d\zeta+\lambda)e^u+e^{-v}(\ast d\zeta+\lambda)e^v \big) \\
&=\Big(\left(E(u)-E(0)\right)d(v-u)+\left(E(v)-E(u)\right)dv\Big)\\
&\quad +\Big((e^{-v}-e^{-u})(\ast d\zeta+\lambda)e^u+e^{-v}(\ast d\zeta+\lambda)(e^v-e^u)\Big)\\
& = S_1 + S_2.
\end{split}
\end{equation*}
Next, to estimate $|\ast d \ast F(u,\zeta,\lambda)-F(v,\zeta,\lambda)|$, we shall estimate separately $|\ast d \ast S_1|$ and $|\ast d \ast S_2|$, to avoid multi-line calculations.

Using as before boundedness and Lipschitz continuity of $E$, $DE$, $\exp$ and $D\exp$ on $\{|u|\leq\beta\}$ and keeping in mind that $\ast d \ast (\ast d \zeta)=0$ we get
\begin{equation}\label{S1 pointwise}
\begin{split}
|\ast d \ast S_1|&\leq |DE(u)||du||d(u-v)|+|E(u)-E(0)||D^2(u-v)|\\
&\quad +|E(u)-E(v)||D^2v|+|DE(v)dv-DE(u)du||dv|\\
&\leq C\left(|du||d(u-v)|+|u||D^2(u-v)| +|u-v||D^2 v|\right)\\
&\quad +|(DE(v)-DE(u))dv+DE(u)(dv-du)||dv|\\
&\leq C\left(|u-v|(|D^2v|+|dv|^2)+|d(u-v)|(|du|+|dv|)+|D^2(u-v)||u|\right)
\end{split}
\end{equation}
and
\begin{equation}\label{S2 pointwise}
\begin{split}
|\ast d \ast S_2|&\leq |D\exp(-u)du -D\exp(-v)dv|(|d \zeta|+|\lambda|)|e^u|\\
&\quad +|e^{-u}-e^{-v}||d\lambda||e^u|+|e^{-u}-e^{-v}|(|d\zeta|+|\lambda|)|D\exp(u)||du|\\
&\quad +|D\exp(-v)||dv|(|d\zeta|+|\lambda|)|e^u-e^v|+|e^{-v}||d\lambda||e^u-e^v|\\
&\quad +|e^{-v}|(|d\zeta|+|\lambda|)|D\exp(u) du-D\exp(v) dv|\\
&\leq C\big(|u-v|(|d\lambda| + (|du|+|dv|)(|d\zeta|+|\lambda|))\\
& \quad + |d(u-v)|(|d\zeta|+|\lambda|)\big).
\end{split}
\end{equation}

Altogether, adding up the estimates \eqref{S1 pointwise} and \eqref{S2 pointwise}, we obtain the following pointwise estimate
\begin{equation}
\begin{split}
  |\ast d \ast (F(u,\zeta,\lambda)&-F(v,\zeta,\lambda))|\\
  &\leq C\Big( |u-v|\big(|D^2v|+|dv|^2+|d\lambda|+(|du|+|dv|)(|d\zeta|+|\lambda|)\big)\\
   	 &\quad +|d(u-v)|\big(|du|+|dv|+|d\zeta|+|\lambda|\big)+|D^2(u-v)||u|\Big).
\end{split}
\end{equation}
with $C$ a universal constant.

Passing from the pointwise to $L^{p,n-2p+\alpha}$ estimates, using repeatedly \eqref{bounded by morrey}, H\"older's inequality, Lemma \ref{Morrey product} and keeping  in mind all smallness conditions,~i.e.

\renewcommand{\arraystretch}{1.5}
\hspace{2cm}\begin{tabular}{rr}
&$\displaystyle\|d\zeta\|_{L^{2p,n-2p}} \leq \kappa$,\\
\multicolumn{2}{r}{$\displaystyle\| \lambda \|_{L^{2p,n-2p+\alpha}} + \| d \lambda \|_{L^{p,n-2p+\alpha}}  \leq \eta$,}\\
$\displaystyle\| u \|_{{L_2^{p,n-2p+\alpha}}}  \leq \delta$,& $\displaystyle\| v \|_{{L_2^{p,n-2p+\alpha}}}  \leq \delta$, \\
$\displaystyle\|u\|_{L^\infty} < \beta$,&\qquad \qquad 
$\displaystyle\|v\|_{L^\infty} < \beta$,
\end{tabular}\\
\renewcommand{\arraystretch}{1}

we obtain
\begin{equation*}
\begin{split}
 \| \ast d \ast & (F(u,\zeta,\lambda) - F(v,\zeta,\lambda)) \|_{L^{p,n-2p+\alpha}}\\
 &\leq C(n,p)(\delta^2+\delta(3+2\kappa+2\eta)+\kappa+2\eta+\beta)\|u-v\|_{L_2^{p,n-2p+\alpha}}.
\end{split}
\end{equation*}

If we denote $H(u^k) = u^{k+1}$, where $u^{k+1}$ is a solution to \eqref{it scheme}, then
\begin{multline*}
 \| H(u) - H(v)  \|_{L_2^{p,n-2p+\alpha}}  \\
 \leq C_E C(n,p) \| u-v \|_{L_2^{p,n-2p+\alpha}}  \big(  \delta^2 + \delta + \delta (\kappa + \eta) + \beta + \kappa + \eta \big),
\end{multline*}
where $C_E$ is an absolute constant from elliptic estimates. Now, in order to show that $H$ is a contraction, we choose $\beta$  and $\kappa$ and $\eta$ sufficiently small.  The choice of $\beta$ and $\eta$  results in the choice of $\delta$. Therefore, by the Banach fixed point theorem, the iteration scheme \eqref{it scheme} converges and we obtain the desired solution of the system \eqref{system u}.

\end{proof}

The rest of the proof mimics the proof in the Sobolev case, and the Lemmata \ref{a priori2} and \ref{lemma:openness2} are direct counterparts of Lemmata~\ref{a priori} and~\ref{lemma:openness} from Section~\ref{sec:Sobolev}.

\begin{lemma}\label{a priori2}
Suppose $p < n/2$. There exists $\kappa=\kappa(p,n)$ with the following property: suppose that for  $\Omega\in V^\alpha_\epsilon $ there exist  $P$ and $\xi $ in $L_2^{p,n-2p + \alpha}(\bb^n)$ satisfying the system \eqref{main2} and additionally the estimate
\begin{equation}\label{kappa estimate mr}
\|dP\|_{L^{2p,n-2p}(\bb^n)}+\|d\xi \|_{L^{2p,n-2p}(\bb^n)}<\kappa.
\end{equation}
Then the estimates \eqref{thm main:est mr} hold.
\end{lemma}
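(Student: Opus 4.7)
The strategy follows Lemma~\ref{a priori} with each Sobolev ingredient replaced by its Morrey counterpart. Starting from the system \eqref{main2}, I first derive the Laplacian equations
\[
\Delta\xi=\ast(dP^{-1}\wedge dP)+\ast d(P^{-1}\Omega P),\qquad \Delta P=\ast(dP\wedge d\xi)+\ast d\ast(\Omega P),
\]
the second by applying $\ast d\ast$ to $dP=P\ast d\xi+\Omega P$. Their right-hand sides are pointwise dominated by $|dP|^{2}+|dP||d\xi|+|dP||\Omega|+|d\Omega|$.

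H\"older's inequality in Morrey spaces, together with Lemma~\ref{Morrey product} and the smallness hypothesis \eqref{kappa estimate mr}, then yields
\[
\|\Delta\xi\|_{L^{p,n-2p}}+\|\Delta P\|_{L^{p,n-2p}}\leq C\kappa\bigl(\|dP\|_{L^{2p,n-2p}}+\|d\xi\|_{L^{2p,n-2p}}+\|\Omega\|_{L^{2p,n-2p}}\bigr)+C\|d\Omega\|_{L^{p,n-2p}}.
\]
Standard elliptic regularity in Morrey spaces (invoking the boundary conditions $P-\id=0$ and $\xi=0$ on $\partial\bb^n$) provides
\[
\|P-\id\|_{L_2^{p,n-2p}}+\|\xi\|_{L_2^{p,n-2p}}\leq C\bigl(\|\Delta P\|_{L^{p,n-2p}}+\|\Delta\xi\|_{L^{p,n-2p}}\bigr),
\]
and the Gagliardo-Nirenberg-type estimate \eqref{kwadrat w Morreyach} with $\alpha=0$ gives
\[
\|dP\|_{L^{2p,n-2p}}+\|d\xi\|_{L^{2p,n-2p}}\leq C\bigl(\|P-\id\|_{L_2^{p,n-2p}}+\|\xi\|_{L_2^{p,n-2p}}\bigr).
\]
Chaining these three inequalities and taking $\kappa$ small enough closes the loop and delivers \eqref{thm main:est3 mr}. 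The $\alpha$-scaled estimate \eqref{thm main:est2 mr} follows by the same scheme, carrying the scaling exponent $\alpha$ through each step; Lemma~\ref{Morrey product} and \eqref{kwadrat w Morreyach} are tailored to allow exactly this transfer between $L^{2p,n-2p+\alpha}$ and $L^{p,n-2p}$ norms.

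The main obstacle is the sharper first-order estimate \eqref{thm main:est1 mr}, in which $\|d\Omega\|_{L^{p,n-2p}}$ is \emph{absent} from the right-hand side. To handle this I would mimic the Sobolev duality argument of Lemma~\ref{a priori}: test the $\Delta\xi$-equation against $(n-2)$-forms $\phi$ drawn from a predual of the relevant Morrey space and integrate by parts to shift the derivative off $\Omega$, obtaining
\[
\int d\xi\cdot d\phi=-\int\bigl[dP^{-1}(P-\bar P)+P^{-1}\Omega P\bigr]\wedge d\phi.
\]
A Morrey version of the Coifman-Lions-Meyer-Semmes div-curl inequality controls the Jacobian-like first summand, combining with the BMO bound $[P-\bar P]_{BMO}\leq C\|dP\|_{L^{p,n-p}}$ from \eqref{bmo morrey comp} to supply a factor of $\kappa$; the second summand is handled by Morrey-H\"older directly. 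Coupled with the pointwise bound $|dP|\leq|d\xi|+|\Omega|$ and the embedding $L^{2p,n-2p}\hookrightarrow L^{p,n-p}$, a final bootstrap then closes \eqref{thm main:est1 mr}.
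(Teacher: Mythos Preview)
Your bootstrap for \eqref{thm main:est3 mr} is workable, but the plan breaks down for \eqref{thm main:est2 mr}. Carrying the exponent $\alpha$ through the Laplacian bootstrap requires placing $\|\Delta P\|$ and $\|\Delta\xi\|$ in $L^{p,n-2p+\alpha}$, and the pointwise bound you wrote contains $|d\Omega|$. By hypothesis (see the definition of $V^\alpha_\epsilon$) one only has $d\Omega\in L^{p,n-2p}$, \emph{not} $L^{p,n-2p+\alpha}$, so the loop cannot be closed at the shifted Morrey scale. This is exactly the same obstruction you correctly identified for \eqref{thm main:est1 mr}; it applies equally to \eqref{thm main:est2 mr}, so both first-order estimates require the sharper argument, not just one of them.

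The paper resolves this not via duality in a Morrey predual, but by a concrete \emph{localization}. On each ball $B$ one splits $\xi=u+v$ on $B\cap\bb^n$, where $u$ solves the Dirichlet problem $\Delta u=\ast(dP^{-1}\wedge dP)$ with $u=0$ on $\partial(B\cap\bb^n)$, and $v$ solves $\Delta v=\ast d(P^{-1}\Omega P)$ on all of $\bb^n$ with $v=0$ on $\partial\bb^n$. For $u$ one runs the ordinary $L^p$--$L^{p'}$ duality and the classical CLMS inequality on $B\cap\bb^n$, obtaining $\|du\|_{L^p(B\cap\bb^n)}\leq C\kappa\|dP\|_{L^p(B\cap\bb^n)}$; dividing by $r^{n-p+\alpha}$ and taking supremum over balls promotes this to $L^{p,n-p+\alpha}$. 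For $v$ the key point is that the right-hand side is already in divergence form, so standard elliptic estimates give $\|dv\|_{L^{p,n-p+\alpha}}\leq C\|P^{-1}\Omega P\|_{L^{p,n-p+\alpha}}=C\|\Omega\|_{L^{p,n-p+\alpha}}$ with no $d\Omega$ ever appearing. Combined with $|dP|\leq|d\xi|+|\Omega|$ and absorption via small $\kappa$, this yields \eqref{thm main:est1 mr} and \eqref{thm main:est2 mr} simultaneously for all $\alpha\geq 0$. Once these are in hand, \eqref{thm main:est3 mr} is a one-line H\"older estimate using \eqref{kappa estimate mr} directly; no bootstrap is needed there.

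So the missing ingredient is the local Dirichlet splitting, which converts the Morrey estimate into a family of ordinary $L^p$ estimates and exploits the divergence structure of the $\Omega$-term.
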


\begin{remark}
Note that the fact that $dP$, $d\xi \in L^{2p,n-2p}$ follows from \eqref{kwadrat w Morreyach}.
\end{remark}

\begin{proof}[Proof of Lemma \ref{a priori2}]

We have in the ball $\bb^n$
\begin{equation} \label{laplacian xi}
\begin{split}
\Delta \xi&=(\ast d\ast d+d\ast d\ast )\xi=\ast d(\ast d\xi)\\
&=\ast d(P^{-1}dP+P^{-1}\Omega P)\\
&=\ast(dP^{-1}\wedge dP)+\ast d(P^{-1}\Omega P).
\end{split}
\end{equation}

Let $B = B_r(x_0)$ be a fixed ball. On the set $B \cap \bb^n$ we split $\xi$ into a sum of two functions
$\xi = u + v$, satisfying
\begin{align*}
\Delta u &= \ast (dP^{-1}\wedge dP) \quad \text{on $B \cap \bb^n$},\\
u &= 0 \quad  \text{on $\partial (B \cap \bb^n) $}
\end{align*}
(we may assume $\mathrm{supp}\, u \subset B \cap \bb^n$) and
\begin{align*}
\Delta v &= \ast d(P^{-1} \Omega P) \quad \text{on $\bb^n$},\\
v &= 0 \quad  \text{on $\partial \bb^n$}.
\end{align*}
Thus $\xi = u+v$ on $B\cap \bb^n$.

For $q=p/(p-1)$ we have
\begin{equation*}
\| du \|_{L^p(B\cap \bb^n)} \leq C_q \sup_{\|d \phi\|_{L^q}\leq 1}\int_{B \cap \bb^n} d u \cdot d\phi,
\end{equation*}
where $\phi$ is a smooth, compactly supported (in particular with null boundary values)  $(n-2)$-form on $B \cap \bb^n$ (c.f. the proof of Lemma \ref{a priori} ).
Denote by $\bar{P}$ the mean value of $P$ over $\bb^n$: $\bar{P}=\mvint_{\bb^n}P$. For any $\phi$ as above,
\begin{equation*}
\begin{split}
\int_{B \cap \bb^n} du \cdot d\phi&=-\int_{B \cap \bb^n}  \Delta u \cdot \phi\\
&=-\int_{\bb^n} \ast \left(dP^{-1}\wedge d(P-\bar{P})\right) \cdot \phi\\
&=-\int_{\bb^n} dP^{-1}(P-\bar{P})\wedge d\phi \\
&\leq C \| dP^{-1} \wedge d\phi\|_{\mathcal{H}^1(\bb^n)} \|P-\bar{P}\|_{BMO(\bb^n)}\\
&= C \| dP^{-1} \wedge d\phi\|_{\mathcal{H}^1(B \cap \bb^n)} \|P-\bar{P}\|_{BMO(\bb^n)}\\
&\leq C \| dP \|_{L^p(B \cap \bb^n)}  \| d\phi \|_{L^q(B \cap \bb^n)}  \| d P \|_{L^{p,n-p}(\bb^n)}
\end{split}
\end{equation*}
with the constants $C$ (possibly different in every line) dependent only on $n$ and $p$. Note that since $P$ is an orthogonal matrix, $|P|=|P^{-1}|=1$, $|dP^{-1}|=|dP|$. In the estimate above we use the Coifman-Lions-Meyer-Semmes div-curl inequality (\cite{CLMS}) and later the inclusion
$$
L_1^{p,n-p}(\bb^n) \hookrightarrow BMO(\bb^n).
$$

We have then
\begin{align*}
\| du \|_{L^p(B\cap \bb^n)} &\leq C \| dP \|_{L^p(B \cap \bb^n)} \| d P \|_{L^{p,n-p}(\bb^n)}\\
& \leq C \| dP \|_{L^p(B \cap \bb^n)} \| d P \|_{L^{2p,n-2p}(\bb^n)}\\
& \leq C \kappa \| dP \|_{L^p(B \cap \bb^n)}
\end{align*}
due to the smallness assumption \eqref{kappa estimate mr}. Therefore
\begin{equation} \label{est u}
\| du \|_{L^{p,n-p+\alpha} (\bb^n)} \leq C \kappa \| dP \|_{L^{p,n-p+\alpha}(\bb^n)}
\end{equation}
for $\alpha \geq 0$.

To estimate $v$, where
\begin{align*}
\Delta v &= \ast d(P^{-1}\Omega P) \quad \text{on $B \cap \bb^n$},\\
v &= 0 \quad  \text{on $(\partial \bb^n) \cap B, $}
\end{align*}
we use standard elliptic estimates, obtaining
\begin{equation} \label{est v}
\| dv \|_{L^{p,n-p+\alpha} (\bb^n)} \leq C \| \Omega \|_{L^{p,n-p+\alpha}(\bb^n)} .
\end{equation}

Combining \eqref{est u} and \eqref{est v} we conclude with
\begin{equation} \label{est dxi}
\| d\xi \|_{L^{p,n-p+\alpha} (\bb^n)} \leq C \kappa \| dP \|_{L^{p,n-p+\alpha}(\bb^n)}  + C \| \Omega \|_{L^{p,n-p+\alpha}(\bb^n)}.
\end{equation}

On the other hand, taking $L^p$ norms of both sides of the equation
\begin{equation}\label{eq for dP2}
dP=P\ast d\xi+ \Omega P \qquad \text{in $\bb^n$}
\end{equation}
(c.f \eqref{main2}) gives
\begin{equation*}
\|dP\|_{L^p(B \cap \bb^n) }\leq \|d\xi \|_{L^p(B \cap \bb^n)}+\|\Omega \|_{L^p(B \cap \bb^n)}
\end{equation*}
and thus
\begin{equation} \label{est dP}
\|dP\|_{L^{p, n-p+\alpha}(\bb^n) }\leq \|d\xi \|_{L^{p, n-p + \alpha}(\bb^n)}+\|\Omega \|_{L^{p, n-p+\alpha}(\bb^n)}.
\end{equation}
Putting \eqref{est dxi} and \eqref{est dP} together we get, for $\alpha \geq 0$,
\begin{equation*}
\|d\xi\|_{L^{p,n-p+\alpha}(\bb^n)}+\|dP\|_{L^{p,n-p+\alpha}(\bb^n)}
\leq
C\kappa \|dP\|_{L^{p,n-p+\alpha}(\bb^n)}+C \|\Omega \|_{L^{p,n-p+\alpha}(\bb^n)}.
\end{equation*}
 Taking $\kappa$ small enough we conclude the estimates \eqref{thm main:est1 mr},  \eqref{thm main:est2 mr} hold, i.e.
$$
\|d\xi\|_{L^{p,n-p+\alpha}(\bb^n)}+\|dP\|_{L^{p,n-p+\alpha}(\bb^n)}
\leq
C \|\Omega \|_{L^{2p,n-2p+\alpha}(\bb^n)}
$$
for $\alpha \geq 0$.

\medskip

To show the estimate \eqref{thm main:est3 mr}, taking $\ast d \ast$ of both sides of \eqref{eq for dP2}, we see that
\begin{equation*}
\begin{split}
\Delta P&=\ast d\ast d P=\ast d\ast (P\ast d\xi+ \Omega P)\\
&=\ast(dP\wedge d\xi)+\ast d \ast (\Omega P),
\end{split}
\end{equation*}
and from \eqref{laplacian xi} we obtain
\begin{equation*}
\Delta \xi=\ast(dP^{-1}\wedge dP)+\ast d(P^{-1}\Omega P).
\end{equation*}
Therefore, proceeding as in the proof of the estimates \eqref{dP in sobolev} and \eqref{dxi in sobolev},  we obtain
\begin{equation*}
\| \Delta P\|_{L^{p,n-2p}}
\leq
\|dP\|_{L^{2p,n-2p}} \| d\xi \|_{L^{2p,n-2p}} + \| d\Omega \|_{L^{p,n-2p}}
	+ \| \Omega \|_{L^{2p,n-2p}} \| dP \|_{L^{2p,n-2p}}\\
\end{equation*}
and
\begin{equation*}
\| \Delta \xi\|_{L^{p,n-2p}}
\leq  \|dP\|^2_{L^{2p,n-2p}} + 2 \|dP\|_{L^{2p,n-2p}} \| \Omega \|_{L^{2p,n-2p}}
	+ \| d\Omega \|_{L^{p,n-2p}}
\end{equation*}
Applying \eqref{kappa estimate mr}, we obtain
\begin{equation*}
\| \Delta P\|_{L^{p,n-2p}} + \| \Delta \xi\|_{L^{p,n-2p}}
\leq
2 \| d\Omega \|_{L^{p,n-2p}} + 3\kappa  \| \Omega \|_{L^{2p,n-2p}}
\end{equation*}
which proves the estimate \eqref{thm main:est3 mr}.

Observe that the above inequality is a consequence of the equation \eqref{eq for dP2} and the H\"older inequality only, so the estimate holds in any Morrey space $L^{p,\gamma}$ with $\gamma > n-2p$, in which both sides of the inequality are finite.

\end{proof}

\begin{lemma}\label{lemma:openness2}
The set $U_\epsilon^\alpha$ is, for $\epsilon $ sufficiently small, open in $V_\epsilon^\alpha $.
\end{lemma}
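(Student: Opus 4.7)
I follow the template of Lemma~\ref{lemma:openness} from the Sobolev case, adapted to the Morrey--Sobolev setting. Fix $\Omega_o \in U_\epsilon^\alpha$ together with its decomposition $(P_o, \xi_o)$, which satisfies \eqref{main2} and the estimates \eqref{thm main:est mr}, and let $\Omega \in V_\epsilon^\alpha$ be close to $\Omega_o$ in the natural norm of $V_\epsilon^\alpha$ (i.e.\ the $L^{2p,n-2p+\alpha}$-norm of $\Omega$ plus the $L^{p,n-2p}$-norm of $d\Omega$). Set $\lambda := P_o^{-1}(\Omega-\Omega_o)P_o$. Since $P_o \in L_2^{p,n-2p+\alpha}$ Morrey--Sobolev-embeds into $C^{0,\gamma}$ for some $\gamma>0$ by Proposition~\ref{morrey-sobolev}, and $|P_o|=1$ a.e., conjugation by $P_o$ is continuous in the Morrey and Morrey--Sobolev norms at hand, so I may arrange $\|\lambda\|_{L^{2p,n-2p+\alpha}} + \|d\lambda\|_{L^{p,n-2p+\alpha}} < \eta$ for any prescribed $\eta>0$ by choosing $\Omega$ close enough to $\Omega_o$.

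Next I check the hypothesis $\|d\xi_o\|_{L^{2p,n-2p}} \leq \kappa$ of Lemma~\ref{lemma:existence2} (applied with $\zeta = \xi_o$). This follows from the Morrey--Sobolev regularity $\xi_o \in L_2^{p,n-2p+\alpha}$ and the interpolation \eqref{kwadrat w Morreyach}, which bounds $\|d\xi_o\|_{L^{2p,n-2p}}$ by a power of quantities controlled by \eqref{thm main:est mr}; taking $\epsilon$ sufficiently small makes this smaller than $\kappa$. Lemma~\ref{lemma:existence2} then produces $Q = e^{u_\lambda} \in L_2^{p,n-2p+\alpha}(\bb^n,SO(m))$ with $Q = \id$ on $\partial \bb^n$ such that
\[
\ast d \ast \bigl(Q^{-1}dQ + Q^{-1}(\ast d\xi_o + \lambda)Q\bigr) = 0.
\]
Setting $P := P_o Q$ and using the equation $P_o^{-1}dP_o + P_o^{-1}\Omega_o P_o = \ast d\xi_o$, a direct computation collapses the above identity to
\[
\ast d \ast (P^{-1}dP + P^{-1}\Omega P) = 0.
\]
Poincaré's Lemma then yields an antisymmetric $(n-2)$-form $\xi$ with $\ast d\xi = P^{-1}dP + P^{-1}\Omega P$; invoking the Hodge decomposition theorem I select $\xi$ coclosed with $\xi|_{\partial\bb^n}=0$. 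The regularity $P, \xi \in W^{2,p}$ then follows from the product structure of $P = P_o Q$ and the regularity of the right-hand side of the last identity.

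To conclude I must verify that $(P,\xi)$ satisfies the estimates \eqref{thm main:est mr}; this is done by invoking Lemma~\ref{a priori2}. The continuity of $\lambda \mapsto u_\lambda$ in $L_2^{p,n-2p+\alpha}$ (from Lemma~\ref{lemma:existence2}) together with $P_o \in L^\infty$ ensures that $\|P-P_o\|_{L_2^{p,n-2p+\alpha}} + \|\xi-\xi_o\|_{L_2^{p,n-2p+\alpha}}$ can be made arbitrarily small by further restricting the neighbourhood of $\Omega_o$, and \eqref{kwadrat w Morreyach} then forces $\|d(P-P_o)\|_{L^{2p,n-2p}} + \|d(\xi-\xi_o)\|_{L^{2p,n-2p}}$ to be small as well. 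Combined with the smallness of $\|dP_o\|_{L^{2p,n-2p}} + \|d\xi_o\|_{L^{2p,n-2p}}$ (obtained as above from \eqref{kwadrat w Morreyach} applied to $P_o$ and $\xi_o$ together with \eqref{thm main:est mr} and small $\epsilon$), this delivers the smallness hypothesis $\|dP\|_{L^{2p,n-2p}} + \|d\xi\|_{L^{2p,n-2p}} < \kappa$ of Lemma~\ref{a priori2}, whence the full estimates \eqref{thm main:est mr} follow. Therefore $\Omega \in U_\epsilon^\alpha$.

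The main technical obstacle I anticipate is the calibration of the several smallness parameters: Lemma~\ref{a priori2} requires control in $L^{2p,n-2p}$, whereas \eqref{thm main:est mr} directly controls only the weaker Morrey norms $L^{p,n-p+\alpha}$ for the gradient and $L^{p,n-2p}$ for the Laplacian, the latter entangled with the finite but not small quantity $\|d\Omega\|_{L^{p,n-2p}}$. Bridging this gap via the Morrey--Gagliardo--Nirenberg type inequality \eqref{kwadrat w Morreyach}, and choosing $\epsilon$ sufficiently small (and the neighbourhood of $\Omega_o$ tight enough) so that all the smallness chains close up simultaneously, is the delicate quantitative step.
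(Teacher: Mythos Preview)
Your proof is correct and follows essentially the same route as the paper's: fix $\Omega_o$ with its decomposition $(P_o,\xi_o)$, set $\lambda=P_o^{-1}(\Omega-\Omega_o)P_o$, invoke Lemma~\ref{lemma:existence2} with $\zeta=\xi_o$ to get $Q$, set $P=P_oQ$, recover $\xi$ via Poincar\'e and Hodge, and close with Lemma~\ref{a priori2}. The only substantive difference is in justifying the smallness $\|d\xi_o\|_{L^{2p,n-2p}}+\|dP_o\|_{L^{2p,n-2p}}\leq C\epsilon$: the paper simply asserts this ``follows from $\Omega_o\in U_\epsilon$'', whereas you explicitly route it through the Morrey--Gagliardo--Nirenberg inequality \eqref{kwadrat w Morreyach}, and you are right to flag the ensuing calibration (the second-order norm entering via \eqref{thm main:est3 mr} is finite but not \emph{a priori} small) as the delicate quantitative step.
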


\begin{proof}

Choose $\Omega_o\in U_\epsilon^\alpha$ and let $P_o$ and $\xi_o$ be the orthogonal transformation and antisymmetric $(n-2)$-form associated with $\Omega_o$, so that Lemma \ref{thm main mr} holds for $\Omega_o$, $P_o$ and $\xi_o$.

Take now $\Omega\in V_\epsilon^\alpha$ close to $\Omega_o$ in $L^{2p,n-2p+\alpha} \cap L_1^{p, n-2p + \alpha}$: we ask that for $\lambda =P_o^{-1}(\Omega -\Omega_o)P_o$ we have
$$
\| \lambda\|_{L^{2p,n-2p+\alpha}}+\|\lambda \|_{L_1^{p,n-2p+\alpha}}<\eta
$$
 (the conjugation with $P_o\in L_2^{p,n-2p+\alpha}$ is continuous in $L_1^{p,n-2p+\alpha}$). Applying Lemma \ref{lemma:existence2} with $\zeta =\xi_o$ we find $Q\in L_2^{p,n-2p+\alpha}(\bb^n,SO(m))$ such that
\begin{equation}\label{open:1a}
\ast d \ast\left(Q^{-1}dQ+Q^{-1}(\ast d\xi_o+P_o^{-1}(\Omega -\Omega_o)P_o)Q\right)=0.
\end{equation}
Setting $P=P_o Q$ we see that \eqref{open:1a} reduces to
$$
\ast d \ast (P^{-1}dP+P^{-1}\Omega  P)=0.
$$
By Poincar\'e's lemma this implies that $P^{-1}dP+P^{-1}\Omega  P$  is a coexact form, i.e. there exists an antisymmetric  $(n-2)$-form $\xi$ such that
\begin{equation}\label{last2}
\ast d\xi=P^{-1}dP+P^{-1}\Omega  P,
\end{equation}
thus $P$ and $\xi$ give Uhlenbeck's decomposition of $\Omega$.

Note that $Q$ and $P_o \in L_2^{p,n-2p+\alpha}\cap L^\infty$ imply that $P\in L_2^{p,n-2p+\alpha}$.
By Hodge decomposition theorem we can choose $\xi$ to be coclosed ($d\ast\xi=0$ on $\bb^n$) and to have zero boundary values ($\xi|_{\partial \bb^n}=0$). Finally, the right hand side of \eqref{last2} is in $L_1^{p,n-2p+\alpha}$, which gives $\xi\in L_2^{p,n-2p+\alpha}$.

What remains to prove is that $P$, $\xi$ and $\Omega $ satisfy the estimates \eqref{thm main:est mr}. Observe that if $\|\Omega -\Omega_o\|_{L_1^{p,n-2p+\alpha}}$ is small enough, then by continuity of the mapping $\lambda \mapsto u_\lambda$ so is  $\|P-P_o\|_{L_1^{p,n-2p+\alpha}}$ and $\|\xi-\xi_o\|_{L_1^{p,n-2p+\alpha}}$; choosing $\eta$ (which measures the distance $\|\Omega -\Omega_o\|_{L_1^{p,n-2p+\alpha}}$) sufficiently small we get
$$
\|P-P_o\|_{L_1^{p,n-2p+\alpha}}+\|\xi-\xi_o\|_{L_1^{p,n-2p+\alpha}}<\epsilon.
$$
We also know that
$$
\|d\xi_o\|_{L^{2p,n-2p}}+\|dP_o\|_{L^{2p,n-2p}}\leq C\|\Omega_o\|_{L^{2p,n-2p}}\leq C\epsilon
$$
(this follows from $\Omega_o\in U_\epsilon$).

Taking $\epsilon$ sufficiently small we may ensure that
\begin{equation*}
\begin{split}
&\|d\xi\|_{L^{2p,n-2p}}+\|dP\|_{L^{2p,n-2p}}\\
&\leq\|d\xi-d\xi_o\|_{L^{2p,n-2p}}+\|P-P_o\|_{L^{2p,n-2p}}+\|d\xi_o\|_{L^{2p,n-2p}}+\|dP_o\|_{L^{2p,n-2p}}\\
&<(C+1)\epsilon<\kappa,
\end{split}
\end{equation*}
with $\kappa$ as in Lemma \ref{a priori2}. Applying this lemma we show that the estimates \eqref{thm main:est mr} hold.

Altogether, $\Omega\in U_\epsilon^\alpha$, which proves the openness of $U_\epsilon^\alpha$.\\
\end{proof}

\begin{proof}[Proof of Theorem \ref{u-m}]
The proof mimics, in a way, the passage from Theorem \ref{thm main} to Corollary \ref{corr}, i.e. from the Uhlenbeck decomposition in $W^{1,p}$ for $p>n/2$ to the decomposition for $p=n/2$. There, we could simply argue by approximation. In the Morrey space setting, however, neither $L_1^{p,n-p+\alpha}$ embeds densely in $L_1^{p,n-p}$, nor $L^{2p,n-2p+\alpha}$ does into $L^{2p,n-2p}$.

 However (cf. \cite{RiviereStruwe}, proof of Lemma 3.1), one can easily prove that if $(\phi_r)$ is a standard mollifier and $f\in L^{q,n-q}(\bb)$, $q\geq 1$, then on any ball $B=B(x,\rho)$ such that $2B=B(x,2\rho)\subset\bb^n$ we have, for $r<\rho$, that $\|f\ast \phi_r\|_{L^{q,n-q}(B)}\leq \|f\|_{L^{q,n-q}(2B)}$. Reasoning like in the proof of Meyers-Serrin's theorem and using a suitable decomposition of unity we can show then that there exists a sequence $f_k\in C^\infty(\bb)$ convergent to $f$ in $L^q$ (and in any other appropriate Lebesgue and Sobolev norm) such that $\|f_k\|_{L^{q,n-q}(\bb)}\leq C(n)\|f\|_{L^{q,n-q}(\bb)}$.

 We thus proceed as follows: we approximate $\Omega$ in $W^{1,p}$ by a sequence of smooth $\Omega_k$ such that for all $k$

 \begin{equation} \label{Omega k controlled}
 \| \Omega_k \|_{L^{2p,n-2p}(\bb)} \leq C(n) \| \Omega \|_{L^{2p,n-2p}(\bb)}.
 \end{equation}

 Assuming that $\epsilon$ in the condition $\|\Omega\|_{L^{2p,n-2p}}<\epsilon$ is taken small enough we can ensure, through \eqref{Omega k controlled}, that all $\Omega_k$ satisfy the analogous smallness condition in Lemma \ref{thm main mr}. This provides us with sequences of $P_k$ and $\xi_k$ that give the Uhlenbeck decomposition for $\Omega_k$, together with the uniform estimate
$$
\|d\xi_k \|_{L^{p, n-p}}+\|dP_k\|_{L^{p,n-p}} \leq  C(n,m)\|\Omega \|_{L^{2p, n-2p}} <C \epsilon.
$$
Then we proceed as in the proof of closedness of $U_\epsilon^\alpha$ in Lemma \ref{lemma:closedness2}, obtaining convergent subsequences of $P_k$ and $\xi_k$. As $\Omega_k$ are smooth, they satisfy the assumptions of Lemma \ref{thm main mr}, which gives us the estimates \eqref{thm main:est mr}
\begin{align*}
\|d\xi_k \|_{L^{p, n-p}}+\|dP_k\|_{L^{p,n-p}} &\leq  C(n,m)\|\Omega_k \|_{L^{2p, n-2p}},\\
\|\Delta \xi_k \|_{L^{p, n-2p}}+\| \Delta P_k\|_{L^{p,n-2p}} &\leq
C(n,m)\left( \|\Omega_k \|_{L^{2p, n-2p}} + \|\nabla \Omega_k \|_{L^{p, n-2p}}\right).
\end{align*}
The sequences  $P_k$ and $\xi_k$  converge in $L_2^{p,n-2p}$ to appropriate elements of a decomposition of $\Omega$ (this follows from the equation they satisfy). Thus, the above estimates, together with \eqref{Omega k controlled}, yield the desired estimates  \eqref{thm main:est mr} for~$\Omega$. This completes the proof of Theorem~\ref{u-m}.

\end{proof}


\section{Uhlenbeck's decomposition and conformal matrices}\label{sec: conf}

A natural extension of the orthogonal gauge group $SO(m)$ is the conformal group $CO_+(m)$. The interest in this group has deep roots in complex analysis, in particular in the studies related to Liouville's Theorem (see \cite{IwaniecMartin} for a detailed exposition). This is a non-compact group, defined as
$$
CO_+(m)=\{\lambda P ~~:~~\lambda\in \R_+,\, P\in SO(m)\}.
$$
Clearly, $S\in CO_+(m)$ iff $SS^T=\lambda^2 \id$, where by $\id$ we denote the $m\times m$ identity matrix.

The tangent space at $\id$ to $CO_+(m)$, which we denote by $TCO_+(m)$, is given as
$$
TCO_+=\{K\in \mathcal{M}^{m\times m}~~:~~K+K^T=\frac{2 Tr(K)}{m}\otimes \id\},
$$
or, equivalently,
$$
TCO_+=\{A+\mu\id~~:~~A\in so(m),\, \mu\in \R\},
$$
see e.g. \cite{FaracoZhong}.

Our objective is to prove an analogue of Theorem~\ref{thm main} for the conformal gauge group, i.e. Theorem~\ref{thm:conformal}.
\begin{theorem*}
Let $\frac{n}{2}<p<n$. There exists $\epsilon >0$ such that for any $\Omega \in W^{1,p}(\bb^n, TCO_+(m)\otimes \Lambda^1\R^n)$ such that $\|\Omega \|_{L^n}<\epsilon $ there exist $S:\bb^n\to CO_+(m)$ satisfying $\ln |S|\in W^{2,p}(\bb^n)$, $S/|S|\in W^{2,p}(\bb^n, SO(m))$ and $\zeta \in W^{2,p}(\bb^n, TCO_+(m)\otimes \Lambda^{n-2}\R^n)$ such that
\begin{equation}\label{eq:thm2}
\left\{
\begin{aligned}
S^{-1}dS+S^{-1}\Omega S&=\ast d\zeta &&\text{on }\bb^n,\\
d\ast\zeta&=0 &&\text{on }\bb^n,\\
\zeta&=0 &&\text{on } \partial \bb^n;
\end{aligned}
\right.
\end{equation}
and such that
\begin{subequations}\label{thm2:est}
\begin{align}
\|d\zeta\|_{W^{1,p}}+\|d(S/|S|)\|_{W^{1,p}}+\|d\ln|S|\|_{W^{1,p}}&\leq C(n,m)\|\Omega \|_{W^{1,p}}\label{thm2:est1}\\
\|d\zeta \|_{L^p}+\|d(S/|S|)\|_{L^p}+\|d\ln |S|\|_{L^p}&\leq C(n,m)\|\Omega \|_{L^p},\label{thm2:est2}\\
\|d\zeta \|_{L^n}+\|d(S/|S|)\|_{L^n}+\|d\ln |S|\|_{L^n}&\leq C(n,m)\|\Omega \|_{L^n}.\label{thm2:est3}
\end{align}
\end{subequations}
\end{theorem*}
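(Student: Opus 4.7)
The plan is to use the Lie-algebra direct-sum decomposition $TCO_+(m) = so(m) \oplus \R\cdot\id$ to split the problem into an antisymmetric (nonabelian) part, which is handled by Theorem~\ref{thm main}, and a scalar (abelian) part, which is linear and classical. Write
$$
\Omega = \Omega_A + \omega\cdot\id, \qquad \omega = \tfrac{1}{m}\mathrm{tr}(\Omega),
$$
so that $\Omega_A$ is antisymmetric and $\omega$ is a scalar $1$-form; pointwise bounds give $\|\Omega_A\|_X + \|\omega\|_X \leq C(m)\|\Omega\|_X$ in every norm that will appear, so both pieces inherit the smallness of $\Omega$. Correspondingly I look for $S = e^\tau P$ with $\tau\colon\bb^n\to\R$ and $P\colon\bb^n\to SO(m)$, and $\zeta = \xi + \sigma\cdot\id$ with $\xi$ antisymmetric and $\sigma$ scalar. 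Since $e^{\pm\tau}$ are scalars and commute with matrices, a one-line computation gives
$$
S^{-1}dS + S^{-1}\Omega S = (d\tau + \omega)\id + P^{-1}dP + P^{-1}\Omega_A P,
$$
and (\ref{eq:thm2}) decouples into the matrix equation $P^{-1}dP + P^{-1}\Omega_A P = \ast d\xi$ and the scalar equation $d\tau + \omega = \ast d\sigma$, with coclosedness and boundary conditions decoupling as well.

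For the antisymmetric part, after shrinking $\epsilon$ if necessary so that $\|\Omega_A\|_{L^n}$ is below the threshold in Theorem~\ref{thm main}, I apply that theorem to $\Omega_A$ to obtain $P\in W^{2,p}(\bb^n,SO(m))$ with $P|_{\partial\bb^n}=\id$ and $\xi\in W^{2,p}\cap W^{1,p}_0$ with $d\ast\xi=0$, satisfying all the estimates \eqref{thm main:est} with $\Omega$ replaced by $\Omega_A$. For the scalar part I solve the linear system
$$
d\tau + \omega = \ast d\sigma, \quad d\ast\sigma = 0, \quad \tau|_{\partial\bb^n} = 0, \quad \sigma|_{\partial\bb^n} = 0
$$
as follows: first let $\tau\in W^{2,p}(\bb^n)$ solve $\Delta\tau = -\ast d\ast\omega$ with zero Dirichlet data, so that $\omega + d\tau$ becomes coclosed; then, using that $\bb^n$ is a ball, produce via Hodge theory a coclosed $(n-2)$-form $\sigma$ with $\ast d\sigma = \omega + d\tau$ and $\sigma|_{\partial\bb^n}=0$ (equivalently, solve the Hodge Poisson equation $\Delta\sigma = \ast d\omega$ with vanishing tangential trace). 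Classical Calder\'on--Zygmund and Gaffney estimates then yield $\|d\tau\|_{W^{1,p}} + \|d\sigma\|_{W^{1,p}} \leq C\|\omega\|_{W^{1,p}}$ and analogous bounds in $L^p$ and $L^n$, each controlled by the corresponding norm of $\omega$.

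Finally, set $S = e^\tau P$ and $\zeta = \xi + \sigma\cdot\id$: then $|S|=e^\tau$, so $\ln|S|=\tau\in W^{2,p}$ and $S/|S|=P\in W^{2,p}(\bb^n,SO(m))$; (\ref{eq:thm2}), the coclosedness $d\ast\zeta=0$ and the boundary condition $\zeta|_{\partial\bb^n}=0$ all hold blockwise, and \eqref{thm2:est} follows by summing the antisymmetric estimates from Theorem~\ref{thm main} (for $P,\xi$ in terms of $\Omega_A$) with the linear Hodge estimates (for $\tau,\sigma$ in terms of $\omega$), everything finally being dominated by the corresponding norms of $\Omega$. The hard part will not be the nonabelian nonlinearity --- that is absorbed into Theorem~\ref{thm main} --- but the bookkeeping of the scalar Hodge step: producing $\sigma$ that satisfies $\ast d\sigma = \omega + d\tau$, $d\ast\sigma = 0$ and $\sigma|_{\partial\bb^n}=0$ simultaneously, with sharp linear estimates in all three norms. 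This is standard but requires Gaffney's inequality and the boundary-condition discussion already developed in Section~\ref{sec:forms}.
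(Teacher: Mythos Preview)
Your proof is correct and follows essentially the same route as the paper: decompose $\Omega$ into its antisymmetric part and a scalar multiple of $\id$, apply Theorem~\ref{thm main} to the antisymmetric part, and handle the scalar part by Hodge decomposition. The only cosmetic difference is that the paper phrases the scalar step as a direct Hodge decomposition $\frac{1}{m}\mathrm{Tr}(\Omega)=d\alpha+\ast d\beta$ and then sets $d\ln\lambda=-d\alpha$, whereas you first solve the Dirichlet problem for $\tau$ and then extract $\sigma$; these are equivalent formulations of the same linear step.
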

The integrability conditions on $\ln|S|$ should be understood as (rather weak) integrability conditions both on $S$ and $S^{-1}$.
We should also note that if $S$ satisfies the above theorem, so does $tS$ for any non-zero constant $t$.
\begin{proof}
We shall construct $S:\bb^n\to CO_+(m)$  satisfying the above conditions. Let us first fix some notation:

We shall write $S=\lambda P$, where $\lambda =|S|\in \R_+$ and $P=S/|S|\in SO(m)$, we also decompose $\Omega$ into its antisymmetric and diagonal part:
$$
\Omega=A+\frac{Tr(\Omega)}{m}\otimes\id
$$
with $A\in W^{1,p}(\bb^n, so(m)\otimes \Lambda^1\R^n)$.

Let $\Omega^S=S^{-1}dS+S^{-1}\Omega S$; likewise $\Omega^P=P^{-1}dP+P^{-1}\Omega P$ and $A^P=P^{-1}dP+P^{-1}A P$.

We have
\begin{equation*}
\begin{split}
\Omega^S&=\lambda^{-1}P^{-1}(d\lambda \otimes P+\lambda dP+P^{-1}\Omega P)\\
&=d\ln\lambda \otimes\id+\Omega^P.
\end{split}
\end{equation*}
Decomposing $\Omega $ we have
\begin{equation*}
\begin{split}
\Omega^P&=P^{-1}dP+P^{-1}\left(A+\frac{Tr(\Omega )}{m}\otimes\id\right) P \\
&=A^P+\frac{Tr(\Omega )}{m}\otimes\id,
\end{split}
\end{equation*}
thus
\begin{equation*}
\Omega^S=A^P+\left(d\ln\lambda +\frac{Tr(\Omega )}{m}\right)\otimes \id.
\end{equation*}
Clearly, $A$ satisfies all the assumptions on $\Omega$ in Theorem \ref{thm main}, we can thus find
$P\in W^{2,p}(\bb^n,SO(m))$ and $\xi \in W^{2,p}(\bb^n, so(m)\otimes \Lambda^{n-2}\R^n)$ such that
\begin{equation}
\left\{
\begin{aligned}
A^P=P^{-1}dP+P^{-1}A P&=\ast d\xi &&\text{on }\bb^n,\\
d\ast\xi&=0 &&\text{on }\bb^n,\\
\xi&=0 &&\text{on } \partial \bb^n;
\end{aligned}
\right.
\end{equation}
and such that
\begin{subequations}\label{subeq:thm2}
\begin{align}
\|d\xi\|_{W^{1,p}}+\|dP\|_{W^{1,p}}&\leq C(n,m)\|A \|_{W^{1,p}}\leq C(n,m)\|\Omega \|_{W^{1,p}}\\
\|d\xi \|_{L^p}+\|dP\|_{L^p}&\leq C(n,m)\|A \|_{L^p}\leq C(n,m)\|\Omega \|_{L^p},\\
\|d\xi \|_{L^n}+\|dP\|_{L^n}&\leq C(n,m)\|A \|_{L^n}\leq C(n,m)\|\Omega \|_{L^n}.
\end{align}
\end{subequations}
By Hodge decomposition we can find $\alpha \in W^{2,p}(\bb^n)$ and $\beta \in W^{2,p}(\bb^n, \Lambda^{n-2}\R^n)$ such that
$$
\frac{1}{m}Tr(\Omega )=d\alpha +\ast d\beta
$$
with $\beta|_{\partial \bb^n}=0$ and $\|d\alpha\|_W^{1,p}+\|d\beta\|_{W^{1,p}}\leq\|\Omega\|_{W^{1,p}}$.

This shows that if $\lambda $ is such that $d\ln \lambda =-d\alpha $, then $S=\lambda P$ and $\zeta =\xi+\beta\otimes\id$ satisfy \eqref{eq:thm2}. The estimates \eqref{thm2:est} follow immediately from the estimates on Hodge decomposition and from \eqref{subeq:thm2}.
\end{proof}

The above theorem is rather simple, but it provides a new interpretation to gradient-like terms $df\otimes \id$ in nonlinear systems -- we can incorporate them in antisymmetric expressions and perform Uhlenbeck's decomposition on the resulting $TCO_+$ matrix of differential forms instead of dealing with both kinds of terms separately.

\section*{Acknowledgements}
The Authors would like to thank the Institute of Mathematics of the Univeristy of Jyv\"askyl\"a for their hospitality and to Prof. Andreas Gastel for his very useful comments and remarks.


\end{document}